\documentclass[10pt,a4paper]{amsart}
\pdfoutput=1
\usepackage[sec,skipmbb]{rune}
\usepackage{runediag}

\DeclareSymbolFont{bbold}{U}{bbold}{m}{n}
\DeclareSymbolFontAlphabet{\mathbbold}{bbold}
\newcommand{\bbone}{\mathbbold{1}}

\usepackage{eucal}

\title{Algebras for Enriched $\infty$-Operads}
\author{Rune Haugseng}
\address{Norwegian University of Science and Technology (NTNU),
  Trondheim, Norway}

\newcommand{\Dint}{\simp_{\txt{int}}}
\newcommand{\Dintop}{\Dint^{\op}}
\newcommand{\DF}{\bbDelta_{\mathbb{F}}}

\newcommand{\DFop}{\DF^{\op}}

\newcommand{\DFX}{\bbDelta_{\mathbb{F},X}}
\newcommand{\DFXop}{\DFX^{\op}}
\newcommand{\xF}{\mathbb{F}}
\newcommand{\xFeq}{\xF^{\simeq}}

\newcommand{\Fact}{\txt{Fact}}

\DeclareMathOperator{\Tw}{Tw}

\newcommand{\opd}{\txt{opd}}

\newcommand{\COLL}{\txt{COLL}}
\newcommand{\Coll}{\txt{Coll}}
\setcounter{tocdepth}{1}

\newcommand{\RM}{\txt{RM}}

\newcommand{\drpullback}{\arrow[phantom]{dr}[very near start,description]{\lrcorner}}

\newcommand{\Algd}{\txt{Algd}}

\newcommand{\MAP}{\txt{MAP}}
\newcommand{\HOM}{\txt{HOM}}

\usepackage{graphicx}

\DeclareMathOperator{\ob}{ob}

\begin{document}

\begin{abstract}
  Using the description of enriched $\infty$-operads as associative
  algebras in symmetric sequences, we define algebras for enriched
  $\infty$-operads as certain modules in symmetric sequences. For
  $\mathbf{V}$ a symmetric monoidal model category and $\mathbf{O}$ a
  $\Sigma$-cofibrant operad in $\mathbf{V}$ for which the model
  structure on $\mathbf{V}$ can be lifted to one on
  $\mathbf{O}$-algebras, we then prove that strict algebras in
  $\mathbf{V}$ are equivalent to $\infty$-categorical algebras in the
  symmetric monoidal $\infty$-category associated to $\mathbf{V}$. We
  also show that for an $\infty$-operad $\mathcal{O}$ enriched in a
  suitable closed symmetric monoidal $\infty$-category $\mathcal{V}$,
  we can equivalently describe $\mathcal{O}$-algebras in $\mathcal{V}$
  as morphisms of \iopds{} from $\mathcal{O}$ to a self-enrichment of
  $\mathcal{V}$.
\end{abstract}

\maketitle

\tableofcontents

\section{Introduction}
If $\mathbf{V}$ is a symmetric monoidal category whose tensor product
is compatible with colimits, then (one-object\footnote{We focus on the
  one-object case for simplicity, but similar descriptions apply to
  ($\infty$-)operads with any fixed set (space) of objects.})
\emph{operads} enriched in $\mathbf{V}$ can be described as
associative algebras in $\Fun(\xF^{\simeq}, \mathbf{V})$, the category
of \emph{symmetric sequences} in $\mathbf{V}$. Here $\xF^{\simeq}$
denotes the groupoid $\coprod_{n} B\Sigma_{n}$ of finite sets and
bijections, and the monoidal structure on symmetric sequences is the
\emph{composition product}, which is a monoidal structure given by the
formula
\[(X \odot Y)(n) \cong \coprod_{k =0}^{\infty} \left(
    \coprod_{i_{1}+\cdots+i_{k} = n} (Y(i_{1}) \otimes \cdots \otimes
    Y(i_{k})) \times_{\Sigma_{i_{1}} \times \cdots \times
      \Sigma_{i_{k}}} \Sigma_{n} \right) \otimes_{\Sigma_{k}} X(k).\]
In a previous paper \cite{symmseq} we proved that (one-object) \iopds{} enriched in
a suitable symmetric monoidal \icat{} $\mathcal{V}$ admit a similar
description, as associative algebras in $\Fun(\xF^{\simeq},
\mathcal{V})$ using a monoidal structure given by the same formula.

Our goal in this short paper is to use this description of \iopds{} to
study \emph{algebras} for enriched \iopds{}. Classically, if $\mathbf{O}$ is a
(one-object) $\mathbf{V}$-operad, then an \emph{$\mathbf{O}$-algebra}
in $\mathbf{V}$ consists of an object $A \in \mathbf{V}$ and
$\Sigma_{n}$-equivariant morphisms
\[  A^{\otimes n} \otimes \mathbf{O}(n)  \to A\] compatible with the
composition and unit of $\mathbf{O}$. This data can be packaged in a
convenient way using the composition product: an $\mathbf{O}$-algebra
is the same thing as a right\footnote{This is correct under our
  convention for the ordering of the composition product, chosen to be
  compatible with our construction of the \icatl{} version; in most
  references on ordinary operads the reverse ordering is used, so that
  $\mathbf{O}$-algebras are certain \emph{left} $\mathbf{O}$-modules.}
$\mathbf{O}$-module $M$ in $\Fun(\xF^{\simeq}, \mathbf{V})$ that is
concentrated in degree zero, \ie{} $M(n)$ is the initial object $\emptyset$ for
$n \neq 0$. Indeed, such a right $\mathbf{O}$-module is given by a
morphism
\[ M \odot \mathbf{O} \to M,\] and expanding out the composition
product we see that (since $M(n)$ vanishes for $n \neq 0$) this is
precisely given by a map
\[ \coprod_{k} M(0)^{\otimes k} \otimes_{\Sigma_{k}} \mathbf{O}(k) \to M(0).\]

Here we take the corresponding modules in the \icatl{} setting (and
their analogues for many-object operads) as a \emph{definition} of
algebras for enriched \iopds{}. For a symmetric monoidal \icat{}
$\mathcal{V}$ (whose tensor product is compatible with colimits
indexed by \igpds{}) and a $\mathcal{V}$-enriched \iopd{}
$\mathcal{O}$, this results in an \icat{}
$\Alg_{\mathcal{O}}(\mathcal{V})$ with several pleasant properties,
including the expected formula for free $\mathcal{O}$-algebras, as we
will see in \S\ref{subsec:algmod} after reviewing the results of
\cite{symmseq} in \S\ref{sec:review}.

We then prove two main results about this \icatl{} notion of $\mathcal{O}$-algebras.
First, in \S\ref{subsec:comp} we prove a rectification result for algebras
over operads enriched in a symmetric monoidal model category:
\begin{thm}[See \cref{thm:opdalgrect}]
  Let $\mathbf{V}$ be a symmetric monoidal model category (with
  cofibrant unit) and $\mathbf{O}$ an $S$-coloured $\Sigma$-cofibrant
  $\mathbf{V}$-operad such that the category
  $\Alg_{\mathbf{O}}(\mathbf{V})$ admits a model structure whose weak
  equivalences and fibrations are detected by the forgetful functor to
  $\Fun(S, \mathbf{V})$. Then this model category describes the
  \icat{} of algebras for this operad in the \icatl{} localization of
  $\mathbf{V}$. That is, there is an
  equivalence of \icats{}
  \[ \Alg_{\mathbf{O}}(\mathbf{V})[W_{\mathbf{O}}^{-1}] \simeq
    \Alg_{\mathcal{O}}(\mathcal{V}) \]
  where on the left $W_{\mathbf{O}}$ denotes the collection of weak equivalences
  between $\mathbf{O}$-algebras, and on the right $\mathcal{V} :=
  \mathbf{V}[W^{-1}]$ is the localization of $\mathbf{V}$ at its weak
  equivalences and $\mathcal{O}$ denotes
  $\mathbf{O}$ viewed an \iopd{} enriched in $\mathcal{V}$ via the
  localization functor.
\end{thm}
The comparison applies, for instance, to \emph{all} $\Sigma$-cofibrant
operads in chain complexes over a field of characteristic zero or in
simplicial sets.\footnote{For more general model categories, including
  chain complexes in positive characteristic, there is typically only
  a \emph{semi}-model structure on algebras over a $\Sigma$-cofibrant
  operad; see \cref{rmk:semimodel} for more discussion of this case.}
We in fact prove a slightly more general result that avoids the
assumption that the unit is cofibrant, which applies to all
$\Sigma$-cofibrant operads in symmetric spectra. The proof boils down
to a combination of model-categorical results of Pavlov--Scholbach and
our formula for free algebras, using the same strategy as
\cite{HA}*{Theorems 4.1.4.4} and \cite{PavlovScholbachOpd}*{Theorem
  7.10} to prove that both sides are \icats{} of algebras for
equivalent monads.

Another classical description of algebras over (one-object)
$\mathbf{V}$-operads uses \emph{endomorphism operads}: For $v$ an
object of $\mathbf{V}$ there is an operad $\End_{\mathbf{V}}(v)$ with
$n$-ary operations given by the internal Hom
$\HOM_{\mathbf{V}}(v^{\otimes n}, v)$ (where the $\Sigma_{n}$-action
permutes the factors in $v^{\otimes n}$). If $\mathbf{O}$ is a
(one-object) $\mathbf{V}$-operad then we can describe
$\mathbf{O}$-algebras in $\mathbf{V}$ with underlying object $v$ as
morphisms of one-object operads
$\mathbf{O} \to \txt{End}_{\mathbf{V}}(v)$. More generally, we can
consider an $S$-coloured endomorphism operad $\End_{\mathbf{V}}(f)$
for any map of sets $S \to \ob \mathbf{V}$, where operations from
$(s_{1},\ldots,s_{n})$ to $s'$ are given by
$\HOM_{\mathbf{V}}(f(s_{1}) \otimes \cdots \otimes f(s_{n}), f(s'))$;
for an $S$-coloured operad $\mathbf{O}$, we can then identify
$\mathbf{O}$-algebras in $\mathbf{V}$ given by $f$ on objects with
morphisms of $S$-coloured operads from $\mathbf{O}$ to
$\End_{\mathbf{V}}(f)$.

In \S\ref{subsec:endo} we will use Lurie's construction of
endomorphism algebras \cite{HA}*{\S 4.7.1}, following work of Hinich
\cite{HinichYoneda} in the case of enriched \icats{}, to construct
endomorphism \iopds{} $\End_{\mathcal{V}}(f)$ for any map of \igpds{}
$f \colon X \to \mathcal{V}^{\simeq}$, where
 $\mathcal{V}$ is a
closed symmetric monoidal \icat{} compatible with colimits indexed by
small \igpds{}. Moreover, we show that these endomorphism \iopds{} can
be combined into a self-enrichment of $\mathcal{V}$, which gives our
second main result:
\begin{thm}[See \cref{thm:oVmapeq}]
  Let $\mathcal{V}$ be a closed symmetric monoidal \icat{} compatible
  with colimits indexed by small \igpds{}. Then there exists a 
  $\mathcal{V}$-\iopd{} $\overline{\mathcal{V}}$, whose object of
  multimorphisms from $(v_{1},\ldots,v_{n})$ to $w$ is the internal
  Hom $\MAP_{\mathcal{V}}(v_{1} \otimes \cdots \otimes v_{n}, w)$, 
  such that for a $\mathcal{V}$-\iopd{} $\mathcal{O}$
  there is a natural equivalence of \igpds{}
  \[ \{\text{$\mathcal{O}$-algebras in $\mathcal{V}$}\} \simeq \{\text{morphisms of $\mathcal{V}$-\iopds{}
     $\mathcal{O} \to \overline{\mathcal{V}}$}\}. \]
\end{thm}

\begin{warning}
  Throughout this paper we use the term ``$\mathcal{V}$-\iopd{}'' to
  refer to the \emph{algebraic} notion of an \iopd{}, given by objects
  of multimorphisms in $\mathcal{V}$ with homotopy-coherently
  associative and unital composition operations. Thus we have a class
  of fully faithful and essentially surjective morphisms between
  enriched \iopds{} that we would have to invert to get the
  ``correct'' \icat{} of $\mathcal{V}$-\iopds{}. In terms of the
  description of $\mathcal{V}$-\iopds{} we use, this means we are not
  requiring these to be ``complete'' (see \cite{enropd}*{\S 3}). In
  the terminology of \cite{AyalaFrancisFlagged}, our enriched \iopds{}
  can be thought of as being \emph{flagged} enriched \iopds{}, meaning
  a (complete) enriched \iopd{} equipped with an essentially
  surjective morphism of \igpds{} to its space of objects. Note that,
  as we will see in \cref{rmk:ffeseqonalgs}, the \icats{} of algebras
  we will study are in fact invariant under fully faithful and
  essentially surjective maps of enriched \iopds{}, so it does not
  really make a difference whether we use complete objects or not.
\end{warning}

\subsection{Related Work}
Much of our work here is not particularly reliant on the specific
construction of the composition product from \cite{symmseq}. An
alternative construction, using the description of symmetric sequences
in $\mathcal{V}$ as the free presentably symmetric monoidal \icat{} on
$\mathcal{V}$ and generalizing the approach to 1-categorical operads
due to Trimble \cite{TrimbleLie} and Carboni, has been worked out by
Brantner~\cite{BrantnerThesis}; however, this construction of \iopds{}
has not yet been compared to any of the other approaches.  In the
setting of dendroidal sets, Heuts describes algebras valued in spaces
and \icats{} in terms of dendroidal versions of left and cocartesian
fibrations in \cite{HeutsMTh}.

\subsection{Acknowledgments}
I thank Stefan Schwede and Irakli Patchkoria for helpful discussions
about model structures on spectra, and David White for help with
model structures on operad algebras. Much of this paper was
written while the author was employed by the IBS Center for Geometry
and Physics in a position funded by grant IBS-R003-D1 of the Institute
for Basic Science of the Republic of Korea.

\section{$\infty$-Operads as Algebras}\label{sec:review}
In this section we will review the main results on enriched \iopds{}
from \cite{symmseq}, 
where we showed that enriched \iopds{} can be viewed as associative
algebras in a double \icat{} of symmetric collections. For this the
relevant notion of enriched \iopds{} is an enriched variant of Barwick's
definition of \iopds{} \cite{BarwickOpCat} as presheaves on a
category $\DF$ satisfying Segal (and completeness) conditions; this
definition was first introduced in \cite{enropd}. We will now briefly
review this definition, as well as a slight generalization considered in
\cite{symmseq} that we will use to define modules over enriched
\iopds{}; we start by recalling the definition of Barwick's category $\DF$:

\begin{defn}
  Let $\mathbb{F}$ denote a skeleton of the category of finite sets,
  with objects $\mathbf{k} := \{1,\ldots,k\}$, $k = 0,1,\ldots$. We write
  $\DF$ for the category whose objects are pairs
  $([n], f \colon [n] \to \mathbb{F})$, with a morphism
  $([n], f) \to ([m], g)$ given by a morphism
  $\phi \colon [n] \to [m]$ in $\simp$ and a natural transformation
  $\eta \colon f \to g \circ \phi$ such that
  \begin{enumerate}[(i)]
  \item the map $\eta_{i} \colon f(i) \to g(\phi(i))$ is injective for
    all $i = 0,\ldots,m$,
  \item the commutative square
    \[\csquare{f(i)}{g(\phi(i))}{f(j)}{g(\phi(j))}{\eta_{i}}{}{}{\eta_{j}}\]
    is cartesian for all $0 \leq i \leq j \leq m$.
  \end{enumerate}
  Note that there is an obvious projection $\DFop \to \Dop$; this is a
  double \icat{}. There is also a functor $V \colon \DFop \to \xF_{*}$
  which takes $([n], f)$ to $(\coprod_{i=1}^{n} f(i))_{+}$; see
  \cite{enropd}*{Definition 2.2.11} for a complete definition.
  \end{defn}

\begin{remark}
  An object of $\DF$ is a sequence of maps of finite sets
  \[ \mathbf{a}_{0} \xto{f_{1}} \mathbf{a}_{1} \to \cdots \xto{f_{n}}
    \mathbf{a}_{n}, \]
  which we can think of as a forest of $|\mathbf{a}_{n}|$ oriented trees with
  $n$ levels: the edges are the elements of all the sets
  $\mathbf{a}_{i}$, and since each vertex has a unique outgoing edge
  we can also think of the elements of $\mathbf{a}_{i}$ for $i > 0$ as
  the vertices; the function $f_{i}$ assigns to each edge in level
  $i-1$ the vertex of which it is an incoming edge. Note that this
  means the functor
  $V$ takes each forest to its set of vertices (with a disjoint base
  point). The morphisms in $\DF$ are defined so that a vertex in the
  source is mapped to a subtree of the target with the same number of
  incoming edges.
\end{remark}

\begin{defn}
  For $X \in \mathcal{S}$, we write $\DFXop \to \DFop$ for the left
  fibration corresponding to the functor $\DFop \to \mathcal{S}$
  obtained as the right Kan extension of the functor $* \to
  \mathcal{S}$ with value $X$ along the inclusion $\{([0],
  \mathbf{1})\} \hookrightarrow \DFop$.
\end{defn}

\begin{remark}
  The fibre of $\DFXop \to \DFop$ at an object $F$ is equivalent to a
  product of copies of $X$ indexed by the number of edges in the forest
  $F$; we can thus think of an object of $\DFXop$ as a forest whose
  edges are labelled by points of $X$.
\end{remark}

\begin{notation}
  If $\mathcal{O}$ is a \nsiopd{}\footnote{We do not review the
    definition here, as it will not really play a role in this paper:
    the only examples we will encounter are $\Dop$ and the
    non-symmetric operad for right modules, which we describe
    explicitly in \cref{defn:RMopd}. We refer the reader to
    \cite{symmseq}*{\S 2.1} for a brief review, or \cite{enr}*{\S 2.2}
    for more motivation and discussion of the definition.}, we write
  $\mathcal{O}_{\xF} := \mathcal{O} \times_{\Dop} \DFop$ and
  $\mathcal{O}_{\xF,X} := \mathcal{O} \times_{\Dop} \DFXop$.
\end{notation}

\begin{defn}
  We say a morphism in $\DFop$ is \emph{operadic inert} if it lies
  over an inert morphism in $\Dop$. We then call a morphism in
  $\DFXop$ \emph{operadic inert} if it is a (necessarily cocartesian) morphism over
  an operadic inert morphism in $\DFop$. If $\mathcal{O}$ is a
  \nsiopd{}, we similarly say a morphism in $\mathcal{O}_{\xF,X}$ is
  \emph{operadic inert} if it maps to an
  inert morphism in $\mathcal{O}$ and an operadic inert morphism in
  $\DFop$. The functor $V \colon \DFop \to \xF_{*}$ takes operadic
  inert morphisms to inert morphisms, hence if $\mathcal{V}$ is a
  symmetric monoidal \icat{} we can define an
  \emph{operadic algebra} for $\mathcal{O}_{\xF,X}$ in $\mathcal{V}$ to be a commutative square
  \[
    \begin{tikzcd}
      \mathcal{O}_{\xF,X} \arrow{r}{A} \arrow{d} & \mathcal{V}^{\otimes}
  \arrow{d} \\
  \DFop \arrow{r}{V} & \xF_{*},
    \end{tikzcd}
  \]
  such that $A$ takes operadic inert morphisms to inert morphisms in
  $\mathcal{V}^{\otimes}$. We write
  $\Alg^{\opd}_{\mathcal{O}_{\xF,X}}(\mathcal{V})$ for the full
  subcategory of
  $\Fun_{/\xF_{*}}(\mathcal{O}_{\xF,X}, \mathcal{V}^{\otimes})$
  spanned by the operadic algebras. We also write
  $\Algd^{\opd}_{\mathcal{O}_{\xF}}(\mathcal{V}) \to \mathcal{S}$ for
  the cartesian fibration corresponding to the functor
  $X \mapsto \Alg^{\opd}_{\mathcal{O}_{\xF,X}}(\mathcal{V})$ and refer
  to its objects as \emph{operadic $\mathcal{O}_{\xF}$-algebroids} in
  $\mathcal{V}$.
\end{defn}

\begin{remark}
  The condition for a commutative triangle
  \[
    \begin{tikzcd}
      \DFXop \ar[rr, "F"] \ar[dr] & & \mathcal{V}^{\otimes} \ar[dl] \\
      & \xF_{*}
    \end{tikzcd}
  \]
  to be an operadic algebra is essentially that the value of $F$ at a
  forest whose edges are labelled by points of $X$ consists of a list
  of the values of $F$ at the corollas (one-vertex subtrees) of the
  forest. The latter should be thought of as the objects of
  multimorphisms in a $\mathcal{V}$-enriched \iopd{}, whose
  homotopy-coherent composition is encoded by the rest of the data in
  $F$. Indeed, operadic algebras for $\DFXop$ gives one of the notions
  of enriched \iopds{} introduced in \cite{enropd}, which justifies
  the following notation:
\end{remark}

\begin{notation}
  For a space $X$, we write
  $\Opd_{X}(\mathcal{V}):= \Alg^{\opd}_{\DFXop}(\mathcal{V})$. We also
  write $\Opd(\mathcal{V}) := \Algd^{\opd}_{\DFop}(\mathcal{V})$, so
  that we have a cartesian fibration
  $\Opd(\mathcal{V}) \to \mathcal{S}$ whose fibre at $X$ is
  $\Opd_{X}(\mathcal{V})$.
\end{notation}

\begin{notation}
  For $X,Y \in \mathcal{S}$, we write  $\xFeq_{X,Y}$ for the \igpd{}
  $\coprod_{n = 0 }^{\infty} X^{\times n}_{h\Sigma_{n}} \times Y$. For
  a functor $\Phi \colon \xFeq_{X,Y} \to \mathcal{V}$ we will denote
  its value at $((x_{1},\ldots,x_{n}), y)$ by
  $\Phi\binom{x_{1},\ldots,x_{n}}{y}$. We
  also abbreviate $\xFeq_{X} := \xFeq_{X,X}$ and write
  $\Coll_{X}(\mathcal{V}):= \Fun(\xFeq_{X}, \mathcal{V})$; we refer to
  the objects of this \icat{} as (symmetric) \emph{$X$-collections} in
  $\mathcal{V}$.
\end{notation}

To state the main result we will use from \cite{symmseq}, we need to recall
one further definition:
\begin{defn}
  A \emph{double \icat{}} is a cocartesian fibration $\mathcal{F} \to
  \Dop$ that corresponds to a functor $F \colon \Dop \to \CatI$ that
  satisfies the \emph{Segal condition}: for every $n$, the functor
  \[ F([n]) \to F([1]) \times_{F([0])} \cdots \times_{F([0])} F([1]),\]
  induced by the value of $F$ at the inert maps $[0],[1] \to [n]$ in
  $\simp$,
  is an equivalence. We say a double \icat{} is \emph{framed} if the
  functor $F([1]) \to F([0]) \times F([0])$, induced by the two face
  maps $[0] \to [1]$, is a cocartesian fibration. If $\mathcal{O}$ is
  a non-symmetric \iopd{}, then an \emph{$\mathcal{O}$-algebra} in
  $\mathcal{F}$ is a commutative triangle
  \[
    \begin{tikzcd}
     \mathcal{O} \ar[rr, "A"] \ar[dr] & &  \mathcal{F} \ar[dl] \\
      & \Dop 
    \end{tikzcd}
  \]
  such that $A$ preserves cocartesian morphisms over inert maps in $\Dop$.
\end{defn}

\begin{remark}
  If $\mathcal{F} \to \Dop$ is a double \icat{}, then we think of this
  as an \icatl{} version of a double category where
  \begin{itemize}
  \item the objects are the objects of $\mathcal{F}_{[0]}$,
  \item the vertical morphisms are the morphisms in
    $\mathcal{F}_{[0]}$,
  \item the horizontal morphisms are the objects of
    $\mathcal{F}_{[1]}$,
  \item the squares are the morphisms in $\mathcal{F}_{[1]}$.
  \end{itemize}
\end{remark}

\begin{thm}[See \cite{symmseq}*{Corollary 4.2.8}]\label{sseqthm}
  Suppose $\mathcal{V}$ is a symmetric monoidal \icat{} compatible
  with colimits indexed by small \igpds{}. Then there exists a framed double
  \icat{} $\COLL(\mathcal{V})$ such that:
  \begin{enumerate}[(i)]
  \item $\COLL(\mathcal{V})_{0} \simeq \mathcal{S}$, \ie{} the objects
    of $\COLL(\mathcal{V})$ are small \igpds{} and the vertical morphisms are
    morphisms thereof.
  \item A horizontal morphism from $X$ to $Y$ is a functor
    $\xFeq_{X,Y} \to \mathcal{V}$.
  \item If $\Phi$ is a horizontal morphism from $X$ to $Y$ and $\Psi$
    is one from $Y$ to $Z$ then their composite $\Phi \odot_{Y} \Psi$
    is given by the formula
    \[ \Phi \odot_{Y} \Psi \binom{x_{1},\ldots,x_{n}}{z} \simeq
      \colim_{\mathbf{n} \to \mathbf{m} \to \mathbf{1}} \colim_{(y_{i}) \in Y^{\times m}} \bigotimes_{i \in \mathbf{m}}
        \Phi\binom{x_{k}: k \in \mathbf{n}_{i}}{y_{i}} \otimes
        \Psi\binom{y_{1},\ldots,y_{k}}{z}.\]
    \item If $\mathcal{O}$ is any \nsiopd{} then there is a natural
      equivalence
      \[ \Alg_{\mathcal{O}}(\COLL(\mathcal{V})) \simeq
        \Algd^{\opd}_{\mathcal{O}_{\xF}}(\mathcal{V}).\]
    \item If $F \colon \mathcal{V} \to \mathcal{W}$ is a symmetric
      monoidal functor that preserves colimits indexed by small \igpds{},
      then composition with $F$ induces a morphism of double \icats{}
      $\COLL(\mathcal{V}) \to \COLL(\mathcal{W})$.
  \end{enumerate}
\end{thm}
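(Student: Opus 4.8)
The plan is to realize $\COLL(\mathcal{V})$ as a category object in $\Cat_{\infty}$, that is, a Segal object $\COLL(\mathcal{V})_{\bullet} \colon \Dop \to \Cat_{\infty}$, and then to read off properties (i)--(v) from the construction. The object of objects will be $\COLL(\mathcal{V})_{0} \simeq \mathcal{S}$, which gives part~(i); the key point is to arrange $\COLL(\mathcal{V})_{1}$ so that a horizontal morphism from $X$ to $Y$ is a functor $\xFeq_{X,Y} \to \mathcal{V}$, i.e. so that $\COLL(\mathcal{V})_{1}$ unstraightens the assignment $(X,Y) \mapsto \Fun(\xFeq_{X,Y}, \mathcal{V})$, with its two maps to $\mathcal{S} = \COLL(\mathcal{V})_{0}$ recording source and target. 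Since $\xFeq_{X,Y}$ is functorial in the space variables, this assembles over $\mathcal{S} \times \mathcal{S}$ and gives part~(ii); the framing of the double \icat{} (the fact, completing part~(i), that vertical morphisms are just maps of spaces) then comes from the companions built out of restriction of collections along maps $X \to X'$.

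The cleanest way to produce the whole simplicial object at once is to build $\COLL(\mathcal{V})_{\bullet}$ from the double \icat{} $\DFop \to \Dop$ together with the left fibrations $\DFXop$: for each $[n]$ I would take $\COLL(\mathcal{V})_{n}$ to be the total \icat{}, over the space $\mathcal{S}^{\times(n+1)}$ of tuples of objects, of the operadic-inert-preserving $\mathcal{V}$-valued diagrams on the level-$n$ fiber of $\DFXop$ relative to $V \colon \DFop \to \xF_{*}$. The simplicial structure is then induced by functoriality of $\DFXop$ in $[n] \in \Dop$, and the identifications $\COLL(\mathcal{V})_{0} \simeq \mathcal{S}$ and the description of $\COLL(\mathcal{V})_{1}$ above are immediate, securing (i) and (ii).

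Then comes the real work: the Segal condition $\COLL(\mathcal{V})_{n} \simeq \COLL(\mathcal{V})_{1} \times_{\mathcal{S}} \cdots \times_{\mathcal{S}} \COLL(\mathcal{V})_{1}$ together with the explicit composition formula of part~(iii). I would deduce the abstract Segal condition from the corresponding Segal condition for $\DFop$ over $\Dop$ (it is a double \icat{}), combined with the fact that the operadic-inert condition lets one reconstruct a level-$n$ diagram from its restrictions to the edges, glued along the object-spaces. The formula is then extracted by computing the left Kan extension that implements horizontal composition: the outer colimit over active maps $\mathbf{n} \to \mathbf{m} \to \mathbf{1}$ of finite sets records the combinatorics of composing operations encoded by $\DFop$, while the inner colimit over $Y^{\times m}$ implements the homotopy quotient summing over intermediate $Y$-colors. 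This is exactly where the hypothesis that $\otimes$ preserves \igpd{}-indexed colimits in each variable is indispensable, as it is what lets the tensor product $\bigotimes_{i} \Phi(\cdots) \otimes \Psi(\cdots)$ be pulled inside, and distributed over, these colimits. I expect this Kan-extension computation --- reconciling the abstract fibrational construction with the concrete formula --- to be the main obstacle, and the technical heart of \cite{symmseq}.

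Finally, parts (iv) and (v) should be comparatively formal. For (iv), an $\mathcal{O}$-algebra in $\COLL(\mathcal{V})$ is a map of category objects $\mathcal{O} \to \COLL(\mathcal{V})_{\bullet}$ over $\Dop$; unwinding the total-space description of $\COLL(\mathcal{V})_{\bullet}$ turns this, by a straightening (Fubini) identification of mapping \icats{}, into precisely an operadic-inert-preserving functor $\mathcal{O}_{\xF} = \mathcal{O} \times_{\Dop} \DFop \to \mathcal{V}^{\otimes}$ over $V$, i.e. an object of $\Algd^{\opd}_{\mathcal{O}_{\xF}}(\mathcal{V})$, with naturality automatic. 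For (v), a symmetric monoidal functor $F$ preserving \igpd{}-indexed colimits commutes with all the tensor products and left Kan extensions in the composition formula, so postcomposition with $F$ preserves operadic diagrams levelwise and is compatible with the Segal maps; hence it induces a morphism of category objects, i.e. of double \icats{}, $\COLL(\mathcal{V}) \to \COLL(\mathcal{W})$.
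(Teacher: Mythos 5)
Your proposal should first be set against what the paper actually does: \cref{sseqthm} is not proved in this paper at all. It is quoted wholesale from \cite{symmseq}*{Corollary 4.2.8}, and it compresses the main construction of that entire companion paper into a single statement. So the only meaningful comparison is between your sketch and the construction carried out in \cite{symmseq}. At the level of architecture your plan does track that construction: $\COLL(\mathcal{V})$ is built as a category object in $\CatI$ whose $n$-th level consists of tuples of spaces together with inert-preserving $\mathcal{V}^{\otimes}$-valued functors on the corresponding fibers of $\DFop$ relative to $V \colon \DFop \to \xF_{*}$, part (iv) is then close to being true by construction, and part (v) comes from postcomposition. In that sense you have correctly reverse-engineered the intended skeleton.

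The genuine gap is that the proposal defers exactly the steps that constitute the mathematical content of the theorem, and in one place misstates their status. You claim the Segal condition follows from ``the corresponding Segal condition for $\DFop$ over $\Dop$, combined with the fact that the operadic-inert condition lets one reconstruct a level-$n$ diagram from its restrictions to the edges.'' That reconstruction claim is not an input one can combine with anything; it \emph{is} the Segal condition, and proving it requires showing that restriction along the Segal core induces an equivalence of $\infty$-categories of inert-preserving functors. This is established in \cite{symmseq} by operadic left Kan extension and cofinality arguments --- the same machinery that produces the colimit formula in (iii), which you explicitly set aside as ``the main obstacle.'' Likewise, the framing (existence of companions and conjoints, equivalently the statement that evaluation gives a cartesian fibration, as in \cref{wcopdfib}) is asserted in a single clause but is a theorem in its own right, and the ``formal'' identification in (iv) for a general generalized non-symmetric $\infty$-operad $\mathcal{O}$ rests on these same decomposition results rather than on straightening and Fubini alone. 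So: correct blueprint, honest flagging of where the difficulty lies, but none of (iii), the Segal condition, the framedness, or (iv) in full generality is actually proved; what is missing is, in effect, the body of \cite{symmseq}.
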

\begin{remark}
  In (iii), the outer colimit is more precisely over the groupoid
  $\Fact(\mathbf{n}\to \mathbf{1})$ of factorizations
  $\mathbf{n}\to \mathbf{m} \to \mathbf{1}$, with morphisms given by
  diagrams
  \[
    \begin{tikzcd}
      {} & \mathbf{m} \arrow{dd}{\sim} \arrow{dr} \\
      \mathbf{n} \arrow{ur} \arrow{dr} & & \mathbf{1}. \\
       & \mathbf{m}' \arrow{ur}
    \end{tikzcd}
  \]
\end{remark}

\begin{remark}\label{rmk:AlgCOLLiopd}
  In particular, associative algebras in $\COLL(\mathcal{V})$ are
  equivalent to \iopds{} enriched in $\mathcal{V}$:
  \[ \Alg_{\Dop}(\COLL(\mathcal{V})) \simeq 
    \Algd^{\opd}_{\DFop}(\mathcal{V}) \simeq \Opd(\mathcal{V}).\]
\end{remark}

For $X \in \mathcal{V}$, the \icat{}
\[\COLL(\mathcal{V})(X,X) \simeq
  \Coll_{X}(\mathcal{V}) \simeq \Fun(\xFeq_{X}, \mathcal{V})\] of
horizontal endomorphisms of $X$ has a monoidal structure given by
composition.  Moreover, by \cite{symmseq}*{Proposition 3.4.8} a
morphism $f \colon X \to Y$ induces a natural lax monoidal functor
$f^{*}\colon \Coll_{Y}(\mathcal{V}) \to \Coll_{X}(\mathcal{V})$, given
by composition with the induced map $\xFeq_{X} \to \xFeq_{Y}$.
By \cite{symmseq}*{Corollary 3.4.10} we also have:
\begin{cor}\label{wcopdfib}
  Let $\mathcal{O}$ be a weakly contractible \nsiopd{}. Then the
  functor \[\Alg_{\mathcal{O}}(\COLL(\mathcal{V})) \to \mathcal{S}\]
  given by evaluation at $* \in \mathcal{O}_{0}$ is a cartesian
  fibration corresponding to the functor $\mathcal{S} \to \CatI$ that
  takes $X$ to $\Alg_{\mathcal{O}}(\Coll_{X}(\mathcal{V}))$ and a
  morphism $f \colon X \to Y$ to the functor given by composition with
  the lax monoidal functor
  $f^{*}\colon \Coll_{Y}(\mathcal{V}) \to \Coll_{X}(\mathcal{V})$.
\end{cor}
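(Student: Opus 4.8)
The plan is to deduce the corollary from the equivalence $\Alg_{\mathcal{O}}(\COLL(\mathcal{V})) \simeq \Algd^{\opd}_{\mathcal{O}_{\xF}}(\mathcal{V})$ of \cref{sseqthm}(iv), since the right-hand side was \emph{defined} as the cartesian fibration over $\mathcal{S}$ classifying $X \mapsto \Alg^{\opd}_{\mathcal{O}_{\xF,X}}(\mathcal{V})$. Thus it suffices to show that this equivalence is compatible with the projections to $\mathcal{S}$, and then to read off the fibres and the transition functors. Concretely I would carry out three steps: (1) identify the structural projection $\Algd^{\opd}_{\mathcal{O}_{\xF}}(\mathcal{V}) \to \mathcal{S}$ with evaluation at $* \in \mathcal{O}_{0}$; (2) identify each fibre $\Alg^{\opd}_{\mathcal{O}_{\xF,X}}(\mathcal{V})$ with $\Alg_{\mathcal{O}}(\Coll_{X}(\mathcal{V}))$; and (3) identify the transition functor over $f \colon X \to Y$ with the one induced by $f^{*}$.

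The role of weak contractibility enters in step (1). An $\mathcal{O}$-algebra in $\COLL(\mathcal{V})$ assigns to each object of $\mathcal{O}$ an object of $\COLL(\mathcal{V})$, \ie{} a space, and hence determines a functor out of the space $\mathcal{O}_{0}$ of objects of $\mathcal{O}$; since $\mathcal{O}$ is weakly contractible, $\mathcal{O}_{0}$ is weakly contractible and so $\Fun(\mathcal{O}_{0}, \mathcal{S}) \simeq \mathcal{S}$, meaning this underlying data is a single space $X$ recovered by evaluation at the basepoint $* \in \mathcal{O}_{0}$. On the other side, an operadic $\mathcal{O}_{\xF}$-algebroid restricts over $([0], \mathbf{1}) \in \DFop$ to a point of the corresponding space, and tracing the construction of the equivalence in \cref{sseqthm}(iv) should show that these two projections to $\mathcal{S}$ agree.

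For step (2) I would use the ``pointed'' analogue of \cref{sseqthm}(iv): restricting $\COLL(\mathcal{V})$ to its single object $X$ returns the monoidal \icat{} $\COLL(\mathcal{V})(X,X) \simeq \Coll_{X}(\mathcal{V})$ with its monoidal structure given by composition (\cref{sseqthm}(iii)), and $\mathcal{O}$-algebras in this monoidal \icat{} are exactly operadic algebras for $\mathcal{O}_{\xF,X}$, \ie{} $\Alg^{\opd}_{\mathcal{O}_{\xF,X}}(\mathcal{V}) \simeq \Alg_{\mathcal{O}}(\Coll_{X}(\mathcal{V}))$. Since the equivalence of step (1) is over $\mathcal{S}$, it then restricts to this equivalence on fibres.

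Finally, for step (3), the transition functor over $f \colon X \to Y$ is by construction restriction along the map $\mathcal{O}_{\xF,X} \to \mathcal{O}_{\xF,Y}$ induced by $\DFXop \to \bbDelta_{\mathbb{F},Y}^{\op}$, equivalently along $\xFeq_{X} \to \xFeq_{Y}$; under the identification of step (2) this should match composition with the lax monoidal functor $f^{*} \colon \Coll_{Y}(\mathcal{V}) \to \Coll_{X}(\mathcal{V})$ of \cite{symmseq}*{Proposition 3.4.8}, which is defined by composing with the same map $\xFeq_{X} \to \xFeq_{Y}$. I expect this last matching to be the main obstacle: one must verify that the abstract cartesian pushforward of the $\Algd$-fibration coincides with ``compose with $f^{*}$'' as functors on algebras, which requires checking that the restriction functor is lax monoidal for the composition products of \cref{sseqthm}(iii) and that this is precisely the lax monoidal structure realizing $f^{*}$ through the framed structure of $\COLL(\mathcal{V})$. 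Granting this compatibility, $f^{*}$ is lax monoidal and hence carries $\mathcal{O}$-algebras to $\mathcal{O}$-algebras, and the three steps assemble to identify $\Alg_{\mathcal{O}}(\COLL(\mathcal{V})) \to \mathcal{S}$ with the asserted cartesian fibration.
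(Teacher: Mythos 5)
The paper gives no argument for this corollary at all: it is imported verbatim from \cite{symmseq}*{Corollary 3.4.10}, where it is proved for an arbitrary framed double \icat{}, with the cartesian lifts constructed from the framing via the lax monoidal functors $f^{*}$ of \cite{symmseq}*{Proposition 3.4.8}. Your plan --- bootstrapping from \cref{sseqthm}(iv) together with the definition of $\Algd^{\opd}_{\mathcal{O}_{\xF}}(\mathcal{V})$ as a cartesian fibration --- is a sensible alternative skeleton, and your steps (1)--(3) correctly isolate what would need checking. But observe that each step is discharged only by ``tracing the construction'' in \cite{symmseq}: the quoted statement of \cref{sseqthm}(iv) is an equivalence of total \icats{} with no stated compatibility with the two projections to $\mathcal{S}$; the fibrewise monoidal identification $\Alg^{\opd}_{\mathcal{O}_{\xF,X}}(\mathcal{V}) \simeq \Alg_{\mathcal{O}}(\Coll_{X}(\mathcal{V}))$ is not among its clauses (within this paper, even the special case $\Opd_{X}(\mathcal{V}) \simeq \Alg_{\Dop}(\Coll_{X}(\mathcal{V}))$ is recorded in a remark \emph{after}, and as a consequence of, the very corollary you are proving); and matching the cartesian transport with composition along $f^{*}$ is exactly the content of \cite{symmseq}*{Corollary 3.4.10}. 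So as written your proof is a chain of promissory notes pointing into the same external source the paper cites, rather than an independent derivation.

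The genuine error is your handling of the weak-contractibility hypothesis. The hypothesis concerns the total \icat{} $\mathcal{O}$, not the fibre $\mathcal{O}_{0}$ over $[0]$: this is why the paper goes to some trouble immediately afterwards (coinitiality of $\Dintop \to \RM$) to prove that the \emph{category} $\RM$ is weakly contractible before invoking the corollary. Your inference ``$\mathcal{O}$ weakly contractible, hence $\mathcal{O}_{0}$ weakly contractible'' is false in general (fibres of a functor out of a weakly contractible \icat{} need not be weakly contractible), and it is also beside the point: for a \nsiopd{} the fibre $\mathcal{O}_{0}$ is contractible by definition, so your step (1) uses the hypothesis vacuously. (Moreover, weak contractibility of a category $\mathcal{C}$ does not give $\Fun(\mathcal{C},\mathcal{S}) \simeq \mathcal{S}$; already $\Fun(\Delta^{1},\mathcal{S}) \not\simeq \mathcal{S}$.) The symptom is that if your three steps went through as described, the corollary would hold for \emph{every} \nsiopd{}, rendering both the hypothesis and the subsequent contractibility proposition for $\RM$ pointless --- a clear sign that the hypothesis's real role has been missed. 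In the cited result it does real work: the framing produces vertical data from \emph{all} morphisms of $\mathcal{O}$, not just the inert ones, so the underlying-objects part of an algebra is a priori a diagram indexed by (a localization of) the whole of $\mathcal{O}$, and weak contractibility is what collapses this diagram to a single object of $\mathcal{S}$, so that evaluation at the single point $*$ already records all the vertical data and admits the asserted cartesian lifts. A correct write-up must locate and actually use the hypothesis at that step.
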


\begin{remark}
  This corollary applies in particular to the weakly contractible
  non-symmetric \iopd{} $\Dop$, so that by \cref{rmk:AlgCOLLiopd},
  enriched \iopds{} with $X$ as space of objects are given by
  associative algebras in $\Coll_{X}(\mathcal{V})$, \ie{}
  \[ \Opd_{X}(\mathcal{V}) \simeq \Alg_{\Dop}(\Coll_{X}(\mathcal{V})).\]
\end{remark}

\begin{remark}
  For $f \colon X \to Y$, the lax monoidal functor
  $f^{*}\colon \Coll_{Y}(\mathcal{V}) \to \Coll_{X}(\mathcal{V})$ is
  given by composition with a morphism of \igpds{}
  $f_{\xFeq} \colon \xFeq_{X} \to \xFeq_{Y}$. Since $\mathcal{V}$ has
  colimits indexed by \igpds{}, this functor has a left adjoint
  $f_{!}$, given by left Kan extension along $f_{\xFeq}$. Moreover,
  since
  $\Alg_{\mathcal{O}}(\Coll_{X}(\mathcal{V})) \to
  \Coll_{X}(\mathcal{V})$ detects limits and sifted colimits for any
  \nsiopd{} $\mathcal{O}$, the functor
  $f^{*} \colon \Alg_{\mathcal{O}}(\Coll_{Y}(\mathcal{V})) \to
  \Alg_{\mathcal{O}}(\Coll_{X}(\mathcal{V}))$ preserves limits and
  sifted colimits, since this is true for
  $f^{*}\colon \Coll_{Y}(\mathcal{V}) \to \Coll_{X}(\mathcal{V})$. If
  $\mathcal{V}$ is presentably symmetric monoidal, then the
  \icat{} $\Alg_{\mathcal{O}}(\Coll_{Y}(\mathcal{V}))$ is presentable,
  since it is equivalent to
  $\Alg_{\mathcal{O}_{\xF,X}}^{\opd}(\mathcal{V})$, which in turn is
  equivalent to the \icat{} of algebras in $\mathcal{V}$ for some
  symmetric \iopd{}. It then follows from the adjoint functor theorem
  that
  $f^{*} \colon \Alg_{\mathcal{O}}(\Coll_{Y}(\mathcal{V})) \to
  \Alg_{\mathcal{O}}(\Coll_{X}(\mathcal{V}))$ has a left adjoint. This implies:
\end{remark}

\begin{cor}\label{opdcocart}
  Let $\mathcal{O}$ be a weakly contractible \nsiopd{} and
  $\mathcal{V}$ a presentably symmetric monoidal \icat{}. Then the
  functor \[\Alg_{\mathcal{O}}(\COLL(\mathcal{V})) \to \mathcal{S}\]
  given by evaluation at $* \in \mathcal{O}_{0}$ is also a cocartesian
  fibration. \qed
\end{cor}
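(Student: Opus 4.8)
The plan is to deduce the statement from \cref{wcopdfib} together with the general principle that a cartesian fibration is simultaneously a cocartesian fibration exactly when each of its classifying restriction functors admits a left adjoint. Concretely, \cref{wcopdfib} identifies the evaluation functor $\Alg_{\mathcal{O}}(\COLL(\mathcal{V})) \to \mathcal{S}$ as the cartesian fibration classified by the functor $\mathcal{S}^{\op} \to \CatI$ sending $X$ to $\Alg_{\mathcal{O}}(\Coll_{X}(\mathcal{V}))$ and a morphism $f \colon X \to Y$ to the restriction functor $f^{*}$. The task thus reduces to producing, for every such $f$, a left adjoint to $f^{*}$ at the level of $\mathcal{O}$-algebras.

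This left adjoint has already been supplied by the preceding remark: under the presentability hypothesis each $\Alg_{\mathcal{O}}(\Coll_{X}(\mathcal{V}))$ is presentable---via its identification with $\Alg^{\opd}_{\mathcal{O}_{\xF,X}}(\mathcal{V})$ and thence with algebras in $\mathcal{V}$ for a symmetric \iopd{}---while $f^{*}$ preserves limits and sifted colimits, since the forgetful functor to $\Coll_{X}(\mathcal{V})$ detects these and the corresponding statement holds for $f^{*}$ on collections. The adjoint functor theorem then yields the required left adjoint.

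It remains only to invoke the standard criterion (\cite{HTT}*{Corollary 5.2.2.5}): a cartesian fibration $p \colon \mathcal{E} \to \mathcal{B}$ is also cocartesian precisely when, for every morphism $f$ in $\mathcal{B}$, the associated restriction functor admits a left adjoint, in which case the cocartesian transport along $f$ is computed by that adjoint. Applying this with $\mathcal{B} = \mathcal{S}$ and the functors $f^{*}$ above gives the claim. Since the genuine content---the existence of the left adjoints, resting on presentability and on preservation of limits and sifted colimits---is already discharged in the remark, no real obstacle remains; the corollary is a formal consequence of \cref{wcopdfib} and the cited adjoint criterion, and the only point requiring any care is that the pointwise left adjoints automatically assemble into cocartesian transport, which is exactly what the criterion guarantees.
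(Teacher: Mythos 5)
Your proposal is correct and follows essentially the same route as the paper: the paper's preceding remark establishes exactly the left adjoints to $f^{*}$ on $\Alg_{\mathcal{O}}(\Coll_{X}(\mathcal{V}))$ via presentability, preservation of limits and sifted colimits, and the adjoint functor theorem, and then deduces the corollary by the same standard criterion that a cartesian fibration with left-adjointable transport functors is cocartesian. Your only addition is making the citation to \cite{HTT}*{Corollary 5.2.2.5} explicit, which the paper leaves implicit in its ``This implies'' step.
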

In general the cocartesian morphisms over $f$ are not easily described
in terms of the left Kan extension along the map $f_{\xFeq}\colon
\xFeq_{X}\to \xFeq_{Y}$. However, we can derive a simple description
in the case of monomorphisms of \igpds{}:
\begin{propn}
  Suppose $i \colon X \hookrightarrow Y$ is a monomorphism of \igpds{}.
  Then the left adjoint $i_{!} \colon \Coll_{X}(\mathcal{V}) \to
  \Coll_{Y}(\mathcal{V})$ has a canonical monoidal structure, such
  that composition with $i_{!}$ and $i^{*}$ gives for any \iopd{}
  $\mathcal{O}$ an adjunction
  \[ i_{!} : \Alg_{\mathcal{O}}(\Coll_{X}(\mathcal{V})) \rightleftarrows
    \Alg_{\mathcal{O}}(\Coll_{Y}(\mathcal{V})) : i^{*}. \]
\end{propn}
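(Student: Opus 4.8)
The plan is to realise both functors inside the framed double \icat{} $\COLL(\mathcal{V})$ of \cref{sseqthm} and to read off the monoidal structure from the framing. Every vertical morphism, in particular $i \colon X \to Y$, has a companion $c \colon X \to Y$ and a conjoint $c' \colon Y \to X$, which are horizontal morphisms (\ie{} an $X$-to-$Y$ and a $Y$-to-$X$ collection) governed by the usual framing relations. Unwinding the definitions, $i^{*}$ and $i_{!}$ are the conjugation functors $i^{*}\Phi \simeq c \odot \Phi \odot c'$ and $i_{!}\Psi \simeq c' \odot \Psi \odot c$ on horizontal endomorphisms; indeed, computing $i_{!}\Psi$ from the formula in \cref{sseqthm} recovers the left Kan extension along $\xFeq_{X} \to \xFeq_{Y}$. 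My first reduction is therefore to show that the single comparison cell $U_{X} \to c \odot c'$ is an equivalence; everything else will follow formally from this.

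I would prove this cell is an equivalence exactly when $i$ is a monomorphism. Evaluating $c \odot c'$ in arity one and collapsing the defining colimit by the co-Yoneda lemma identifies $(c \odot c')\binom{x}{x'}$ with the copower of the unit by $\mathrm{Map}_{Y}(ix, ix')$, while $U_{X}\binom{x}{x'}$ is the copower by $\mathrm{Map}_{X}(x, x')$; the comparison is thus an equivalence precisely when $i$ induces equivalences on mapping spaces, \ie{} when $i$ is fully faithful, which for \igpds{} is the same as being a monomorphism. Equivalently --- and this is the picture I would keep in mind --- a monomorphism realises $X$ as a union of components of $Y$, so $i_{!}$ is simply extension by the initial object along the induced monomorphism $\xFeq_{X} \hookrightarrow \xFeq_{Y}$.

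The monoidal structure on $i_{!}$ is now immediate. Writing out $i_{!}\Phi \odot_{Y} i_{!}\Psi \simeq c' \odot \Phi \odot (c \odot c') \odot \Psi \odot c$ and using $c \odot c' \simeq U_{X}$ to delete the middle factor identifies this with $c' \odot (\Phi \odot_{X} \Psi) \odot c \simeq i_{!}(\Phi \odot_{X}\Psi)$, naturally in $\Phi, \Psi$; carrying this out inside the double \icat{} supplies the higher coherences automatically, giving the claimed monoidal structure. The same equivalence dually shows $i^{*}$ carries the monoidal unit to the unit, $U_{X} \simeq i^{*}U_{Y}$, so that $i^{*}$ is (lax) monoidal and induces the restriction functor $i^{*} \colon \Alg_{\mathcal{O}}(\Coll_{Y}(\mathcal{V})) \to \Alg_{\mathcal{O}}(\Coll_{X}(\mathcal{V}))$ for every \iopd{} $\mathcal{O}$. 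Composition with $i_{!}$ then yields a candidate left adjoint, and I would verify the adjunction by transporting the unit and counit of $i_{!} \dashv i^{*}$ through the forgetful functors, which detect limits and sifted colimits as recorded in the preceding remark.

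The step I expect to be the main obstacle is \emph{unitality}. The comparison $i_{!}U_{X} \to U_{Y}$ is only the inclusion of the identity operations indexed by the components $X \subseteq Y$, and it is an equivalence merely when $i$ is in addition essentially surjective; thus $i_{!}$ is strong monoidal for the composition products but does not preserve the monoidal unit, and one cannot invoke the slogan ``a strong monoidal functor induces a functor on algebras'' directly. The monomorphism hypothesis is precisely what makes the repair canonical: the extra components $Y \setminus X$ lie outside the image of $\xFeq_{X} \hookrightarrow \xFeq_{Y}$, so the only data $i_{!}$ fails to record there are their identities, which can be adjoined uniquely. Concretely I would either stay inside the framed double \icat{}, where identities are encoded by the companions and conjoints and the equivalence $c \odot c' \simeq U_{X}$ transports the entire algebra structure, or identify the abstractly existing left adjoint from the preceding remark with this unitalisation of $i_{!}$ and check the universal property directly. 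Ensuring that this unit correction is coherent for an arbitrary \iopd{} $\mathcal{O}$, rather than only for the associative operad $\Dop$, is where I expect the bookkeeping to be most delicate.
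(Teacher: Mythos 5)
Your handling of the binary compatibility is correct and is essentially the paper's own computation in equipment language. The paper reduces, via \cite{HA}*{Corollary 7.3.2.12}, to showing that the canonical map $i_{!}(\Phi \odot_{X} \Psi) \to i_{!}\Phi \odot_{Y} i_{!}\Psi$ is an equivalence, and proves this by expanding both sides using the colimit formula of \cref{sseqthm}: the comparison map is induced by a diagonal of \igpds{}, which is cofinal precisely when the fibres of $i$ are empty or contractible, \ie{} precisely when $i$ is a monomorphism. Your equivalence $U_{X} \isoto c \odot c'$ is the same ``monomorphism equals fully faithful'' input, and cancelling the middle factor is the same cancellation done coordinate-free; if you take this route you must additionally justify that $\COLL(\mathcal{V})$ has companions and conjoints computing the paper's $i^{*}$ and $i_{!}$ by conjugation, which is a legitimate use of the framing asserted in \cref{sseqthm}.

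The more important point is that the ``main obstacle'' in your last paragraph is not a defect of your route: it is a genuine gap in the paper's own proof, and in fact in the statement of the proposition. The unit of $\Coll_{Y}(\mathcal{V})$ is the collection concentrated in arity one whose values are copowers of $\bbone$ by the mapping spaces of $Y$, and, exactly as you say, $i_{!}U_{X} \to U_{Y}$ is an equivalence only when $i$ is also essentially surjective. Since any monoidal functor preserves units, the functor $i_{!}$ admits \emph{no} unital monoidal structure when $i$ is not essentially surjective, so the paper's appeal to \cite{HA}*{Corollary 7.3.2.12} (whose Beck--Chevalley hypotheses include the nullary comparison, not just the binary one the paper verifies) cannot go through. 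The simplest failure is $X = \emptyset \hookrightarrow Y = *$ with $\mathcal{V} = \mathcal{S}$: there $i_{!}$ picks out the symmetric sequence that is empty in every arity, which underlies no associative algebra at all, whereas the left adjoint to $i^{*}$ on $\Alg_{\Dop}$ must pick out the trivial one-object operad. The correct description of the left adjoint on algebras is the unitalisation you propose, $A \mapsto i_{!}A$ with the identities of the objects outside $X$ freely adjoined (in the associative case the pushout $i_{!}A \amalg_{i_{!}U_{X}} U_{Y}$); note that the consequences the paper actually uses later, namely \cref{cor:i!ff} and through it the identification of $i_{\lambda}^{*}\overline{\mathcal{V}}$, survive this correction, because the added identities restrict to nothing and so $A \isoto i^{*}i_{!}A$ still holds.

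That said, your proposal is also not yet a proof: the unitalisation is precisely the step you leave open, and it is the entire content of the corrected statement. It needs to be constructed with its universal property for an arbitrary \iopd{} $\mathcal{O}$, not just $\Dop$, and you cannot shortcut this by invoking the abstract left adjoint of \cref{opdcocart}, since that argument requires $\mathcal{V}$ to be presentably symmetric monoidal while the proposition assumes only compatibility with \igpd{}-indexed colimits. So: your binary argument matches the paper's; your unit objection identifies a real error in the paper; and the repair you sketch is the right one but still has to be carried out.
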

\begin{proof}
  We will prove this by applying \cite{HA}*{Corollary 7.3.2.12}, which
  requires us to show that for $\Phi, \Psi \in
  \Coll_{X}(\mathcal{V})$, the canonical map
  \[ i_{!}(\Phi \odot_{X} \Psi) \to i_{!}\Phi \odot_{Y} i_{!}\Psi\]
  is an equivalence.

  We first describe $i_{!}\Phi$ more explicitly: For
  $(y_{1},\ldots,y_{n})$ in $Y^{n}_{h \Sigma_{n}}$, we can
  identify the fibre of $X^{n}_{h \Sigma_{n}}$ over this point as
  $X_{y_{1}} \times \cdots \times X_{y_{n}}$ using
  the commutative diagram
  \[
    \begin{tikzcd}
      X_{y_{1}} \times \cdots \times X_{y_{n}} \arrow{r} \arrow{d}
      \drpullback &
      X^{n} \arrow{r} \arrow{d} \drpullback & 
      X^{n}_{h \Sigma_{n}} \arrow{d} \\
      \{(y_{1},\ldots,y_{n}) \}  \arrow{r} & Y^{n} \arrow{r} \arrow{d} \drpullback
      & Y^{n}_{h \Sigma_{n} } \arrow{d} \\
       & * \arrow{r} & B \Sigma_{n},
    \end{tikzcd}
  \]
  where all three squares are cartesian. Hence the fibre of $\xFeq_{X}
  \to \xFeq_{Y}$ at $\tbinom{y_{1},\ldots,y_{n}}{y}$ is equivalent to
  $\prod_{i} X_{y_{i}} \times X_{y}$,
  giving
  \[ i_{!}\Phi \tbinom{y_{1},\ldots,y_{n}}{y} \simeq
    \colim_{(x_{1},\ldots,x_{n},x) \in \prod_{i}X_{y_{i}}
      \times X_{y}} \Phi\tbinom{x_{1},\ldots,x_{n}}{x}.\]
  We can then rewrite the formula for $i_{!}(\Phi \odot_{X}
  \Psi)\tbinom{y_{1},\ldots,y_{n}}{y}$ as
  \begin{multline*}
    i_{!}(\Phi \odot_{X}\Psi)\tbinom{y_{1},\ldots,y_{n}}{y}
     \simeq \colim_{(x_{1},\ldots,x_{n},x) \in \prod_{i}X_{y_{i}}
      \times X_{y}} (\Phi \odot_{X}\Psi)\tbinom{x_{1},\ldots,x_{n}}{x}
    \\
     \simeq \colim_{(x_{1},\ldots,x_{n},x) \in \prod_{i}X_{y_{i}}
      \times X_{y}} \colim_{\mathbf{n} \to \mathbf{m} \to \mathbf{1}}
      \colim_{(x'_{j}) \in X^{m}} \bigotimes_{j} \Phi\tbinom{x_{i} : i
      \in \mathbf{n}_{j}}{x'_{j}} \otimes
      \Psi\tbinom{x'_{1},\ldots,x'_{m}}{x} \\
     \simeq \colim_{\mathbf{n} \to \mathbf{m} \to \mathbf{1}}
      \colim_{(y'_{j}) \in Y^{m}} \colim_{(x_{i}) \in \prod_{i}
      X_{y_{i}}} \colim_{(x'_{j}) \in \prod_{j} X_{y'_{j}}} \colim_{x
      \in X_{y}} \bigotimes_{j} \Phi\tbinom{x_{i} : i
      \in \mathbf{n}_{j}}{x'_{j}} \otimes
      \Psi\tbinom{x'_{1},\ldots,x'_{m}}{x}.
  \end{multline*}
  On the other hand $(i_{!}\Phi \odot_{Y}
  i_{!}\Psi)\tbinom{y_{1},\ldots,y_{n}}{y}$ is equivalent to
  \[ \colim_{\mathbf{n} \to \mathbf{m} \to \mathbf{1}}
      \colim_{(y'_{j}) \in Y^{m}} \colim_{(x_{i}) \in \prod_{i}
      X_{y_{i}}} \colim_{(x'_{j}) \in \prod_{j} X_{y'_{j}}} \colim_{(x''_{j}) \in \prod_{j} X_{y'_{j}}}\colim_{x
      \in X_{y}} \bigotimes_{j} \Phi\tbinom{x_{i} : i
      \in \mathbf{n}_{j}}{x'_{j}} \otimes
    \Psi\tbinom{x''_{1},\ldots,x''_{m}}{x}, \]
  and the canonical map corresponds under these equivalences to the
  map of colimits arising from the diagonal map $\prod_{j} X_{y'_{j}}
  \to \prod_{j} X_{y'_{j}} \times \prod_{j} X_{y'_{j}}$. Since these
  are \igpds{}, this map is cofinal \IFF{} it is an equivalence, which
  holds \IFF{} the spaces $X_{y}$ for $y \in Y$ are either
  contractible or empty, \ie{} \IFF{} $i$ is a monomorphism.
\end{proof}

\begin{cor}\label{cor:i!ff}
  Let $i \colon X \to Y$ be a monomorphism of \igpds{} and
  $\mathcal{O}$ a weakly contractible \nsiopd{}.
  \begin{enumerate}[(i)]
  \item For every $A \in \Alg_{\mathcal{O}}(\Coll_{X}(\mathcal{V}))$,
    the unit morphism $A \to i^{*}i_{!}A$ is an equivalence.
  \item The functor \[i_{!} \colon
    \Alg_{\mathcal{O}}(\Coll_{X}(\mathcal{V})) \to
    \Alg_{\mathcal{O}}(\Coll_{Y}(\mathcal{V}))\] is fully faithful. \qed
  \end{enumerate}
\end{cor}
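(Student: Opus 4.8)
The plan is to deduce both statements from the adjunction constructed in the preceding proposition, with part (ii) being a purely formal consequence of part (i). Indeed, a left adjoint is fully faithful exactly when the unit of the adjunction is a natural equivalence; so once part (i) supplies an equivalence $A \to i^{*}i_{!}A$ for every $A$, the fully faithfulness of $i_{!} \colon \Alg_{\mathcal{O}}(\Coll_{X}(\mathcal{V})) \to \Alg_{\mathcal{O}}(\Coll_{Y}(\mathcal{V}))$ follows immediately. Thus all the content sits in part (i).

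To prove (i) I would first reduce to the level of underlying collections. By the proposition, the algebra-level functor $i_{!}$ is induced by the monoidal functor $i_{!}$ and $i^{*}$ by the lax monoidal functor $i^{*}$, so both commute with the forgetful functors to $\Coll_{X}(\mathcal{V})$ and $\Coll_{Y}(\mathcal{V})$. Since the forgetful functor $\Alg_{\mathcal{O}}(\Coll_{X}(\mathcal{V})) \to \Coll_{X}(\mathcal{V})$ is conservative, the unit $A \to i^{*}i_{!}A$ is an equivalence of $\mathcal{O}$-algebras \IFF{} its image $\Phi \to i^{*}i_{!}\Phi$ is an equivalence of collections, where $\Phi$ denotes the underlying collection of $A$. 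Getting this compatibility right is the one point that requires care, but it is exactly what the monoidal structure on $i_{!}$ provides, so it is not a serious obstacle.

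It then remains to check that the unit $\Phi \to i^{*}i_{!}\Phi$ is an equivalence in $\Coll_{X}(\mathcal{V})$, where $i^{*}$ is restriction along $i_{\xFeq} \colon \xFeq_{X} \to \xFeq_{Y}$ and $i_{!}$ is left Kan extension along the same map. Here I would argue directly using the fibre computation already carried out in the proof of the preceding proposition: evaluating at $\binom{x_{1},\ldots,x_{n}}{x}$, we have $(i^{*}i_{!}\Phi)\binom{x_{1},\ldots,x_{n}}{x} \simeq i_{!}\Phi\binom{i x_{1},\ldots,i x_{n}}{i x}$, which by that formula is the colimit of $\Phi$ over the fibre $\prod_{j} X_{i x_{j}} \times X_{i x}$ of $i_{\xFeq}$. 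Since $i$ is a monomorphism, each fibre $X_{y}$ of $i$ is empty or contractible; as $X_{i x_{j}}$ contains $x_{j}$ and $X_{i x}$ contains $x$, all these fibres are contractible, so the indexing \igpd{} is contractible and the colimit recovers $\Phi\binom{x_{1},\ldots,x_{n}}{x}$, with the unit map realizing precisely this identification. Equivalently, one may observe that $i_{\xFeq}$ is itself a monomorphism of \igpds{} (its fibres are empty or contractible), so $(i_{\xFeq})_{!}$ is fully faithful and its unit is an equivalence. This completes the argument; the only real input is the fibre computation inherited from the previous proposition.
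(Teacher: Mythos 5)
Your proof is correct and takes essentially the same approach as the paper, which states this corollary without a written proof as an immediate consequence of the preceding proposition. The details you supply---reduction to underlying collections via the conservative forgetful functors, the observation that $i_{\xFeq} \colon \xFeq_{X} \to \xFeq_{Y}$ is again a monomorphism so that the Kan-extension unit $\Phi \to i^{*}i_{!}\Phi$ is an equivalence, and the formal deduction of (ii) from (i)---are precisely the intended argument.
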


\begin{remark}
  We will also need a more general version of \cref{sseqthm}, which follows
  by using part (iii) of \cite{symmseq}*{Proposition 3.5.6} instead of
  (vi): If $F \colon \mathcal{V} \to \mathcal{W}$ is a symmetric
  monoidal functor then composition with $F$ induces a morphism of
  \gnsiopds{} $F_{*} \colon \COLL(\mathcal{V}) \to \COLL(\mathcal{W})$, which
  restricts to lax monoidal functors $F_{*}\colon \Coll_{X}(\mathcal{V}) \to
  \Coll_{X}(\mathcal{W})$. These are compatible with the lax monoidal
  functors $f^{*}$ coming from maps of spaces $f \colon X \to Y$: A
  priori the square
  \[
    \begin{tikzcd}
      \Coll_{Y}(\mathcal{V})  \arrow{r}{F_{*}} \arrow{d}{f^{*}} &
      \Coll_{Y}(\mathcal{W}) \arrow{d}{f^{*}} \\
      \Coll_{X}(\mathcal{V})  \arrow{r}{F_{*}}  &
      \Coll_{X}(\mathcal{W})      
    \end{tikzcd}
  \]
  only commutes up to a natural transformation, but this is clearly a
  natural equivalence since both functors are given by composition.
\end{remark}

\section{Algebras for $\infty$-Operads as
  Modules}\label{subsec:algmod}
In this section we define algebras for an enriched \iopd{}
$\mathcal{O}$ as certain right $\mathcal{O}$-modules in
$\COLL(\mathcal{V})$. We first recall the definition of the
non-symmetric \iopd{} for right modules, and prove that this is weakly
contractible, allowing us to apply \cref{wcopdfib}:
\begin{defn}\label{defn:RMopd}
  Let $\mathbf{rm}$ denote the non-symmetric operad for right modules.
  This has two objects, $a$ and $m$, and there is a unique
  multimorphism $(x_{1},\ldots,x_{n}) \to y$ if
  $x_{1} = \cdots = x_{n} = y = a$ ($n = 0$ allowed) or $x_{1}=y=m$
  and $x_{2}=\cdots = x_{n}=a$, and no multimorphisms otherwise. We
  write $\RM \to \Dop$ for the corresponding \nsiopd{}, or in other
  words the category of operators of $\mathbf{rm}$. This has objects
  sequences $(x_{1},\ldots,x_{n})$ with each $x_{i}$ being either $a$
  or $m$, and a morphism
  $(x_{1},\ldots,x_{n}) \to (y_{1},\ldots,y_{m})$ is given by a map
  $\phi \colon [m] \to [n]$ in $\simp$ and multimorphisms
  $(x_{\phi(i-1)+1},\ldots, x_{\phi(i)}) \to y_{i}$ in $\mathbf{rm}$.
\end{defn}

\begin{propn}
  The category $\RM$ is weakly contractible.
\end{propn}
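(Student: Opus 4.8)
The goal is to show the classifying space $|N\RM|$ is contractible. My first instinct is to look for an initial or terminal object. The empty sequence is \emph{weakly} terminal, since every $(x_{1},\ldots,x_{n})$ admits a morphism to it (via any of the $n+1$ basepoints $[0]\to[n]$, with no further multimorphism data), but it is not terminal because these maps are not unique; dually there is no initial object, as a morphism out of the empty sequence can only land in an all-$a$ sequence. So a bare (co)limit argument will not suffice.

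The natural next attempt is to compare $\RM$ with the full subcategory $\mathcal{A}$ on the all-$a$ sequences. This $\mathcal{A}$ is isomorphic to the category of operators $\Dop$ of the associative operad, which is weakly contractible (it is $\simp^{\op}$, and $\simp$ has an initial object). There is an evident retraction $r\colon \RM\to\mathcal{A}$ sending every module color $m$ to $a$ --- this is a functor, since a multimorphism with target $m$ becomes an honest multimorphism with target $a$ --- together with a convenient global natural transformation $\mathrm{id}_{\RM}\Rightarrow (-)\ast(a)$ that appends a nullary $a$ at the end of each sequence. The plan would be to promote $r$ to a homotopy inverse of the inclusion $\mathcal{A}\hookrightarrow\RM$ and conclude $|N\RM|\simeq|N\mathcal{A}|\simeq \ast$.

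The crux --- and the step I expect to be the main obstacle --- is that $a$ and $m$ admit no unary morphism between them in either direction (the only unary multimorphisms are the identities $a\to a$ and $m\to m$), and a module color can be forgotten only when it lies in a prefix or suffix of the sequence. Consequently there is no single natural transformation witnessing $r\simeq\mathrm{id}_{\RM}$: the basepoint ambiguity in the maps to the empty sequence destroys naturality, and one finds that the comma categories controlling Quillen's Theorem A for $\mathcal{A}\hookrightarrow\RM$ (or for the projection to $\Dop$) are in general disconnected, because the occurrences of $m$ separate the all-$a$ ``windows'' through which a sequence can map. I would therefore treat the module inputs separately, filtering $\RM$ by the number of occurrences of $m$ (which a morphism can only decrease, and only by forgetting), and analyze the successive strata by a gluing argument, reducing contractibility of the total category to that of $\mathcal{A}$; establishing that the relevant homotopy fibers of this filtration are contractible is where the genuine combinatorial work will lie.
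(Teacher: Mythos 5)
Your negative findings are all accurate --- the empty sequence is indeed only weakly terminal, and Quillen's Theorem A genuinely fails in both directions for $\mathcal{A} \hookrightarrow \RM$ (your disconnectedness claim is correct: any map $(m) \to Y$ with $Y$ all-$a$ of length $k$ lies over a \emph{constant} map $[k] \to [1]$, and the constant value $0$ or $1$ is preserved by postcomposition with $\mathcal{A}$-morphisms, so $(m)/\mathcal{A}$ has at least two components). But the proposal stops short of a proof: the concluding filtration-and-gluing plan is entirely unexecuted, and it is precisely where the theorem lives. Your filtration is sensible as far as it goes (morphisms can only decrease the number of $m$'s, so the full subcategories $\RM_{\leq k}$ are cosieves exhausting $\RM$), but no mechanism is offered for showing the strata inclusions are weak equivalences, and none is obvious. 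There is also one outright error along the way: the appending maps $X \to X \ast (a)$ exist objectwise but do \emph{not} assemble into a natural transformation $\id_{\RM} \Rightarrow (-) \ast (a)$. For $f \colon X \to Y$ with $X$ of length $n$ and $Y$ of length $m$, lying over $\phi \colon [m] \to [n]$, the two composites around the naturality square lie over different maps $[m+1] \to [n]$ whenever $\phi(m) < n$, i.e.\ whenever $f$ forgets a nonempty suffix: one sends $m+1$ to $\phi(m)$, the other to $n$. This is the same basepoint ambiguity you identify a few lines later, so it is tangential to your eventual plan, but it means even the warm-up step fails as stated.

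The idea you are missing is to run the comparison on the \emph{other} side of the dichotomy you found: use the all-$m$ objects rather than the all-$a$ ones. The paper defines a functor $\mu \colon \Dintop \to \RM$ over $\Dop$ sending $[n]$ to the length-$n$ sequence $(m,\ldots,m)$, extended along inert morphisms, and shows $\mu$ is coinitial via \cite{HTT}*{Theorem 4.1.3.1}: for $X = (i_{0},\ldots,i_{n})_{\RM}$ the slice $(\Dintop)_{/X}$ has a \emph{terminal} object, because every morphism $(m,\ldots,m) \to X$ factors uniquely as an inert morphism followed by the canonical degeneracy $\mu([n]) \to X$ inserting the $a$'s by nullary operations. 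Note how this evades exactly the obstruction you diagnosed: a block mapping to $m$ must be the singleton $(m)$, so maps \emph{out of} all-$m$ objects are rigid, and the only ambiguity --- which prefix and suffix of $m$'s gets forgotten --- is absorbed as the inert part of the factorization; by contrast, for maps into all-$a$ objects the basepoint ambiguity is irreducible, which is why your comma categories disconnect. Since coinitial functors are weak homotopy equivalences and $\Dintop$ is weakly contractible (the inclusion $\Dintop \hookrightarrow \Dop$ is cofinal and $\Dop$ has an initial object), $\RM$ is weakly contractible --- a half-page argument with no stratification needed. If you want to salvage your own outline, the observation that morphisms only \emph{forget} $m$'s is exactly what makes the slices over $\mu$ have terminal objects, so your combinatorial insight feeds directly into the coinitiality check rather than into a gluing argument.
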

\begin{proof}
  In this proof it is convenient to use the notation
  $(i_{0},\ldots,i_{n})_{\RM}$ for the object of $\RM$ given by the
  sequence $(a,\ldots,a,m,\cdots,m,a,\ldots,a)$ where there are $n$
  copies of $m$ and $i_{t}$ copies of $a$ between the $t$th and
  $(t+1)$th copy of $m$ (and $i_{0}$ before the first and $i_{n}$
  after the last). Define a functor $\mu \colon \Dintop \to \RM$ over $\Dop$ by
  taking $[n]$ to the unique object of the form $[0,\ldots,0]_{\RM} =
  (m,\ldots,m)$ over $[n]$, and determined on morphisms by the
  inert morphisms between these objects. We claim that $\mu$ is
  coinitial, and so in particular a weak homotopy equivalence. To see
  this, it suffices by \cite{HTT}*{Theorem 4.1.3.1} to show that for
  every object $X \in \RM$ the category $(\Dintop)_{/X}$ is weakly
  contractible. But this category has a terminal object: if $X =
  (i_{0},\ldots,i_{n})_{\RM}$ then any morphism $(0,\ldots,0)_{\RM}
  \to X$ factors as an inert morphism followed by the (unique)
  degeneracy $\mu([n]) \to X$. Since $\Dintop$ is weakly contractible
  (for example, because the inclusion $\Dintop \hookrightarrow \Dop$
  is cofinal and $\Dop$ has an initial object), this implies that
  $\RM$ is also weakly contractible.
\end{proof}

\begin{cor}
  The functor \[\Algd^{\opd}_{\RM_{\xF}}(\mathcal{V}) \simeq
    \Alg_{\RM}(\COLL(\mathcal{V})) \to \mathcal{S},\]
  given by evaluation at $() \in \RM_{0}$, is a cartesian
  fibration corresponding to the functor $\mathcal{S} \to \CatI$ that
  takes $X$ to $\Alg_{\RM}(\Coll_{X}(\mathcal{V}))$ and a
  morphism $f \colon X \to Y$ to the functor given by composition with
  the lax monoidal functor $f^{*}\colon \Coll_{Y}(\mathcal{V}) \to
  \Coll_{X}(\mathcal{V})$. \qed
\end{cor}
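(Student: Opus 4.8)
The plan is to read off this corollary as the special case of \cref{wcopdfib} for the weakly contractible \nsiopd{} $\RM$, whose weak contractibility is exactly the content of the preceding proposition. First I would apply \cref{wcopdfib} with $\mathcal{O} = \RM$. Since $\RM$ is weakly contractible, the hypothesis is met, and the conclusion states that evaluation at the basepoint $* \in \RM_{0}$ exhibits $\Alg_{\RM}(\COLL(\mathcal{V})) \to \mathcal{S}$ as a cartesian fibration classified by the functor sending $X$ to $\Alg_{\RM}(\Coll_{X}(\mathcal{V}))$ and a morphism $f \colon X \to Y$ to composition with the lax monoidal functor $f^{*} \colon \Coll_{Y}(\mathcal{V}) \to \Coll_{X}(\mathcal{V})$. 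This is already the right-hand description in the statement.

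Next I would check that the basepoint $* \in \RM_{0}$ appearing in \cref{wcopdfib} is the empty-sequence object $() \in \RM_{0}$ named here. This is immediate from the presentation of $\RM$ as a category of operators: the fibre $\RM_{0}$ over $[0] \in \Dop$ consists of the length-zero sequences, of which $()$ is the only one. Hence evaluation at $*$ and evaluation at $()$ are literally the same functor, and none of the classifying data changes.

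Finally, to produce the left-hand side of the asserted equivalence I would invoke \cref{sseqthm}(iv) with $\mathcal{O} = \RM$, giving a natural equivalence $\Algd^{\opd}_{\RM_{\xF}}(\mathcal{V}) \simeq \Alg_{\RM}(\COLL(\mathcal{V}))$, and transport the cartesian fibration structure along it. The only point deserving a moment's attention is that this equivalence is compatible with the projections to $\mathcal{S}$, \ie{} with evaluation at the basepoint; but that compatibility is part of the naturality asserted in \cref{sseqthm}(iv), so I expect no genuine obstacle. The corollary is thus a purely formal consequence of the preceding proposition together with \cref{wcopdfib} and \cref{sseqthm}(iv), which is why it requires no separate argument beyond assembling these inputs.
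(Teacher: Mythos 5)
Your proposal is correct and matches the paper's own (implicit) argument exactly: the corollary is stated with an immediate \qed{} precisely because it is the specialization of \cref{wcopdfib} to $\mathcal{O} = \RM$, whose weak contractibility is the preceding proposition, with the identification $\Algd^{\opd}_{\RM_{\xF}}(\mathcal{V}) \simeq \Alg_{\RM}(\COLL(\mathcal{V}))$ supplied by \cref{sseqthm}(iv). Your additional check that $()$ is the unique object of $\RM_{0}$ is a sensible point to make explicit, and raises no issue.
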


To define algebras we want to restrict to those modules that are
concentrated in degree $0$, which will be justified by the next proposition.
\begin{defn}
  We say that $\Phi \in \Coll_{X}(\mathcal{V})$ is \emph{concentrated
    in degree $0$} if \[\Phi\binom{x_{1},\ldots,x_{n}}{y} \simeq
  \emptyset\] whenever $n > 0$, where $\emptyset$ denotes the initial
object in $\mathcal{V}$.
\end{defn}

\begin{propn}\label{propn:degzero}
  Let $\mathcal{V}$ be a symmetric monoidal \icat{} compatible with
  colimits indexed by small \igpds{}.
  \begin{enumerate}[(i)]
  \item The functor $Z \colon \Coll_{X}(\mathcal{V}) \to \Fun(X,
  \mathcal{V})$ given by composition with $X \hookrightarrow
  \mathbb{F}^{\simeq}_{X}$ has a fully faithful left adjoint, which identifies
    $\Fun(X, \mathcal{V})$ with the collections
    that are concentrated in degree $0$.
  \item If $M \colon \mathbb{F}^{\simeq}_{X} \to \mathcal{V}$ is
    concentrated in degree $0$, then so is $M \odot_{X} N$ for any $N
    \in \Coll_{X}(\mathcal{V})$.
  \item The composition product induces a right
    $\Coll_{X}(\mathcal{V})$-module structure on the \icat{}
    $\Fun(X, \mathcal{V})$.
  \item For $f \colon X \to Y$, composition with $f$ and the induced
    functor $\xFeq_{X} \to \xFeq_{Y}$ gives a lax $\RM$-monoidal
    functor \[f^{*} \colon (\Fun(Y, \mathcal{V}),
      \Coll_{Y}(\mathcal{V})) \to (\Fun(X, \mathcal{V}),
      \Coll_{X}(\mathcal{V}))\]
  \item Composition with a symmetric monoidal functor $F \colon
    \mathcal{V} \to \mathcal{W}$ gives a lax $\RM$-monoidal functor
    \[F_{*} \colon (\Fun(X, \mathcal{V}), \Coll_{X}(\mathcal{V})) \to
      (\Fun(X, \mathcal{W}), \Coll_{X}(\mathcal{W})).\] If $F$
    preserves colimits indexed by small \igpds{}, then $F_{*}$ is an
    $\RM$-monoidal functor.
  \end{enumerate}
\end{propn}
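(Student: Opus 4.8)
The plan is to prove (i) and (ii) by direct inspection and then obtain (iii)--(v) uniformly from the double \icat{} $\COLL(\mathcal{V})$ of \cref{sseqthm}. For (i), I would first record that $\xFeq_{X}=\coprod_{n\geq 0}X^{\times n}_{h\Sigma_{n}}\times X$ has degree-zero part ($n=0$) equal to $*\times X\simeq X$, and that the map $X\hookrightarrow\xFeq_{X}$ is exactly the inclusion of this summand. Then $Z$ is restriction along a fully faithful inclusion, so its left adjoint is the left Kan extension $\iota_{!}$ along $\iota\colon X\hookrightarrow\xFeq_{X}$ (which exists since $\mathcal{V}$ is closed under \igpd{}-indexed colimits, in particular has an initial object $\emptyset$). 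Because $\iota$ is the inclusion of a union of connected components, for a point in degree $n>0$ the comma \igpd{} computing $\iota_{!}$ is empty, so $\iota_{!}M$ takes the value $\emptyset$ there while restricting to $M$ in degree zero; hence $\iota_{!}$ is fully faithful, $Z\iota_{!}\simeq\mathrm{id}$, and the essential image is precisely the collections concentrated in degree $0$.

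For (ii), I would read the statement off the composition-product formula of \cref{sseqthm}(iii),
\[(M\odot_{X}N)\tbinom{x_{1},\ldots,x_{n}}{z}\simeq\colim_{\mathbf{n}\to\mathbf{m}\to\mathbf{1}}\ \colim_{(y_{i})\in X^{\times m}}\ \bigotimes_{i\in\mathbf{m}}M\tbinom{x_{k}:k\in\mathbf{n}_{i}}{y_{i}}\otimes N\tbinom{y_{1},\ldots,y_{m}}{z}.\]
If $M$ is concentrated in degree $0$ and $n>0$, then $\mathbf{n}\neq\emptyset$, so in every factorization $\mathbf{n}\to\mathbf{m}\to\mathbf{1}$ at least one fibre $\mathbf{n}_{i}$ is nonempty, and the corresponding factor $M\tbinom{x_{k}:k\in\mathbf{n}_{i}}{y_{i}}$ is $\emptyset$. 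Since $\otimes$ commutes with the empty colimit in each variable, every term of the colimit vanishes, and so does $(M\odot_{X}N)\tbinom{x_{1},\ldots,x_{n}}{z}$.

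For (iii)--(v) the observation I would exploit is that $\xFeq_{\emptyset,X}\simeq X$, since only the $n=0$ term survives when the source space is empty; thus $\Fun(X,\mathcal{V})\simeq\COLL(\mathcal{V})(\emptyset,X)$ is the \icat{} of horizontal morphisms from the empty space to $X$. Specialising the formula of \cref{sseqthm}(iii) to $\mathbf{n}=\mathbf{0}$ shows that horizontally composing such a morphism with a horizontal endomorphism of $X$ reproduces exactly the degree-zero action of (ii) (equivalently, the self-action of $\Coll_{X}(\mathcal{V})$ transported along $\iota_{!}$). Since in any double \icat{} the horizontal morphisms into a fixed object form a right module over the monoidal \icat{} of its horizontal endomorphisms, this gives the $\RM$-monoidal structure on $(\Fun(X,\mathcal{V}),\Coll_{X}(\mathcal{V}))$ required for (iii). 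For (iv) I would use functoriality in the source space: a map $f\colon X\to Y$ is a vertical morphism of $\COLL(\mathcal{V})$, and the restriction on horizontal hom-\icats{} induced by its conjoint in the framed double \icat{} is the lax monoidal $f^{*}$ of \cite{symmseq}*{Proposition 3.4.8} on endomorphisms together with precomposition $f^{*}\colon\Fun(Y,\mathcal{V})\to\Fun(X,\mathcal{V})$ on morphisms out of $\emptyset$; these are automatically compatible, giving a lax $\RM$-monoidal functor. For (v) I would use functoriality in $\mathcal{V}$: the (lax) morphism of double \icats{} $F_{*}\colon\COLL(\mathcal{V})\to\COLL(\mathcal{W})$ of \cite{symmseq}*{Proposition 3.5.6} restricts on the relevant hom-\icats{} to postcomposition with $F$ on modules and to the lax monoidal $F_{*}$ on endomorphisms, hence is lax $\RM$-monoidal; when $F$ preserves \igpd{}-indexed colimits it is a genuine morphism of double \icats{} by \cref{sseqthm}(v), and so $\RM$-monoidal.

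The computations behind (i) and (ii) are routine; the real content lies in (iii)--(v), and the hard part will be coherence. I expect the main obstacle to be extracting from the framed double-\icat{} structure of $\COLL(\mathcal{V})$ the claimed $\RM$-monoidal \icats{} and (lax) $\RM$-monoidal functors \emph{as such}---that is, producing a coherent right-module structure (a genuine $\RM$-monoidal \icat{}, not merely the pointwise action formula) and its functoriality in the base object $\emptyset$, in $X$ via conjoints, and in $\mathcal{V}$ via $F_{*}$---and in verifying that the two descriptions of $\Fun(X,\mathcal{V})$, as horizontal morphisms out of $\emptyset$ and as the degree-zero collections of (i), carry the same action. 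Once this bookkeeping identifying the relevant hom-\icats{} and their composites is in place, each of (iii)--(v) becomes a formal consequence of general properties of (framed) double \icats{}.
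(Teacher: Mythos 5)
Your treatments of (i) and (ii) coincide with the paper's, which disposes of them in one line: (i) is the pointwise left Kan extension formula applied to the inclusion of the $n=0$ component of $\xFeq_{X}$, and (ii) is read off the composition formula of \cref{sseqthm}(iii) exactly as you do. For (iii)--(v), however, you take a genuinely different route from the paper, which never passes through $\COLL(\mathcal{V})(\emptyset,X)$: it obtains (iii) formally by combining (i) and (ii) --- the monoidal \icat{} $\Coll_{X}(\mathcal{V})$ is tautologically a right module over itself (pull back its $\Ass$-monoidal structure along the map of nonsymmetric operads $\mathbf{rm}\to\mathbf{ass}$ collapsing both colours), and by (i) and (ii) the full subcategory of degree-zero collections, i.e.\ $\Fun(X,\mathcal{V})$, is closed under this action, so it inherits the $\RM$-structure simply by restriction to a full subcategory; (iv) and (v) then follow by restricting the lax monoidal functors $f^{*}$ of \cite{symmseq}*{Proposition 3.4.8} and $F_{*}$ of \S\ref{sec:review}, regarded as lax $\RM$-monoidal functors between the tautological pairs, which visibly preserve the degree-zero subcategories. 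This sidesteps precisely the coherence problem you flag as the hard part of your plan: the general facts you invoke --- that in a framed double \icat{} the homs out of a fixed object form a \emph{coherent} right module over the endomorphism monoidal \icat{}, functorially via conjoints and via morphisms of double \icats{} --- are true, but they are not established in \cite{symmseq} in the form you need, and in the paper's operadic formalism an $\RM$-algebra in $\COLL(\mathcal{V})$ has a single underlying space (the fibre $\RM_{0}$ consists of the one object $()$), so your two-object $(\emptyset,X)$-picture would additionally have to be translated back into the pair-over-$X$ form in which the proposition is stated, i.e.\ your closing verification that the $\emptyset$-hom action agrees with the degree-zero action transported along $\iota_{!}$ is genuinely needed and not automatic. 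In short: your outline is mathematically workable and conceptually attractive, but it front-loads substantial double-categorical bookkeeping where, given (i) and (ii), a two-line restriction argument suffices --- which is what makes the paper's proof of (iii)--(v) essentially formal.
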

\begin{proof}
  Part (i) is obvious from the description of $\xF^{\simeq}_{X}$ as
  $\coprod_{n} X^{\times n}_{h\Sigma_{n}} \times X$ and the formula
  for pointwise left Kan extensions, while part (ii) follows
  immediately from the description of composition of horizontal
  morphisms in $\COLL(\mathcal{V})$ in \cref{sseqthm}. Part (iii) then
  holds by combining parts (i) and (ii), and parts (iv) and (v) follow
  by restricting the lax monoidal functors discussed in
  \S\ref{sec:review}.
\end{proof}

\begin{defn}
  Let $\mathcal{O}$ be a $\mathcal{V}$-\iopd{} with space of objects
  $X$, viewed as an associative algebra in
  $\Coll_{X}(\mathcal{V})$. An \emph{$\mathcal{O}$-algebra in
    $\mathcal{V}$} is a
  right $\mathcal{O}$-module in $\Fun(X, \mathcal{V})$. We write
  $\Alg_{\mathcal{O}}(\mathcal{V})$ for the \icat{}
  $\RMod_{\mathcal{O}}(\Fun(X, \mathcal{V}))$ of these right
  modules.
\end{defn}

\begin{remark}\label{rmk:algfun}
  By \cref{propn:degzero}(iv) we see that for $\mathcal{O} \in
  \Opd_{Y}(\mathcal{V})$, composition with $f \colon X \to Y$ gives a
  functor $\Alg_{\mathcal{O}}(\mathcal{V}) \to
  \Alg_{f^{*}\mathcal{O}}(\mathcal{V})$, while composition with a
  symmetric monoidal functor $F \colon \mathcal{V} \to \mathcal{W}$
  gives a functor $\Alg_{\mathcal{O}}(\mathcal{V}) \to
  \Alg_{F_{*}\mathcal{O}}(\mathcal{W})$.
\end{remark}

Since there is always a formula for free modules, with this definition
we immediately get a formula for free algebras over enriched \iopds{}:
\begin{propn}\label{propn:freeopdalg}
  The forgetful functor $U_{\mathcal{O}}\colon \Alg_{\mathcal{O}}(\mathcal{V}) \to \Fun(X,
  \mathcal{V})$ has a left adjoint $F_{\mathcal{O}}$, and the
  endofunctor $U_{\mathcal{O}}F_{\mathcal{O}}$ satisfies
  \[ U_{\mathcal{O}}F_{\mathcal{O}}M(x) \simeq \coprod_{n}
    \colim_{(x_{1},\ldots,x_{n}) \in
      X^{n}_{h \Sigma_{n}}} M(x_{1})\otimes \cdots \otimes
    M(x_{n}) \otimes \mathcal{O}\binom{x_{1},\ldots,x_{n}}{x}.\]
  Moreover, $U_{\mathcal{O}}$ preserves sifted colimits and the
  adjunction is monadic.
\end{propn}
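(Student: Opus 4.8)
The plan is to recognize $\Alg_{\mathcal{O}}(\mathcal{V})$ as an \icat{} of right modules over an associative algebra and then feed this into the general theory of modules from \cite{HA}. By \cref{propn:degzero} the \icat{} $\Fun(X,\mathcal{V})$ is a right module over the monoidal \icat{} $\Coll_{X}(\mathcal{V})$, via the degree-zero inclusion and the composition product $\odot_{X}$, and $\Alg_{\mathcal{O}}(\mathcal{V}) = \RMod_{\mathcal{O}}(\Fun(X,\mathcal{V}))$ is by definition the \icat{} of right modules over the algebra $\mathcal{O}$. The left adjoint $F_{\mathcal{O}}$ is then supplied formally by \cite{HA}*{\S 4.2.4}: the free right $\mathcal{O}$-module on $M \in \Fun(X,\mathcal{V})$ is $M \odot_{X}\mathcal{O}$ with the right action induced by the multiplication of $\mathcal{O}$, and \cref{propn:degzero}(ii) ensures this composite is again concentrated in degree zero, so that it really lands in $\Fun(X,\mathcal{V})$. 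In particular $U_{\mathcal{O}}F_{\mathcal{O}}M \simeq M \odot_{X}\mathcal{O}$.

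To read off the stated formula I would evaluate $M \odot_{X}\mathcal{O}$ at the object $\binom{}{x}$ of $\xFeq_{X}$ (output $x$, empty list of inputs) using the description of horizontal composition in \cref{sseqthm}(iii) with $\Phi = M$ and $\Psi = \mathcal{O}$. Since the list of inputs is empty, the relevant index is $\mathbf{n} = \mathbf{0}$, and a factorization $\mathbf{0} \to \mathbf{m} \to \mathbf{1}$ consists of nothing but the object $\mathbf{m}$ (both maps being unique), so that $\Fact(\mathbf{0}\to\mathbf{1}) \simeq \coprod_{m} B\Sigma_{m}$; moreover every inner factor is the value $M\binom{}{y_{i}} \simeq M(y_{i})$ of the degree-zero collection $M$. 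Absorbing the $\Sigma_{m}$-action into the colimit over $X^{m}$ converts $\colim_{B\Sigma_{m}}\colim_{(y_{i})\in X^{m}}$ into $\colim_{X^{m}_{h\Sigma_{m}}}$, and relabelling yields exactly $\coprod_{n}\colim_{(x_{1},\ldots,x_{n})\in X^{n}_{h\Sigma_{n}}} M(x_{1})\otimes\cdots\otimes M(x_{n})\otimes \mathcal{O}\binom{x_{1},\ldots,x_{n}}{x}$. This is the one genuine computation, but it is a direct unwinding of \cref{sseqthm}(iii).

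For monadicity I would invoke the \icatl{} Barr--Beck theorem \cite{HA}*{Theorem 4.7.3.5}. The functor $U_{\mathcal{O}}$ is conservative, as a map of right $\mathcal{O}$-modules is an equivalence as soon as it is one on underlying objects. For the remaining hypothesis, a $U_{\mathcal{O}}$-split simplicial object has underlying split augmented simplicial object in $\Fun(X,\mathcal{V})$; its colimit is \emph{absolute} and hence preserved by the action functor $-\odot_{X}\mathcal{O}$, so it lifts canonically to $\Alg_{\mathcal{O}}(\mathcal{V})$ and is preserved by $U_{\mathcal{O}}$. Thus the adjunction is monadic with monad $U_{\mathcal{O}}F_{\mathcal{O}}$, and no hypotheses beyond the existence of $\odot_{X}$ enter, since only absolute colimits intervene.

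Finally, preservation of sifted colimits by $U_{\mathcal{O}}$ would follow from the colimit results of \cite{HA}*{\S 4.2.3}: sifted colimits of right $\mathcal{O}$-modules are computed on underlying objects (so that $U_{\mathcal{O}}$ preserves them) provided the action $M \mapsto M\odot_{X}\mathcal{O}$ preserves sifted colimits in the variable $M$. By the explicit free-algebra formula this reduces to showing that the $n$-fold tensor power $M \mapsto M(x_{1})\otimes\cdots\otimes M(x_{n})$ preserves sifted colimits, which I would deduce from the tensor product of $\mathcal{V}$ preserving sifted colimits separately in each variable, together with the fact that sifted colimits commute with finite products (so that precomposing the tensor functor with the diagonal is harmless); the \igpd{}-indexed colimits building $\odot_{X}$ commute with the sifted colimit automatically. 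I expect this last point to be the only real obstacle: in contrast to the adjunction and the formula, which are formal, it is the one place where the interaction between $\otimes$ and sifted colimits is used, so the argument rests on having the appropriate colimit-compatibility of $\mathcal{V}$ available.
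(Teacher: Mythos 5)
Your proposal is correct and takes essentially the same route as the paper's proof: the left adjoint and the identification $U_{\mathcal{O}}F_{\mathcal{O}}M \simeq M \odot_{X} \mathcal{O}$ come from the free-module construction of \cite{HA}*{\S 4.2.4}, the stated formula from expanding the composition product of \cref{sseqthm}(iii), sifted colimits from the results of \cite{HA}*{\S 4.2.3} (using that $\odot_{X}$ preserves them in the module variable), and monadicity from the Barr--Beck theorem \cite{HA}*{Theorem 4.7.3.5}. The only cosmetic difference is that the paper deduces the Barr--Beck hypotheses from conservativity (\cite{HA}*{Proposition 4.2.3.1}) together with preservation of sifted colimits (\cite{HA}*{Corollary 4.2.3.5}), whereas you additionally treat $U_{\mathcal{O}}$-split simplicial objects directly via absoluteness of split geometric realizations.
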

\begin{proof}
  By \cite{HA}*{Corollary 4.2.4.8} the left adjoint $F_{\mathcal{O}}$
  exists, and $U_{\mathcal{O}}F_{\mathcal{O}}(M)$ is given by the
  composition product $M \odot \mathcal{O}$ (with $M$ viewed as a
  symmetric sequence concentrated in degree $0$). Expanding out this
  composition product now gives the formula.
  
  It follows from \cite{HA}*{Proposition 4.2.3.1} that
  $U_{\mathcal{O}}$ detects equivalences and from \cite{HA}*{Corollary
    4.2.3.5} that $\Alg_{\mathcal{O}}(\mathcal{V})$ has sifted
  colimits and $U_{\mathcal{O}}$ preserves these, since the
  composition product preserves sifted colimits in each variable. The
  adjunction is therefore monadic by the monadicity theorem for
  \icats{}, \cite{HA}*{Theorem 4.7.3.5}.
\end{proof}

Applying \cite{enr}*{Proposition A.5.9}, we get:
\begin{cor}
  If $\mathcal{V}$ is a presentably symmetric monoidal \icat{} and
  $\mathcal{O}$ is a $\mathcal{V}$-enriched \iopd{}, then the \icat{}
  $\Alg_{\mathcal{O}}(\mathcal{V})$ is presentable. \qed
\end{cor}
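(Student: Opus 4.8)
The plan is to establish presentability of $\Alg_{\mathcal{O}}(\mathcal{V})$ by combining the monadicity statement from \cref{propn:freeopdalg} with the cited criterion \cite{enr}*{Proposition A.5.9}, which asserts that the category of algebras for an accessible monad on a presentable \icat{} is again presentable. Concretely, I would first observe that since $\mathcal{V}$ is presentably symmetric monoidal, the functor \icat{} $\Fun(X, \mathcal{V})$ is itself presentable (it is a functor category from a small \igpd{} $X$ into a presentable \icat{}, and such functor categories are presentable). This is the base \icat{} on which the relevant monad acts.

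Next I would identify the monad. By \cref{propn:freeopdalg} the forgetful functor $U_{\mathcal{O}} \colon \Alg_{\mathcal{O}}(\mathcal{V}) \to \Fun(X, \mathcal{V})$ is monadic, so $\Alg_{\mathcal{O}}(\mathcal{V})$ is equivalent to the \icat{} of algebras for the monad $T_{\mathcal{O}} := U_{\mathcal{O}} F_{\mathcal{O}}$ on $\Fun(X, \mathcal{V})$. The explicit formula from \cref{propn:freeopdalg} expresses $T_{\mathcal{O}}$ as a coproduct over $n$ of colimits indexed by the \igpds{} $X^{n}_{h\Sigma_{n}}$ of iterated tensor products with the objects $\mathcal{O}\binom{x_{1},\ldots,x_{n}}{x}$. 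The key point to verify is that this monad is \emph{accessible}, i.e.\ that $T_{\mathcal{O}}$ is an accessible functor.

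To check accessibility, I would argue that each constituent of the formula preserves sufficiently filtered colimits. The tensor product $\otimes$ on $\mathcal{V}$ preserves colimits separately in each variable (as $\mathcal{V}$ is presentably symmetric monoidal), so the $n$-fold tensor functor is accessible; the colimit over the \igpd{} $X^{n}_{h\Sigma_{n}}$ and the coproduct over $n$ are themselves colimit constructions, which are accessible functors since they are indexed by small diagrams. Composing and combining these accessible building blocks shows $T_{\mathcal{O}}$ is accessible. Alternatively, \cref{propn:freeopdalg} already records that $U_{\mathcal{O}}$ preserves sifted colimits, and since $F_{\mathcal{O}}$ is a left adjoint it preserves all colimits, so $T_{\mathcal{O}} = U_{\mathcal{O}}F_{\mathcal{O}}$ preserves sifted colimits, which is the essential accessibility input.

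The main obstacle, such as it is, lies in confirming that the hypotheses of \cite{enr}*{Proposition A.5.9} are met—namely presentability of the base $\Fun(X,\mathcal{V})$ and accessibility of the monad—rather than in any substantial new construction. Once these are in place, \cite{enr}*{Proposition A.5.9} applies directly to conclude that $\Alg_{\mathcal{O}}(\mathcal{V}) \simeq \Fun(X,\mathcal{V})^{T_{\mathcal{O}}}$ is presentable. I expect the proof to be a single short invocation of the cited result after spelling out that its inputs are exactly the presentability of $\Fun(X, \mathcal{V})$ and the sifted-colimit preservation already established in \cref{propn:freeopdalg}.
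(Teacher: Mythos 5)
Your proposal is correct and follows exactly the paper's (very terse) argument: the paper proves this corollary simply by applying \cite{enr}*{Proposition A.5.9} to the monadic adjunction of Proposition~\ref{propn:freeopdalg}, which is precisely your plan. The details you supply --- presentability of $\Fun(X,\mathcal{V})$ and accessibility of the monad $U_{\mathcal{O}}F_{\mathcal{O}}$ via sifted-colimit preservation --- are the correct verifications that the paper leaves implicit.
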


\begin{remark}\label{rmk:ffeseqonalgs}
  Let $F \colon \mathcal{O} \to \mathcal{O}'$ be a morphism of
  $\mathcal{V}$-\iopds{} given on spaces of objects by $f \colon X \to
  Y$, and suppose $f$ is surjective on $\pi_{0}$ and $F$ is fully
  faithful in the sense that all the maps
  \[ \mathcal{O}\tbinom{x_{1},\dots,x_{n}}{y} \to
    \mathcal{O}'\tbinom{f(x_{1}),\dots,f(x_{n})}{f(y)}\] are
  equivalences in $\mathcal{V}$. Then we have a commutative square
  \[
    \begin{tikzcd}
     \Alg_{\mathcal{O}'}(\mathcal{V}) \ar[r, "F^{*}"] \ar[d] & \Alg_{\mathcal{O}}(\mathcal{V})  \ar[d] \\
     \Fun(Y,\mathcal{V}) \ar[r, "f^{*}"'] & \Fun(X,\mathcal{V}),
    \end{tikzcd}
  \]
  where the surjectivity of $f$ implies that the composite functor
  $\Alg_{\mathcal{O}'}(\mathcal{V}) \to \Fun(X,\mathcal{V})$ is a
  monadic right adjoint. Using the formula from
  \cref{propn:freeopdalg} it is easy to see that $F^{*}$ gives an
  equivalence of monads on $\Fun(X,\mathcal{V})$ and so gives an
  equivalence of \icats{} $\Alg_{\mathcal{O}'}(\mathcal{V}) \simeq
  \Alg_{\mathcal{O}}(\mathcal{V})$ by \cite{HA}*{Corollary
    4.7.3.16}. This applies in particular if $\mathcal{O}'$ is the
  completion of $\mathcal{O}$, so by a 2-out-of-3 argument it follows
  that any fully faithful and essentially surjective morphism of
  $\mathcal{V}$-\iopds{} $F \colon \mathcal{O} \to \mathcal{P}$
  induces an equivalence
  \[ \Alg_{\mathcal{O}}(\mathcal{V}) \simeq
    \Alg_{\mathcal{P}}(\mathcal{V})\]
  on \icats{} of algebras in $\mathcal{V}$.
\end{remark}

We end this section by showing that the nullary operations of a
$\mathcal{V}$-\iopd{} $\mathcal{O}$ give a canonical
$\mathcal{O}$-algebra, using the next observation:
\begin{propn}
  $Z \colon \Coll_{X}(\mathcal{V}) \to \Fun(X, \mathcal{V})$ is a
  functor of $\Coll_{X}(\mathcal{V})$-modules.
\end{propn}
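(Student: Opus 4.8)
The plan is to transport the module structure across the adjunction of \cref{propn:degzero}(i). Write $L \colon \Fun(X, \mathcal{V}) \to \Coll_X(\mathcal{V})$ for the fully faithful left adjoint of $Z$, which identifies $\Fun(X, \mathcal{V})$ with the collections concentrated in degree $0$. By \cref{propn:degzero}(ii) this full subcategory is closed under right multiplication by $\Coll_X(\mathcal{V})$, and the module structure of \cref{propn:degzero}(iii) is by construction the restriction of $\odot_X$ along $L$; thus $L$ is a morphism of right $\Coll_X(\mathcal{V})$-modules. I want to deduce the same for its right adjoint $Z$.

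Dually to the $i_{!}$ proposition above --- where \cite{HA}*{Corollary 7.3.2.12} was used to promote a \emph{left} adjoint --- here $Z$ is the \emph{right} adjoint of the (strong, hence oplax) module functor $L$, and so inherits a canonical lax right-module structure, with coherence maps
\[ Z(\Phi) \cdot \Psi \longrightarrow Z(\Phi \odot_X \Psi), \qquad \Phi, \Psi \in \Coll_X(\mathcal{V}), \]
which under $L \dashv Z$ are adjoint to the maps $LZ(\Phi) \odot_X \Psi \to \Phi \odot_X \Psi$ obtained from the counit $\epsilon \colon LZ \to \mathrm{id}$ (using that $L$ is strong to identify $L(Z(\Phi) \cdot \Psi) \simeq LZ(\Phi) \odot_X \Psi$). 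By the standard criterion, $Z$ is a genuine (strong) morphism of $\Coll_X(\mathcal{V})$-modules precisely when these coherence maps are equivalences.

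Since $L$ is fully faithful the unit $\mathrm{id} \to ZL$ is an equivalence, so the coherence map above is an equivalence if and only if $Z(\epsilon \odot \Psi) \colon Z(LZ(\Phi) \odot_X \Psi) \to Z(\Phi \odot_X \Psi)$ is. To check this I would unwind \cref{sseqthm}(iii) with $\mathbf{n} = \mathbf{0}$: then every fibre $\mathbf{0}_i$ of a map $\mathbf{0} \to \mathbf{m}$ is empty, so
\[ (\Phi \odot_X \Psi)\binom{}{x} \simeq \colim_{\mathbf{0} \to \mathbf{m} \to \mathbf{1}} \colim_{(y_i) \in X^{\times m}} \bigotimes_{i \in \mathbf{m}} \Phi\binom{}{y_i} \otimes \Psi\binom{y_1,\ldots,y_m}{x} \]
involves only the nullary values $\Phi\binom{}{y_i}$, that is, only the degree-$0$ part $Z(\Phi)$ of the left factor. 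As $\epsilon \colon LZ(\Phi) \to \Phi$ is an equivalence in degree $0$, it follows that $Z(\epsilon \odot \Psi)$ is an equivalence, as required.

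The substantive content is the degree-$0$ computation just sketched. I expect the main obstacle to be instead the formal first step --- producing the inherited lax module structure on the adjoint $Z$ together with the criterion for it to be strong --- which needs the $\infty$-categorical machinery for (op)lax module functors and their adjoints, i.e.\ the right-adjoint, module-theoretic analogue of the use of \cite{HA}*{Corollary 7.3.2.12} in the $i_{!}$ proposition.
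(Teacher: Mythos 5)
Your proof is correct and is essentially the paper's own argument: the paper likewise starts from the observation that the inclusion $\Fun(X,\mathcal{V}) \to \Coll_{X}(\mathcal{V})$ is a map of $\Coll_{X}(\mathcal{V})$-modules, invokes \cite{HA}*{Corollary 7.3.2.7} to endow the right adjoint $Z$ with a lax $\RM$-monoidal structure --- this is precisely the right-adjoint, module-theoretic statement you flagged as the main obstacle, so that step is available off the shelf --- and then concludes that the coherence maps $Z(M) \odot_{X} N \to Z(M \odot_{X} N)$ are equivalences ``by the formula for $\odot_{X}$''. Your degree-$0$ computation (that for $\mathbf{n} = \mathbf{0}$ only the nullary values of the left factor enter the colimit, so the counit $LZ(\Phi) \to \Phi$ induces an equivalence after applying $Z(- \odot_{X} \Psi)$) is exactly the justification the paper leaves implicit in that phrase.
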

\begin{proof}
  By definition of the $\Coll_{X}(\mathcal{V})$-module structure on
  $\Fun(X, \mathcal{V})$, the inclusion
  $\Fun(X, \mathcal{V}) \to \Coll_{X}(\mathcal{V})$ is a functor of
  $\Coll_{X}(\mathcal{V})$-modules. Using \cite{HA}*{Corollary
    7.3.2.7}, this implies that its right adjoint $Z$ is a lax
  $\RM$-monoidal functor. Thus for $M, N \in \Coll_{X}(\mathcal{V})$
  there are natural maps
  \[ Z(M) \odot_{X} N \to Z(M \odot_{X} N);\] by the formula for
  $\odot_{X}$ these maps are equivalences, and so $Z$ is an
  $\RM$-monoidal functor.
\end{proof}

\begin{cor}
  If $\mathcal{O}$ is an associative algebra in
  $\Coll_{X}(\mathcal{V})$ and $M \in \Coll_{X}(\mathcal{V})$ is a right
  $\mathcal{O}$-module, then the restriction
  $Z(M) \in \Fun(Y, \mathcal{V})$ is also a right
  $\mathcal{O}$-module. \qed
\end{cor}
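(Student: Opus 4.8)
The plan is to deduce this directly from the preceding proposition together with the functoriality of right modules along (lax) $\RM$-monoidal functors. First I would record precisely what the preceding proposition supplies: it exhibits $Z$ as an $\RM$-monoidal functor from the pair $(\Coll_{X}(\mathcal{V}), \Coll_{X}(\mathcal{V}))$, with $\Coll_{X}(\mathcal{V})$ regarded as a right module over itself, to the pair $(\Fun(X, \mathcal{V}), \Coll_{X}(\mathcal{V}))$, whose component at the algebra color $a$ is the identity functor on $\Coll_{X}(\mathcal{V})$ and whose component at the module color $m$ is $Z$.

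The key step is then the general principle that a lax $\RM$-monoidal functor induces a functor on $\RM$-algebras, \ie{} on pairs consisting of an associative algebra together with a right module over it. Such a functor is in particular a map over $\RM$ that preserves operadic inert morphisms, so postcomposing an $\RM$-algebra of the source with it again preserves inert morphisms and hence lands among the $\RM$-algebras of the target, compatibly with the induced functor on the underlying associative algebras. Applying this to the $\RM$-algebra that encodes $\mathcal{O}$ together with the right $\mathcal{O}$-module $\Phi$, I obtain an $\RM$-algebra in $(\Fun(X, \mathcal{V}), \Coll_{X}(\mathcal{V}))$ whose module component is $Z(\Phi)$ and whose algebra component is the image of $\mathcal{O}$ under the $a$-component of $Z$.

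Finally, since that $a$-component is the identity on $\Coll_{X}(\mathcal{V})$, the image of $\mathcal{O}$ is again $\mathcal{O}$, so the resulting $\RM$-algebra is exactly a right $\mathcal{O}$-module structure on $Z(\Phi) \in \Fun(X, \mathcal{V})$, which is the assertion. The step carrying essentially all the weight is the preceding proposition; in fact only the lax (rather than strong) $\RM$-monoidality of $Z$ is needed here, so there is no genuine obstacle, which is why the statement can be recorded as immediate.
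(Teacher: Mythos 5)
Your proposal is correct and is precisely the argument the paper leaves implicit (the corollary is stated with no written proof): since the preceding proposition makes $Z$ a (lax) $\RM$-monoidal functor whose component on the algebra color is the identity of $\Coll_{X}(\mathcal{V})$, composition with it carries the $\RM$-algebra $(\mathcal{O}, \Phi)$ to an $\RM$-algebra with algebra component still $\mathcal{O}$ and module component $Z(\Phi)$. Your observation that only lax $\RM$-monoidality is needed — even though the proposition establishes the stronger statement — is also accurate, as is your implicit correction of the statement's typos ($Z(\Phi)$ rather than $Z(M)$, landing in $\Fun(X,\mathcal{V})$ rather than $\Fun(Y,\mathcal{V})$).
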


Since an algebra is canonically a right module over itself, this
specializes to:
\begin{cor}\label{nullismod}
  Suppose $\mathcal{O}$ is an algebra in
  $\Coll_{X}(\mathcal{V})$, \ie{} a $\mathcal{V}$-\iopd{} with $X$ as
  space of objects. Then the functor $Z(\mathcal{O}) \colon X \to
  \mathcal{V}$ picking out the nullary operations is canonically a
  right  $\mathcal{O}$-module. \qed
\end{cor}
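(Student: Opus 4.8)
The plan is to deduce this directly from the two preceding results together with the standard fact that an associative algebra is canonically a right module over itself.

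First I would recall that for any monoidal \icat{} $\mathcal{C}$ and any associative algebra $A$ in $\mathcal{C}$, the object $A$ carries a canonical right $A$-module structure, with action map given by the multiplication of $A$; this is encoded by the canonical section $\Alg(\mathcal{C}) \to \RMod(\mathcal{C})$ of the forgetful functor constructed in \cite{HA}. Applying this with $\mathcal{C} = \Coll_{X}(\mathcal{V})$ and $A = \mathcal{O}$ produces a canonical right $\mathcal{O}$-module structure on $\mathcal{O}$ itself, viewed as an object of $\Coll_{X}(\mathcal{V})$.

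Next I would invoke the preceding corollary, which, using that $Z$ is a functor of $\Coll_{X}(\mathcal{V})$-modules by the preceding proposition, shows that $Z$ carries right $\mathcal{O}$-modules in $\Coll_{X}(\mathcal{V})$ to right $\mathcal{O}$-modules in $\Fun(X, \mathcal{V})$. Applying this to the self-module from the previous step equips $Z(\mathcal{O}) \in \Fun(X, \mathcal{V})$ with a canonical right $\mathcal{O}$-module structure, which is by definition an $\mathcal{O}$-algebra structure. To finish I would check the identification of the underlying object: since $Z$ is restriction along the $n = 0$ inclusion $X \hookrightarrow \xFeq_{X}$, we have $Z(\mathcal{O})(y) \simeq \mathcal{O}\binom{}{y}$, so $Z(\mathcal{O})$ indeed picks out the nullary operations, as claimed.

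There is essentially no obstacle here: the entire content has been front-loaded into the preceding proposition (that $Z$ is a morphism of modules) and the preceding corollary (that such morphisms preserve modules). The only point requiring a moment's care is the \emph{canonicity} of the self-module structure, \ie{} that it is natural in $\mathcal{O}$; but this too is built into the functoriality of the section $\Alg(\mathcal{C}) \to \RMod(\mathcal{C})$ from \cite{HA}, so the statement follows formally.
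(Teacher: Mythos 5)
Your proposal is correct and matches the paper's argument exactly: the paper derives this corollary by noting that an algebra is canonically a right module over itself and then applying the immediately preceding corollary (which rests on $Z$ being a functor of $\Coll_{X}(\mathcal{V})$-modules), which is precisely your route. The identification $Z(\mathcal{O})(y) \simeq \mathcal{O}\binom{}{y}$ and the appeal to the canonical section $\Alg \to \RMod$ from \cite{HA} are the same implicit ingredients the paper relies on when it states the result with no further proof.
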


\section{Comparison with Model Categories of Operad Algebras}\label{subsec:comp}
Let $\mathbf{V}$ be a symmetric monoidal model category (with
cofibrant unit). Then by \cite{HA}*{Proposition 4.1.7.4} the
localization $\mathbf{V}[W^{-1}]$ (with $W$ the class of weak
equivalences) is a symmetric monoidal \icat{}, and the
localization functor $\mathbf{V} \to \mathbf{V}[W^{-1}]$ is symmetric
monoidal when restricted to the cofibrant objects. If $\mathbf{O}$ is
a (levelwise cofibrant) operad in $\mathbf{V}$ then this means we can
also view $\mathbf{O}$ as an operad in $\mathbf{V}[W^{-1}]$. Moreover,
in good cases there is a model structure on the category
$\Alg_{\mathbf{O}}(\mathbf{V})$ of $\mathbf{O}$-algebras in
$\mathbf{V}$. In this section we will give conditions under which the
corresponding \icat{}
$\Alg_{\mathbf{O}}(\mathbf{V})[W_{\mathbf{O}}^{-1}]$ (with
$W_{\mathbf{O}}$ the class of weak equivalences of
$\mathbf{O}$-algebras) is equivalent to the \icat{}
$\Alg_{\mathbf{O}}(\mathbf{V}[W^{-1}])$, defined as in the previous
section. In order to do the comparison in sufficient generality to
cover examples such as symmetric spectra, we do not want to assume
that the unit of the monoidal structure is cofibrant. Instead we
consider model categories with a subcategory of \emph{flat} objects in
the following sense:
\begin{defn}
  Let $\mathbf{V}$ be a symmetric monoidal model category.\footnote{We
    assume that model categories have functorial factorizations.}  A
  \emph{subcategory of flat objects} is a full subcategory
  $\mathbf{V}^{\flat}$ that satisfies the following conditions:
\begin{itemize}
\item $\mathbf{V}^{\flat}$ is a symmetric monoidal subcategory, \ie{}
  the unit is flat and the tensor product of two flat objects is flat,
\item If $X$ is flat and $Y \to Y'$ is a weak equivalence between flat
  objects, then $X \otimes Y \to X \otimes Y'$ is again a weak
  equivalence.
\item All cofibrant objects are flat.
\end{itemize}
\end{defn}
\begin{ex}
  If the unit of $\mathbf{V}$ is cofibrant, then the subcategory
  $\mathbf{V}^{c}$ of cofibrant objects is a subcategory of flat
  objects. 
\end{ex}

\begin{propn}
  Let $\mathbf{V}$ be a symmetric monoidal model category and
  $\mathbf{V}^{\flat}$ a subcategory of flat objects. Then the inclusions
$\mathbf{V}^{c} \hookrightarrow \mathbf{V}^{\flat} \hookrightarrow
\mathbf{V}$ induce equivalences of localizations
\[ \mathbf{V}^{c}[W^{-1}] \isoto \mathbf{V}^{\flat}[W^{-1}] \isoto
  \mathbf{V}[W^{-1}],\] where we denote the collections of weak
equivalences in the subcategories by $W$ in all cases.
\end{propn}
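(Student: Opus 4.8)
The plan is to exhibit functorial cofibrant replacement as a homotopical deformation retraction and to feed this into the universal property of the $\infty$-categorical localization. Concretely, I would rely on the following standard fact about relative categories: if $(\mathcal{C}, W)$ is a relative category, $Q \colon \mathcal{C} \to \mathcal{C}$ preserves weak equivalences and has image contained in a full subcategory $\mathcal{C}_{0}$, and $q \colon Q \Rightarrow \mathrm{id}_{\mathcal{C}}$ is a natural transformation that is a weak equivalence at every object, then the inclusion $\mathcal{C}_{0} \hookrightarrow \mathcal{C}$ induces an equivalence $\mathcal{C}_{0}[W^{-1}] \isoto \mathcal{C}[W^{-1}]$.

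To set up the deformation retraction: since the model categories here are assumed to have functorial factorizations, factoring $\emptyset \to X$ as a cofibration followed by a trivial fibration produces a functor $Q \colon \mathbf{V} \to \mathbf{V}$ together with a natural transformation $q \colon Q \Rightarrow \mathrm{id}_{\mathbf{V}}$ such that each $Q(X)$ is cofibrant and each $q_{X}$ is a weak equivalence. First I would check that $Q$ preserves weak equivalences: for $f$ a weak equivalence, the naturality square of $q$ together with two-out-of-three forces $Q(f)$ to be one as well. The crucial observation is that $Q$ always lands in $\mathbf{V}^{c}$, which, since all cofibrant objects are flat, is contained in both $\mathbf{V}^{\flat}$ and $\mathbf{V}$. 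Thus the single pair $(Q, q)$ simultaneously exhibits $\mathbf{V}^{c} \hookrightarrow \mathbf{V}^{\flat}$, $\mathbf{V}^{\flat} \hookrightarrow \mathbf{V}$, and $\mathbf{V}^{c} \hookrightarrow \mathbf{V}$ as deformation retractions (restricting $Q$ and $q$ to the relevant subcategory in each case), so by the principle above all three induce equivalences on localizations; in particular the two maps in the statement are equivalences.

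The only real work is justifying the principle itself, which I expect to be the main (though routine) obstacle. The point is that both the inclusion $\mathcal{C}_{0} \hookrightarrow \mathcal{C}$ and the corestriction $\overline{Q} \colon \mathcal{C} \to \mathcal{C}_{0}$ invert weak equivalences, so by the universal property of the localization, which identifies $\Fun(\mathcal{C}[W^{-1}], \mathcal{D})$ with the full subcategory of functors $\mathcal{C} \to \mathcal{D}$ sending $W$ to equivalences (and similarly for $\mathcal{C}_{0}$), they descend to functors between $\mathcal{C}[W^{-1}]$ and $\mathcal{C}_{0}[W^{-1}]$. After localization, $q$ becomes a pointwise equivalence relating each of the two round-trip composites to the respective identity functor; since a pointwise equivalence between $W$-inverting functors corresponds under the universal property to an equivalence of the induced functors on the localization, both round-trips are equivalent to identities, yielding the claimed equivalence. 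I would cite this universal property (as developed in \cite{HTT}) rather than reprove it, and would only need to record that on $\mathbf{V}^{c}$, respectively $\mathbf{V}^{\flat}$, the transformation $q$ is valued in weak equivalences between cofibrant, respectively flat, objects, so that it indeed lies in the relevant class $W$.
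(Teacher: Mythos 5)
Your proposal is correct and follows essentially the same argument as the paper: both use the functorial cofibrant replacement $(Q,q)$ as a weak-equivalence-preserving deformation retraction onto $\mathbf{V}^{c}$, observe that $q$ restricts to a pointwise weak equivalence on each subcategory, and conclude via the universal property of the localization. The only (immaterial) difference is that you treat $\mathbf{V}^{\flat}\hookrightarrow\mathbf{V}$ directly with the same pair $(Q,q)$, whereas the paper handles $\mathbf{V}^{c}\hookrightarrow\mathbf{V}$ and $\mathbf{V}^{c}\hookrightarrow\mathbf{V}^{\flat}$ and deduces the remaining equivalence by two-out-of-three.
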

\begin{proof}
  Let $Q \colon \mathbf{V} \to \mathbf{V}$ be a cofibrant
  replacement functor, with a natural weak equivalence $\eta \colon Q
  \to \id$. If $i$ denotes the inclusion $\mathbf{V}^{c}
  \hookrightarrow \mathbf{V}$ then we may view $Q$ as a functor
  $\mathbf{V} \to \mathbf{V}^{c}$ and $\eta$ as a natural
  transformation $i Q \to \id_{\mathbf{V}}$. If $X$ is cofibrant, then
  $\eta_{X} \colon QX \to X$ is a morphism in $\mathbf{V}^{c}$, so we
  may view $\eta i \colon iQi \to i$ as a natural transformation
  $\eta^{c} \colon Qi \to \id_{\mathbf{V}^{c}}$. The functor $Q$
  preserves weak equivalences, and both $\eta$ and $\eta^{c}$ are
  natural weak equivalences. It follows that $Q$ induces a functor
  $\mathbf{V}[W^{-1}] \to \mathbf{V}^{c}[W^{-1}]$ and the
  transformations $\eta$ and $\eta^{c}$ induce transformations that
  exhibit this as an inverse of the functor $\mathbf{V}^{c}[W^{-1}]
  \to \mathbf{V}[W^{-1}]$ induced by $i$. The same argument applies to
  $Q$ restricted to the full subcategory $\mathbf{V}^{\flat}$; the
  functor
  $\mathbf{V}^{\flat}[W^{-1}] \to \mathbf{V}[W^{-1}]$ is therefore an
  equivalence by the 2-of-3 property of equivalences.
\end{proof}

\begin{cor}
  Let $\mathbf{V}$ be a symmetric monoidal model category and
  $\mathbf{V}^{\flat}$ a subcategory of flat objects. Then the \icat{}
  $\mathbf{V}[W^{-1}]$
  inherits a symmetric monoidal structure such that the functor
  $\mathbf{V}^{\flat} \to \mathbf{V}[W^{-1}]$ is symmetric monoidal.
\end{cor}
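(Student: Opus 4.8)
The plan is to deduce the symmetric monoidal structure on $\mathbf{V}[W^{-1}]$ from the corresponding statement for the symmetric monoidal \emph{category} $\mathbf{V}^{\flat}$, which (unlike $\mathbf{V}^{c}$) contains the unit and is therefore genuinely symmetric monoidal. First I would record that $\mathbf{V}^{\flat}$ is a symmetric monoidal category: this is immediate from the definition of a subcategory of flat objects, since the unit is flat and the tensor product of two flat objects is flat. The essential point is then to check that the class $W$ of weak equivalences in $\mathbf{V}^{\flat}$ is closed under the tensor product. Given weak equivalences $f \colon X \to X'$ and $g \colon Y \to Y'$ between flat objects, I would factor $f \otimes g$ as the composite
\[ X \otimes Y \xrightarrow{f \otimes \mathrm{id}_{Y}} X' \otimes Y
   \xrightarrow{\mathrm{id}_{X'} \otimes g} X' \otimes Y'. \]
The second map is a weak equivalence because $X'$ is flat and $g$ is a weak equivalence between flat objects; the first map is a weak equivalence by the same condition applied after commuting the tensor factors along the (invertible) symmetry. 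Since the intermediate object $X' \otimes Y$ is again flat and weak equivalences compose, $f \otimes g$ lies in $W$.

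Next I would invoke the monoidal localization result \cite{HA}*{Proposition 4.1.7.4}, now applied to the symmetric monoidal \icat{} underlying $\mathbf{V}^{\flat}$ together with the tensor-closed class $W$: this produces a symmetric monoidal structure on the localization $\mathbf{V}^{\flat}[W^{-1}]$ for which the localization functor $\mathbf{V}^{\flat} \to \mathbf{V}^{\flat}[W^{-1}]$ is symmetric monoidal. Finally, the preceding proposition supplies an equivalence $\mathbf{V}^{\flat}[W^{-1}] \simeq \mathbf{V}[W^{-1}]$ induced by the inclusion, along which I would transport the symmetric monoidal structure; the composite
\[ \mathbf{V}^{\flat} \to \mathbf{V}^{\flat}[W^{-1}] \simeq \mathbf{V}[W^{-1}] \]
is then symmetric monoidal and is identified with the localization functor restricted to $\mathbf{V}^{\flat}$, which is exactly the functor in the statement.

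The step I expect to be the main obstacle is verifying that the cited localization result applies in the form required here, namely to an ordinary symmetric monoidal category equipped with a tensor-closed class of weak equivalences, rather than to a full monoidal model category with cofibrant unit. Concretely, one must confirm that closure of $W$ under $\otimes$ (as opposed to a Quillen bifunctor hypothesis) is precisely what makes the symmetric monoidal structure descend to the Dwyer--Kan localization, with the localization functor becoming symmetric monoidal and initial among symmetric monoidal functors out of $\mathbf{V}^{\flat}$ inverting $W$. Once this is settled, the transport of structure along the equivalence from the previous proposition is purely formal.
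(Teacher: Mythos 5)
Your proposal is correct and follows essentially the same route as the paper: the paper likewise applies \cite{HA}*{Proposition 4.1.7.4} to $\mathbf{V}^{\flat}$, where the tensor product is compatible with weak equivalences by the flatness axioms, and transports the resulting symmetric monoidal structure along the equivalence $\mathbf{V}^{\flat}[W^{-1}] \simeq \mathbf{V}[W^{-1}]$ from the preceding proposition. Your closing worry is unfounded, since that result of Lurie is stated precisely for a (symmetric) monoidal $\infty$-category equipped with a tensor-compatible class of morphisms --- no model structure or cofibrant unit is required --- so your explicit two-step factorization verifying that $W$ is closed under $\otimes$ (which the paper leaves implicit) is exactly what is needed.
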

\begin{proof}
  By assumption, in $\mathbf{V}^{\flat}$ the tensor product is
  compatible with weak equivalences, and so the \icat{}
  $\mathbf{V}[W^{-1}] \simeq \mathbf{V}^{\flat}[W^{-1}]$ inherits a
  symmetric monoidal structure with this property by
  \cite{HA}*{Proposition 4.1.7.4}.
\end{proof}

Using \cref{rmk:algfun}, composition with the
symmetric monoidal functor $\mathbf{V}^{\flat} \to \mathbf{V}[W^{-1}]$
gives a natural functor
\[ \Alg_{\mathbf{O}}(\mathbf{V}^{\flat}) \to
  \Alg_{\mathbf{O}}(\mathbf{V}[W^{-1}]),\]
if $\mathbf{O}$ is a levelwise flat $\mathbf{V}$-operad. Here we can
interpret $\Alg_{\mathbf{O}}(\mathbf{V}^{\flat})$ as the classical
ordinary category of $\mathbf{O}$-algebras in $\mathbf{V}^{\flat}$.

\begin{defn}
  An $S$-coloured operad $\mathbf{O}$ in a symmetric monoidal model category $\mathbf{V}$ is
  called \emph{admissible} if there exists a model structure on
  $\Alg_{\mathbf{O}}(\mathbf{V})$ where a morphism is a weak
  equivalence or a fibration precisely if its underlying morphism in
  $\Fun(S, \mathbf{V})$ is one (\ie{} it is a weak equivalence or
  fibration in $\mathbf{V}$ for each element of $S$).  
\end{defn}

\begin{defn}
  An $S$-coloured $\mathbf{V}$-operad $\mathbf{O}$ is called
  \emph{$\Sigma$-cofibrant} if the unit map $\bbone_{S} \to
  U(\mathbf{O})$ is a cofibration in the projective model structure on
  $\Fun(\mathbb{F}^{\simeq}_{S}, \mathbf{V})$, where $U$ denotes the
  forgetful functor from operads to collections and $\bbone_{S}$ is
  the monoidal unit for the composition product, given by
  \[ \bbone_{S}\tbinom{s_{1},\ldots,s_{n}}{s'} =
    \begin{cases}
      \bbone, & n = 1, s_{1} = s',\\
      \emptyset, & \txt{otherwise},
    \end{cases}
  \]
  where $\bbone$ is the monoidal unit in $\mathbf{V}$.
\end{defn}

\begin{ex}
  A one-coloured $\mathbf{V}$-operad $\mathbf{O}$ is
  $\Sigma$-cofibrant precisely if $\bbone \to \mathbf{O}(1)$ is a
  cofibration, and the object $\mathbf{O}(n)$ is projectively
  cofibrant in $\Fun(B\Sigma_{n}, \mathbf{V})$ for all $n \neq 1$.
\end{ex}

\begin{defn}
  Let $\mathbf{V}$ be a symmetric monoidal model category and
  $\mathbf{V}^{\flat}$ a subcategory of flat objects. We will say that
  a $\mathbf{V}$-operad $\mathbf{O}$ is \emph{flat} if it is enriched
  in the full subcategory $\mathbf{V}^{\flat}$.
\end{defn}

\begin{remark}
  Since cofibrant objects are flat, if $\mathbf{O}$ is
  $\Sigma$-cofibrant then it is flat precisely if in addition the objects of
  (unary) endomorphisms $\mathbf{O}(x,x) \in \mathbf{V}$ are all
  flat.
\end{remark}

By \cite{PavlovScholbachOpd}*{Proposition 6.2}, if $\mathbf{O}$ is an
admissible $\Sigma$-cofibrant $\mathbf{V}$-operad, then cofibrant
$\mathbf{O}$-algebras have cofibrant underlying objects in
$\mathbf{V}$. Since cofibrant objects are in particular flat, if
$\mathbf{O}$ is flat, admissible and $\Sigma$-cofibrant we have a functor
\[ \Alg_{\mathbf{O}}(\mathbf{V})^{c} \to
  \Alg_{\mathbf{O}}(\mathbf{V}^{\flat}) \to
  \Alg_{\mathbf{O}}(\mathbf{V}[W^{-1}]).\]
This takes weak equivalences in $\Alg_{\mathbf{O}}(\mathbf{V})^{c}$ to
equivalences in $\Alg_{\mathbf{O}}(\mathbf{V}[W^{-1}])$, since the
weak equivalences are lifted from the weak equivalences in
$\mathbf{V}$, and so induces a functor of \icats{}
\[ \Alg_{\mathbf{O}}(\mathbf{V})^{c}[W_{\mathbf{O}}^{-1}] \to
  \Alg_{\mathbf{O}}(\mathbf{V}[W^{-1}]),\]
where $W_{\mathbf{O}}$ denotes the collection of weak
equivalences between $\mathbf{O}$-algebras.
\begin{thm}\label{thm:opdalgrect}
  Let $\mathbf{V}$ be a symmetric monoidal model category equipped
  with a subcategory $\mathbf{V}^{\flat}$ of flat objects. If
  $\mathbf{O}$ is a flat admissible
  $\Sigma$-cofibrant $\mathbf{V}$-operad, then the functor
  \[ \Alg_{\mathbf{O}}(\mathbf{V})^{c}[W_{\mathbf{O}}^{-1}] \to
    \Alg_{\mathbf{O}}(\mathbf{V}[W^{-1}])\] is an equivalence of
  \icats{}.
\end{thm}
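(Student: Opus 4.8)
The plan is to identify both sides as the $\infty$-categories of algebras for equivalent monads on the same base $\infty$-category, following the strategy explicitly cited in the introduction (the proofs of \cite{HA}*{Theorem 4.1.4.4} and \cite{PavlovScholbachOpd}*{Theorem 7.10}). The common base will be $\mathbf{V}[W^{-1}]$, viewed as the $\infty$-category of underlying objects. Concretely, I would first establish that the forgetful functor from each side is monadic, identify the two resulting monads, and then exhibit a natural equivalence between them; the universal property of monadic adjunctions then upgrades this to the desired equivalence of $\infty$-categories over $\mathbf{V}[W^{-1}]$.

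Let me break this into steps. \textbf{Step 1.} On the target side, \cref{propn:freeopdalg} already gives that $U_{\mathbf{O}} \colon \Alg_{\mathbf{O}}(\mathbf{V}[W^{-1}]) \to \Fun(S, \mathbf{V}[W^{-1}])$ is monadic with monad $T_{\infty}$ computed by the explicit free-algebra formula (the composition product $M \circ \mathbf{O}$), using that $\mathbf{O}$ is flat so that the levels $\mathbf{O}(n)$ genuinely compute the right colimits after localization. \textbf{Step 2.} On the source side, I would show that the composite $\Alg_{\mathbf{O}}(\mathbf{V})^{c}[W_{\mathbf{O}}^{-1}] \to \Fun(S, \mathbf{V}[W^{-1}])$ (forgetting the algebra structure) is also monadic; here the input is the model-categorical admissibility of $\mathbf{O}$, which gives a lifted model structure whose localization inherits a monadic adjunction by general localization results for model categories of algebras (this is where Pavlov--Scholbach enters). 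Call the resulting monad $T_{\mathrm{mod}}$. \textbf{Step 3.} The free $\mathbf{O}$-algebra in $\mathbf{V}$ on a cofibrant object is given by the same coproduct-of-colimits formula as in \cref{propn:freeopdalg}, but computed \emph{strictly} in $\mathbf{V}$; the crux is that this strict colimit, for $\Sigma$-cofibrant $\mathbf{O}$ and cofibrant input, is already a homotopy colimit, so that passing to $\mathbf{V}[W^{-1}]$ identifies $T_{\mathrm{mod}}$ with $T_{\infty}$. This is precisely the content of the Pavlov--Scholbach rectification results on free operad algebras (and the earlier-cited \cite{PavlovScholbachOpd}*{Proposition 6.2} on cofibrant algebras having cofibrant underlying objects). \textbf{Step 4.} With both sides identified as $T$-algebras for equivalent monads $T_{\mathrm{mod}} \simeq T_{\infty}$ over $\mathbf{V}[W^{-1}]$, the comparison functor constructed before the theorem is the canonical one, and an equivalence of monadic adjunctions with equivalent monads is automatically an equivalence of $\infty$-categories of algebras.

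\textbf{The main obstacle} is Step 3: showing that the strict free-algebra functor computes the correct derived (homotopy-colimit) free-algebra functor after localization, i.e.\ that the levelwise strict colimits appearing in the free $\mathbf{O}$-algebra formula are homotopy-invariant when $\mathbf{O}$ is $\Sigma$-cofibrant and the input is cofibrant. This is exactly where the flatness hypothesis and the $\Sigma$-cofibrancy of $\mathbf{O}$ are essential: flatness guarantees that tensoring with the operad levels preserves weak equivalences, and $\Sigma$-cofibrancy guarantees that the homotopy orbits $(-)_{h\Sigma_{n}}$ are computed correctly by the strict $\Sigma_{n}$-quotients on cofibrant objects. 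I would invoke the relevant Pavlov--Scholbach theorem directly here rather than reproving it, since the whole point of the cited strategy is that the hard homotopical analysis of free algebras has already been carried out model-categorically; the remaining work is the formal identification of monads and the bookkeeping to ensure the two free-algebra formulas agree on the nose after localization. A secondary, more routine point to verify is that the cofibrant-objects restriction $\Alg_{\mathbf{O}}(\mathbf{V})^{c}[W_{\mathbf{O}}^{-1}] \simeq \Alg_{\mathbf{O}}(\mathbf{V})[W_{\mathbf{O}}^{-1}]$ does not lose information, which follows because cofibrant replacement in the lifted model structure exhibits the localization of the full algebra category as the localization of its cofibrant subcategory.
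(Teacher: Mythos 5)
Your proposal follows essentially the same route as the paper's proof: both sides are exhibited as monadic over $\Fun(S, \mathbf{V}[W^{-1}])$ (the paper using \cite{MazelGeeQAdj}*{Theorem 2.1} for the induced adjunction, \cite{PavlovScholbachOpd}*{Proposition 7.8} for preservation of sifted homotopy colimits, and the Barr--Beck theorem, with \cref{propn:freeopdalg} handling the $\infty$-categorical side), and the two monads are then identified via the free-algebra formula, using exactly your key observation that $\Sigma$-cofibrancy makes the strict $\Sigma_{n}$-orbits into homotopy orbits, so that \cite{HA}*{Corollary 4.7.3.16} yields the equivalence. Your identification of Step 3 as the crux, and of the roles of flatness and $\Sigma$-cofibrancy there, matches the paper's argument.
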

\begin{proof}
  We follow the proof of \cite{PavlovScholbachOpd}*{Theorem 7.10},
  which in turn is a variant of those of \cite{HA}*{Theorems 4.1.4.4,
    4.5.4.7}.  Let $S$ be the set of objects of $\mathbf{O}$.  The
  right Quillen functor
  $\Alg_{\mathbf{O}}(\mathbf{V}) \to \Fun(S, \mathbf{V})$ induces a
  functor of \icats{}
  $U \colon \Alg_{\mathbf{O}}(\mathbf{V})^{c}[W_{\mathbf{O}}^{-1}] \to
  \Fun(S, \mathbf{V}[W^{-1}])$, which is a right adjoint by
  \cite{MazelGeeQAdj}*{Theorem 2.1}. As $\mathbf{O}$ is
  $\Sigma$-cofibrant, the forgetful functor preserves sifted homotopy
  colimits by \cite{PavlovScholbachOpd}*{Proposition 7.8}. Since it
  also detects weak equivalences, it follows by \cite{HA}*{Theorem
    4.7.3.5} (the monadicity theorem for \icats{}) that $U$ is a
  monadic right adjoint. The same holds for the forgetful functor
  $\Alg_{\mathbf{O}}(\mathbf{V}[W^{-1}]) \to \Fun(S,
  \mathbf{V}[W^{-1}])$ by Proposition~\ref{propn:freeopdalg}, so using
  \cite{HA}*{Corollary 4.7.3.16} we see that to show that the functor
  $\Alg_{\mathbf{O}}(\mathbf{V})^{c}[W_{\mathbf{O}}^{-1}] \to
  \Alg_{\mathbf{O}}(\mathbf{V}[W^{-1}])$ is an equivalence it suffices
  to show that the two associated monads on
  $\Fun(S, \mathbf{V}[W^{-1}])$ have equivalent underlying
  endofunctors. This follows from the formula in
  Proposition~\ref{propn:freeopdalg}, since the $\Sigma_{n}$-orbits
  that appear in the formula for free strict $\mathbf{O}$-algebras are
  homotopy orbits when $\mathbf{O}$ is $\Sigma$-cofibrant.
\end{proof}

The cases to which this result applies are primarily those where
\emph{all} operads are admissible, as more generally we only have
semi-model structure on algebras over $\Sigma$-cofibrant operads.
This includes the following examples, as discussed in 
\cite[\S 7]{PavlovScholbachSymm}:
\begin{enumerate}[(i)]
\item the category $\Set_{\Delta}$ of simplicial sets, equipped with
  the Kan--Quillen model structure,
\item the category $\txt{Top}$ of compactly generated weak Hausdorff
  spaces, equipped with the usual model structure,
\item the category $\txt{Ch}_{k}$ of chain complexes of $k$-vector
  spaces, where $k$ is a field of characteristic $0$ (or more
  generally a ring containing $\mathbb{Q}$), equipped with the
  projective model structure,
\item the category $\txt{Sp}^{\Sigma}$ of symmetric spectra,
  equipped with the positive stable model structure,
\end{enumerate}
In the first three examples the unit is cofibrant, and in the positive
stable model structure on symmetric spectra a suitable subcategory of
flat objects is supplied by the \emph{$S$-cofibrant} objects of
\cite{ShipleyConvenient} (see also \cite{SchwedeSymmSp}*{Chapter 5},
where these are called \emph{flat} objects). 
Note that a $\Sigma$-cofibrant operad in symmetric spectra is
necessarily flat, since the flat objects are the cofibrant objects in
a model structure whose cofibrations include the usual cofibrations.

Specializing to these cases, we have:
\begin{cor}
  \
  \begin{enumerate}[(i)]
  \item Let $\mathbf{O}$ be a $\Sigma$-cofibrant simplicial operad,
    then \[\Alg_{\mathbf{O}}(\Set_{\Delta})[W_{\mathbf{O}}^{-1}] \simeq
    \Alg_{\mathbf{O}}(\mathcal{S}).\]
  \item Let $\mathbf{O}$ be a $\Sigma$-cofibrant topological operad,
    then \[\Alg_{\mathbf{O}}(\txt{Top})[W_{\mathbf{O}}^{-1}] \simeq
    \Alg_{\mathbf{O}}(\mathcal{S}).\]
  \item Let $\mathbf{O}$ be a $\Sigma$-cofibrant dg-operad over a
    field $k$ of characteristic zero, then
    \[\Alg_{\mathbf{O}}(\txt{Ch}_{k})[W_{\mathbf{O}}^{-1}] \simeq
      \Alg_{\mathbf{O}}(\mathcal{D}(k)),\] where $\mathcal{D}(k)$ is
    the derived \icat{} of $k$-modules.
  \item Let $\mathbf{O}$ be a $\Sigma$-cofibrant operad in
    symmetric spectra, then
    \[\Alg_{\mathbf{O}}(\txt{Sp}^{\Sigma})[W_{\mathbf{O}}^{-1}] \simeq
    \Alg_{\mathbf{O}}(\Sp),\] where $\Sp$ is the \icat{} of spectra.
  \end{enumerate}
\end{cor}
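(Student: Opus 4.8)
The plan is to deduce each of the four cases directly from \cref{thm:opdalgrect}. Concretely, for each model category $\mathbf{V}$ I would (a) exhibit a subcategory $\mathbf{V}^{\flat}$ of flat objects, (b) check that a $\Sigma$-cofibrant operad there is flat and admissible, and (c) identify the localization $\mathbf{V}[W^{-1}]$ with the stated target as a \emph{symmetric monoidal} \icat{}. Admissibility of every operad in each of these four examples was already recalled above from \cite[\S 7]{PavlovScholbachSymm}. One further point: \cref{thm:opdalgrect} produces an equivalence out of $\Alg_{\mathbf{O}}(\mathbf{V})^{c}[W_{\mathbf{O}}^{-1}]$, whereas the statement here localizes \emph{all} $\mathbf{O}$-algebras; since $\mathbf{O}$ is admissible, $\Alg_{\mathbf{O}}(\mathbf{V})$ is a model category and the inclusion of its cofibrant objects induces an equivalence $\Alg_{\mathbf{O}}(\mathbf{V})^{c}[W_{\mathbf{O}}^{-1}] \simeq \Alg_{\mathbf{O}}(\mathbf{V})[W_{\mathbf{O}}^{-1}]$ by exactly the cofibrant-replacement argument used above to compare $\mathbf{V}^{c}[W^{-1}]$ with $\mathbf{V}[W^{-1}]$.

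For cases (i)--(iii) the unit of $\mathbf{V}$ is cofibrant, so I would take $\mathbf{V}^{\flat} = \mathbf{V}^{c}$, which is a subcategory of flat objects as observed in the example above. If $\mathbf{O}$ is $\Sigma$-cofibrant then the map $\bbone \to \mathbf{O}(1)$ is a cofibration out of a cofibrant object and each $\mathbf{O}(n)$ with $n \neq 1$ is projectively cofibrant, so every level of $\mathbf{O}$ is cofibrant and hence flat; thus $\mathbf{O}$ is a flat admissible $\Sigma$-cofibrant operad and \cref{thm:opdalgrect} applies. It then remains to invoke the standard symmetric monoidal equivalences $\Set_{\Delta}[W^{-1}] \simeq \mathcal{S}$ and $\txt{Top}[W^{-1}] \simeq \mathcal{S}$ (cartesian monoidal structures) and $\txt{Ch}_{k}[W^{-1}] \simeq \Mod_{k}$ (derived tensor product) to rewrite $\Alg_{\mathbf{O}}(\mathbf{V}[W^{-1}])$ as the claimed \icat{}.

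Case (iv) is the one requiring care, since the sphere spectrum is not cofibrant in the positive stable model structure. Here I would take $\mathbf{V}^{\flat}$ to be the $S$-cofibrant objects of \cite{ShipleyConvenient}; these form a subcategory of flat objects, and, as noted above, a $\Sigma$-cofibrant operad in $\txt{Sp}^{\Sigma}$ is automatically flat because the $S$-cofibrations include the ordinary cofibrations. With these choices \cref{thm:opdalgrect} again applies, and one concludes by identifying $\txt{Sp}^{\Sigma}[W^{-1}] \simeq \Sp$ as symmetric monoidal \icats{} (smash product), using that the weak equivalences of the positive stable model structure are precisely the stable equivalences.

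The theorem does essentially all the real work, so what remains is bookkeeping; the one genuinely delicate point is case (iv). There the flat subcategory must be taken to be the $S$-cofibrant rather than the cofibrant objects, and---more importantly---one must cite the fact that the positive stable model structure presents $\Sp$ together with its smash product as a symmetric monoidal \icat{}, not merely as a plain \icat{}. Getting this symmetric monoidal identification (and the analogous ones in the other three cases) right is the step I would be most careful about, since it is what makes the target $\Alg_{\mathbf{O}}(\mathbf{V}[W^{-1}])$ of \cref{thm:opdalgrect} agree with the stated \icat{} of algebras.
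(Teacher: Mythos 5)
Your proposal is correct and follows essentially the same route as the paper, which deduces the corollary from \cref{thm:opdalgrect} using exactly the choices you make: all operads in these four model categories are admissible by the cited result of Pavlov--Scholbach, the flat subcategory is the cofibrant objects in cases (i)--(iii) (where the unit is cofibrant, so a $\Sigma$-cofibrant operad is levelwise cofibrant, hence flat) and the $S$-cofibrant objects of Shipley in case (iv) (where $\Sigma$-cofibrant implies flat because the $S$-cofibrations contain the ordinary cofibrations), after which one identifies the localizations with $\mathcal{S}$, $\Mod_{k}$ and $\Sp$ as symmetric monoidal \icats{}. Your explicit handling of the passage from $\Alg_{\mathbf{O}}(\mathbf{V})^{c}[W_{\mathbf{O}}^{-1}]$ to $\Alg_{\mathbf{O}}(\mathbf{V})[W_{\mathbf{O}}^{-1}]$ via cofibrant replacement in the lifted model structure is a point the paper leaves implicit, and is a welcome addition rather than a deviation.
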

\begin{remark}
  The case of simplicial operads was already proved as
  \cite{PavlovScholbachOpd}*{Theorem 7.10}.
\end{remark}

\begin{remark}
  According to Spitzweck's thesis \cite{SpitzweckOpd}*{Theorem 4}, a
  $\mathbf{V}$-operad with a single object that is cofibrant in the
  semi-model structure on one-object operads in $\mathbf{V}$ is
  admissible without further assumptions on $\mathbf{V}$. A version
  for coloured operads does not yet seem to appear in the literature,
  but if this is correct then the comparison of \cref{thm:opdalgrect}
  would apply in general for such cofibrant operads.
\end{remark}

\begin{remark}\label{rmk:semimodel}
  If $\mathbf{O}$ is a $\Sigma$-cofibrant operad in a symmetric
  monoidal model category $\mathbf{V}$, then under much weaker
  assumptions on $\mathbf{V}$ there exists a \emph{semi-model}
  structure on the category $\Alg_{\mathbf{O}}(\mathbf{V})$, by a
  result of Spitzweck~\cite{SpitzweckOpd}*{Theorem 5} in the
  one-object case and White--Yau for coloured operads
  \cite{WhiteYauOpd}*{Theorem 6.3.1}. Using results of
  Cisinski~\cite{CisinskiBook}, White and Yau have recently extended
  the results relating structures in model categories to their
  analogues in \icats{} needed to carry out the proof of
  \cref{thm:opdalgrect} in the setting of semi-model categories, and
  thereby extended the comparison with \iopd{} algebras to the case
  where there is only a semi-model structure on algebras over a
  $\Sigma$-cofibrant operad; see \cite{WhiteYauSmith}*{\S 7.3}.
\end{remark}

\section{Endomorphism $\infty$-Operads}\label{subsec:endo}
The first goal of this subsection is to prove that for any morphism of
\igpds{} $f \colon X \to \mathcal{V}^{\simeq}$ there exists a
corresponding \emph{endomorphism \iopd{}} $\End_{\mathcal{V}}(f)$,
where $\mathcal{V}$ denotes a closed symmetric monoidal \icat{}
compatible with small \igpd{}-indexed colimits. Our strategy for
obtaining these objects is taken from \cite{HinichYoneda}*{\S 6.3} and
uses the construction of endomorphism algebras from \cite{HA}*{\S
  4.7.1}, which we first briefly recall:\footnote{We restate it for
  right instead of left modules.}

Suppose $\mathcal{A}$ is a monoidal \icat{} and $\mathcal{M}$ is
right-tensored over $\mathcal{A}$. An \emph{endomorphism algebra} for
an object $M \in \mathcal{M}$ is an associative algebra $\txt{End}(M)$
in $\mathcal{A}$ and a right $\txt{End}(M)$-module structure on $M$
with the universal property that for any associative algebra $A$ in
$\mathcal{A}$, right $A$-module structures on $M$ are naturally
equivalent to morphisms of associative algebras $A \to \txt{End}(M)$.

By \cite{HA}*{Proposition 4.7.1.30, Theorem 4.7.1.34} there exists a
monoidal \icat{} $\mathcal{A}[M]$ whose objects are pairs $(X \in
\mathcal{A}, M \otimes X \to M \txt{ in } \mathcal{M})$, with the
property that an associative algebra in $\mathcal{A}[M]$ corresponds to
an assocative algebra $A \in \mathcal{A}$ together with a right
$A$-module structure on $M$. An endomorphism algebra for $M$ is thus
precisely a terminal object in $\Alg_{\Dop}(\mathcal{A}[M])$. Since
the terminal object of $\mathcal{A}[M]$ has a unique algebra structure
if it exists, we have:
\begin{propn}[\cite{HA}*{Corollary 4.7.1.40}]\label{propn:termendalg}
  If $\mathcal{A}[M]$ has a terminal object $(A, M \otimes A \to M)$
  then $A$ is the underlying object of an endomorphism algebra for
  $M$.\qed
\end{propn}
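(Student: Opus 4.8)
The plan is to reduce the statement to the general fact that the terminal object of a monoidal \icat{} admits an essentially unique associative algebra structure, which moreover exhibits it as the terminal object of the \icat{} of algebras. Granting this, I apply it to $\mathcal{C} := \mathcal{A}[M]$: its terminal object $(A, M \otimes A \to M)$ then acquires an algebra structure and becomes terminal in $\Alg_{\Dop}(\mathcal{A}[M])$. Since the forgetful functor $\Alg_{\Dop}(\mathcal{A}[M]) \to \mathcal{A}[M]$ carries a terminal algebra to the terminal object $(A, M \otimes A \to M)$, its underlying object in $\mathcal{A}$ is $A$. As recalled above, a terminal object of $\Alg_{\Dop}(\mathcal{A}[M])$ corresponds under the established equivalence with pairs (an associative algebra in $\mathcal{A}$, a compatible right-module structure on $M$) to exactly an endomorphism algebra for $M$; reading off its first component gives the claim.

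To prove the general fact I would argue by doctrinal adjunction. Let $F \colon \mathcal{C} \to *$ be the monoidal functor to the trivial monoidal \icat{}; this is the unique such functor and always exists. Because $\mathcal{C}$ has a terminal object $T$, the underlying functor of $F$ admits a right adjoint $G \colon * \to \mathcal{C}$ with $G(*) \simeq T$. By \cite{HA}*{Corollary 7.3.2.7} --- the same input used above to make $Z$ lax monoidal --- the right adjoint $G$ is canonically lax monoidal, and therefore induces a functor $\Alg_{\Dop}(G) \colon \Alg_{\Dop}(*) \to \Alg_{\Dop}(\mathcal{C})$. In particular $T \simeq G(*)$ inherits an algebra structure transported from the unique algebra in $*$.

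For terminality I would observe that the adjunction $F \dashv G$ lifts to an adjunction $\Alg_{\Dop}(F) \dashv \Alg_{\Dop}(G)$, since $\Alg_{\Dop}(-)$ is functorial in lax monoidal functors and preserves this adjunction through its (co)lax monoidal unit and counit. As $\Alg_{\Dop}(*) \simeq *$, the functor $\Alg_{\Dop}(F) \colon \Alg_{\Dop}(\mathcal{C}) \to *$ is the projection to a point, so its right adjoint $\Alg_{\Dop}(G)$ necessarily selects a terminal object of $\Alg_{\Dop}(\mathcal{C})$ whose underlying object is $G(*) \simeq T$. The main obstacle is precisely this doctrinal bookkeeping: one must verify that $G$ is lax monoidal and that the adjunction lifts to algebras compatibly with the forgetful functors, so that the terminal algebra genuinely sits over the terminal object $T$. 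Everything else --- the translation through the dictionary of the preceding paragraphs and the remark that a right adjoint to the projection to a point computes the terminal object --- is formal.
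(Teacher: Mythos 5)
Your proof is correct, and its overall skeleton agrees with the paper's: both arguments first use the dictionary recalled just before \cref{propn:termendalg} (associative algebras in $\mathcal{A}[M]$ are pairs of an algebra in $\mathcal{A}$ and a compatible right module structure on $M$, so an endomorphism algebra for $M$ is exactly a terminal object of $\Alg_{\Dop}(\mathcal{A}[M])$), and then reduce everything to the lemma that a terminal object of a monoidal \icat{}, when it exists, carries an essentially unique associative algebra structure which is moreover terminal in the \icat{} of algebras. The difference is what happens to that lemma. The paper does not prove it: the entire proposition is quoted as \cite{HA}*{Corollary 4.7.1.40}, and the lemma enters only as the one-line assertion ``the terminal object of $\mathcal{A}[M]$ has a unique algebra structure if it exists'' (in \cite{HA} this is Corollary 3.2.2.5, deduced there from the fact that limits of algebras are computed on underlying objects). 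You instead prove the lemma by doctrinal adjunction: the unique monoidal functor $F \colon \mathcal{C} \to *$ to the trivial monoidal \icat{} (equivalently, the structure map $\mathcal{C}^{\otimes} \to \Dop$) has a right adjoint $G$ on underlying \icats{} precisely because $\mathcal{C}$ has a terminal object $T$; by \cite{HA}*{Corollary 7.3.2.7} this $G$ is canonically lax monoidal, the adjunction lifts to $\Alg_{\Dop}(F) \dashv \Alg_{\Dop}(G)$, and since $\Alg_{\Dop}(*) \simeq *$ the right adjoint $\Alg_{\Dop}(G)$ selects a terminal algebra whose underlying object is $G(*) \simeq T$. This is a genuinely different---and pleasantly formal---derivation of the lemma than Lurie's, and it runs on an input the paper already needs elsewhere (Corollary 7.3.2.7 reappears in the proof that $Z$ is a functor of $\Coll_{X}(\mathcal{V})$-modules). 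The step you rightly flag as the delicate one, namely that the adjunction lifts to algebra \icats{} compatibly with the forgetful functors, is exactly the content of the relative adjunction machinery of \cite{HA}*{\S 7.3.2}, so there is no gap; what your route buys is a self-contained proof in place of a citation, at the cost of that bookkeeping.
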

We also note that by construction the forgetful functor
$\mathcal{A}[M] \to \mathcal{A}$ is a right fibration, corresponding
to the functor
\[ A \mapsto \Map_{\mathcal{M}}(M \otimes A, M).\]

In the case of $\Coll_{X}(\mathcal{V})$ and its right module
$\Fun(X, \mathcal{V})$ we can explitly identify this functor:
\begin{propn}\label{propn:endrepr}
  For $M \in \Fun(X, \mathcal{V})$ and $S \in \Coll_{X}(\mathcal{V})$ there is a natural
  equivalence
  \[ \Map_{\Fun(X, \mathcal{V})}(M \odot S, M) \simeq
    \Map_{\Coll_{X}(\mathcal{V})}(S,
    \End_{\mathcal{V}}(M)),\]
  where $\End_{\mathcal{V}}(M) \colon \xFeq_{X} \to \mathcal{V}$ is the functor given by
  \[ \End_{\mathcal{V}}(M)\tbinom{x_{1},\ldots,x_{n}}{x} \simeq \MAP_{\mathcal{V}}(M(x_{1}) \otimes
    \cdots \otimes M(x_{n}), M(x)),\]
  with $\MAP_{\mathcal{V}}$ denoting the internal Hom in $\mathcal{V}$.
\end{propn}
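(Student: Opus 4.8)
The plan is to compute both sides explicitly and match them through the tensor--hom adjunction of the closed structure on $\mathcal{V}$. First I would record that, since $M$ is concentrated in degree $0$, the module $M \circ S \simeq M \odot_{X} S$ is again concentrated in degree $0$ (by \cref{propn:degzero}(ii)), with underlying object computed by the free-algebra formula of \cref{propn:freeopdalg} applied to $\mathcal{O} = S$:
\[ (M \circ S)(x) \simeq \coprod_{n} \colim_{(x_{1},\ldots,x_{n}) \in X^{n}_{h\Sigma_{n}}} M(x_{1}) \otimes \cdots \otimes M(x_{n}) \otimes S\tbinom{x_{1},\ldots,x_{n}}{x}. \]
Before comparing mapping spaces I would make precise the functor $\End_{\mathcal{V}}(M) \colon \xFeq_{X} \to \mathcal{V}$: the assignment $(x_{1},\ldots,x_{n}) \mapsto M(x_{1}) \otimes \cdots \otimes M(x_{n})$ is $\Sigma_{n}$-equivariant and so descends to a functor $X^{n}_{h\Sigma_{n}} \to \mathcal{V}$, and postcomposing with the internal Hom $\MAP_{\mathcal{V}} \colon \mathcal{V}^{\op} \times \mathcal{V} \to \mathcal{V}$ (using that $X \simeq X^{\op}$ as an \igpd{} to absorb the variance in the first argument) yields the functor $\xFeq_{X} \to \mathcal{V}$ whose value at $\tbinom{x_{1},\ldots,x_{n}}{x}$ is the displayed internal Hom.

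Next I would compute the left-hand mapping space. Since $X$ is an \igpd{}, mapping spaces in $\Fun(X, \mathcal{V})$ are limits of pointwise mapping spaces, so
\[ \Map_{\Fun(X,\mathcal{V})}(M \circ S, M) \simeq \lim_{x \in X} \Map_{\mathcal{V}}\big((M \circ S)(x), M(x)\big). \]
Substituting the formula above and using that $\Map_{\mathcal{V}}(-, M(x))$ carries the coproduct to a product and each colimit over $X^{n}_{h\Sigma_{n}}$ to the corresponding limit, this becomes
\[ \lim_{x \in X} \prod_{n} \lim_{(x_{i}) \in X^{n}_{h\Sigma_{n}}} \Map_{\mathcal{V}}\Big(M(x_{1}) \otimes \cdots \otimes M(x_{n}) \otimes S\tbinom{x_{1},\ldots,x_{n}}{x},\, M(x)\Big). \]
The tensor--hom adjunction of the closed structure then identifies each term, naturally in $S$ and compatibly with the $\Sigma_{n}$- and $X$-actions, with $\Map_{\mathcal{V}}\big(S\tbinom{x_{1},\ldots,x_{n}}{x}, \End_{\mathcal{V}}(M)\tbinom{x_{1},\ldots,x_{n}}{x}\big)$. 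Finally, reassembling the iterated limit and product as a single limit over $\xFeq_{X} \simeq \coprod_{n} X^{n}_{h\Sigma_{n}} \times X$ recognizes the result as $\Map_{\Coll_{X}(\mathcal{V})}(S, \End_{\mathcal{V}}(M))$, as desired.

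The main obstacle is not the chain of equivalences itself, which is formal, but upgrading it to a genuine \emph{natural} equivalence of functors of $S$ together with the coherent construction of $\End_{\mathcal{V}}(M)$ as an honest functor on $\xFeq_{X}$. Concretely, the tensor--hom adjunction must be applied with enough naturality that the equivalences descend along the homotopy-orbit quotients $X^{n} \to X^{n}_{h\Sigma_{n}}$ and assemble over the limit over $X$. I would handle this by exhibiting the whole comparison as a map between two functors $\Coll_{X}(\mathcal{V})^{\op} \to \mathcal{S}$, both of which send colimits in $S$ to limits (the left side because $- \odot_{X} S$ preserves colimits in each variable), induced by the counit of the adjunction, and then checking it is an equivalence by the pointwise computation above.
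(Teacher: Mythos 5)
Your proposal is correct and follows essentially the same route as the paper's proof: compute $\Map_{\Fun(X,\mathcal{V})}(M \circ S, M)$ as a limit of pointwise mapping spaces (the paper justifies this step by citing \cite{freepres}*{Proposition 5.1}, using $\Tw(X) \simeq X$ for an \igpd{}), expand $M \circ S$ via the free-algebra formula of \cref{propn:freeopdalg}, apply the tensor--hom adjunction termwise, and reassemble the resulting limits into a single limit over $\xFeq_{X} \simeq \coprod_{n} X^{n}_{h\Sigma_{n}} \times X$, which is identified with the mapping space in $\Coll_{X}(\mathcal{V})$. Your closing concern about naturality is handled in the paper simply by the naturality of the cited equivalences, so no separate functorial argument is needed.
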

\begin{proof}
  Since $X$ is an \igpd{}, the twisted arrow \icat{} $\Tw(X)$ is
  equivalent to $X$, and so \cite{freepres}*{Proposition 5.1} yields
  a natural equivalence
  \[ \Map_{\Fun(X, \mathcal{V})}(M \odot S, M) \simeq \lim_{x \in X}
    \Map_{\mathcal{V}}((M \odot S)(x), M(x)).\]
  Now the description of $M \odot S$ from
  Proposition~\ref{propn:freeopdalg} shows that this is naturally
  equivalent to 
\[    \lim_{x \in X} \Map_{\mathcal{V}}\left(\coprod_{n}
    \colim_{(x_{1},\ldots,x_{n}) \in X^{n}_{h
      \Sigma_{n}}}  M(x_{1}) \otimes
  \cdots \otimes M(x_{n}) \otimes S\tbinom{x_{1},\ldots,x_{n}}{x}, M(x)\right).\]
Taking the limit out and applying the universal property of
$\txt{MAP}$, this becomes
\[ \lim_{x \in X} \left( \prod_{n} \lim_{(x_{1},\ldots,x_{n}) \in
      X^{n}_{h \Sigma_{n}}}
    \Map_{\mathcal{V}}(S\tbinom{x_{1},\ldots,x_{n}}{x}, \txt{End}(M)\tbinom{x_{1},
    \ldots, x_{n}}{x})\right).\]
We can now combine the limits to get a limit over $\coprod_{n} X \times
X^{n}_{h\Sigma_{n}} \simeq \mathbb{F}^{\simeq}_{X}$, \ie{}
\[ \lim_{\xi \in \mathbb{F}^{\simeq}_{X}} \Map_{\mathcal{V}}(S(\xi),
  \End_{\mathcal{V}}(M)(\xi)).\]
Applying \cite{freepres}*{Proposition 5.1} once more now identifies this
limit (since
$\mathbb{F}^{\simeq}_{X}$ is again an \igpd{}) with
$\Map_{\Fun(\mathbb{F}^{\simeq}_{X}, \mathcal{V})}(S,
\End_{\mathcal{V}}(M))$, as required.
\end{proof}

\begin{cor}
  For any $M \colon X \to \mathcal{V}$, the \icat{}
  $\Coll_{X}(\mathcal{V})[M]$ has a terminal object.
\end{cor}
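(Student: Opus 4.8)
The plan is to show that the right fibration $\Coll_X(\mathcal{V})[M] \to \Coll_X(\mathcal{V})$ recalled above is \emph{representable}: its total space is then a slice \icat{}, which automatically has a terminal object.

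Concretely, by construction the forgetful functor $\Coll_X(\mathcal{V})[M] \to \Coll_X(\mathcal{V})$ is the right fibration classified by the presheaf
\[ S \longmapsto \Map_{\Fun(X, \mathcal{V})}(M \circ S, M), \]
where the right-tensor action $M \otimes S$ is the composition product $M \circ S$ of \cref{propn:degzero}(iii). But this is exactly the presheaf computed in \cref{propn:endrepr}: it is naturally equivalent to $\Map_{\Coll_X(\mathcal{V})}(-, \End_{\mathcal{V}}(M))$, hence represented by the object $\End_{\mathcal{V}}(M)$.

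It therefore remains to invoke the standard fact that a right fibration over an \icat{} $\mathcal{A}$ classified by a representable presheaf $\Map_{\mathcal{A}}(-, c)$ is equivalent over $\mathcal{A}$ to the slice $\mathcal{A}_{/c}$ — this is the (contravariant) Yoneda lemma, identifying $\Map_{\mathcal{A}}(-, c)$ with the unstraightening of the overcategory projection. Since $\mathcal{A}_{/c}$ has the terminal object $(c, \id_c)$, we conclude that $\Coll_X(\mathcal{V})[M]$ has a terminal object, given by $\End_{\mathcal{V}}(M)$ equipped with the counit $M \circ \End_{\mathcal{V}}(M) \to M$ that corresponds to $\id_{\End_{\mathcal{V}}(M)}$ under the equivalence of \cref{propn:endrepr}.

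There is no genuine difficulty here: the statement is a formal consequence of \cref{propn:endrepr} once the classifying presheaf of the right fibration is identified. The only point deserving care is bookkeeping — one must check that the equivalence of \cref{propn:endrepr} is natural as a transformation of presheaves on $\Coll_X(\mathcal{V})$, and not merely a pointwise equivalence of mapping spaces, so that it genuinely induces an equivalence of right fibrations and thus transports the terminal object to $\Coll_X(\mathcal{V})[M]$.
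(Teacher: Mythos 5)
Your proof is correct and is essentially the paper's own argument: the paper likewise uses \cref{propn:endrepr} to identify the right fibration $\Coll_{X}(\mathcal{V})[M] \to \Coll_{X}(\mathcal{V})$ as represented by $\End_{\mathcal{V}}(M)$, hence equivalent to the slice $\Coll_{X}(\mathcal{V})_{/\End_{\mathcal{V}}(M)}$, whose terminal object gives the claim. Your closing caveat about naturality is already built into the statement of \cref{propn:endrepr}, which asserts the equivalence naturally in $S$.
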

\begin{proof}
  By Proposition~\ref{propn:endrepr}, the functor
  $\Coll_{X}(\mathcal{V})^{\op} \to \mathcal{S}$ corresponding to
  the right fibration \[\Coll_{X}(\mathcal{V})[M] \to \Coll_{X}(\mathcal{V})\] is represented by the
  object $\End_{\mathcal{V}}(M)$. This implies that we have an equivalence
  \[ \Coll_{X}(\mathcal{V})[M] \simeq \Coll_{X}(\mathcal{V})_{/\End_{\mathcal{V}}(M)}.\]
  Since the right-hand side clearly has a terminal object, this
  completes the proof.
\end{proof}

Applying Proposition~\ref{propn:termendalg}, we get:
\begin{cor}\label{cor:endopd}
  For any $M \in \Fun(X, \mathcal{V})$ there exists an endomorphism
  \iopd{} $\End_{\mathcal{V}}(M)$ in $\Opd_{X}(\mathcal{V}) \simeq
  \Alg_{\Dop}(\Coll_{X}(\mathcal{V}))$ whose underlying $X$-collection is
  \[ \End_{\mathcal{V}}(M)\tbinom{x_{1},\ldots,x_{n}}{y} \simeq
    \txt{MAP}(M(x_{1})\otimes \cdots \otimes M(x_{n}), M(y)).\]
  This has the universal property that, for any $\mathcal{O} \in
  \Opd_{X}(\mathcal{V})$, morphisms $\mathcal{O} \to
  \End_{\mathcal{V}}(M)$ in $\Opd_{X}(\mathcal{V})$
  correspond to $\mathcal{O}$-algebra structures on $M$, \ie{} there
  is natural equivalence
  \[ \Map_{\Opd_{X}(\mathcal{V})}(\mathcal{O}, \End_{\mathcal{V}}(M))
    \simeq \Alg_{\mathcal{O}}(\mathcal{V})_{M}^{\simeq},\]
  where the right-hand side denotes the underlying \igpd{} of the
  fibre of $\Alg_{\mathcal{O}}(\mathcal{V}) \to \Fun(X, \mathcal{V})$
  at $M$.
\end{cor}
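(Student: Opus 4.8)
The plan is to feed the preceding corollary into Lurie's theory of endomorphism algebras. Take $\mathcal{A} = \Coll_{X}(\mathcal{V})$ with the monoidal structure given by the composition product, and let $\mathcal{M} = \Fun(X, \mathcal{V})$ be right-tensored over $\mathcal{A}$ as in \cref{propn:degzero}(iii). By the preceding corollary the \icat{} $\Coll_{X}(\mathcal{V})[M]$ has a terminal object, whose image under the right fibration $\Coll_{X}(\mathcal{V})[M] \to \Coll_{X}(\mathcal{V})$ is the representing object $\End_{\mathcal{V}}(M)$ of \cref{propn:endrepr}; this already identifies the underlying $X$-collection with the displayed internal-Hom formula. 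Proposition~\ref{propn:termendalg} then equips $\End_{\mathcal{V}}(M)$ with a canonical associative algebra structure in $\Coll_{X}(\mathcal{V})$ exhibiting it as an endomorphism algebra for $M$.

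Next I would pass to operadic language. An associative algebra in $\Coll_{X}(\mathcal{V})$ is precisely an object of $\Alg_{\Dop}(\Coll_{X}(\mathcal{V})) \simeq \Opd_{X}(\mathcal{V})$, so the algebra structure just produced exhibits $\End_{\mathcal{V}}(M)$ as a $\mathcal{V}$-\iopd{} with space of objects $X$, with the asserted underlying collection. For the universal property I would unwind the defining property of an endomorphism algebra recalled above: for any associative algebra $\mathcal{O}$ in $\Coll_{X}(\mathcal{V})$ it supplies a natural equivalence between the space of algebra maps $\mathcal{O} \to \End_{\mathcal{V}}(M)$ and the space of right $\mathcal{O}$-module structures on $M$. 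Under $\Alg_{\Dop}(\Coll_{X}(\mathcal{V})) \simeq \Opd_{X}(\mathcal{V})$ the former becomes $\Map_{\Opd_{X}(\mathcal{V})}(\mathcal{O}, \End_{\mathcal{V}}(M))$, while by our definition of $\mathcal{O}$-algebras as right $\mathcal{O}$-modules in $\Fun(X, \mathcal{V})$ the latter is exactly the space of $\mathcal{O}$-algebra structures on the fixed underlying object $M$, that is, $\Alg_{\mathcal{O}}(\mathcal{V})_{M}^{\simeq}$.

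Since the substantive computation was already carried out in \cref{propn:endrepr}, what remains is bookkeeping, and I expect no serious obstacle. The two points deserving a word of care are: first, that transporting the terminal object's algebra structure across the equivalence $\Opd_{X}(\mathcal{V}) \simeq \Alg_{\Dop}(\Coll_{X}(\mathcal{V}))$ preserves the underlying collection (immediate, since that equivalence is compatible with the forgetful functors to $\Coll_{X}(\mathcal{V})$); and second, that Lurie's space of right $\mathcal{O}$-module structures on $M$ agrees on the nose with the fiber $\Alg_{\mathcal{O}}(\mathcal{V})_{M}^{\simeq}$ of $\RMod_{\mathcal{O}}(\Fun(X,\mathcal{V})) \to \Fun(X,\mathcal{V})$ over $M$. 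Naturality in $\mathcal{O}$ is automatic, since the endomorphism-algebra universal property is already an equivalence of functors $\Alg_{\Dop}(\Coll_{X}(\mathcal{V}))^{\op} \to \mathcal{S}$, not merely a pointwise statement, so it promotes directly to the claimed natural equivalence.
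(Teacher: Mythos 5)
Your proposal is correct and follows essentially the same route as the paper, which likewise obtains the corollary by combining the terminal object of $\Coll_{X}(\mathcal{V})[M]$ (identified via \cref{propn:endrepr}) with Proposition~\ref{propn:termendalg} and the identification $\Opd_{X}(\mathcal{V}) \simeq \Alg_{\Dop}(\Coll_{X}(\mathcal{V}))$, so that the universal property of the endomorphism algebra translates directly into the stated equivalence with $\Alg_{\mathcal{O}}(\mathcal{V})_{M}^{\simeq}$. Your two bookkeeping caveats (compatibility with the forgetful functors, and matching Lurie's space of right-module structures with the fiber of $\RMod_{\mathcal{O}}(\Fun(X,\mathcal{V})) \to \Fun(X,\mathcal{V})$ over $M$) are exactly the points the paper leaves implicit, and both hold as you indicate.
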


\begin{remark}
  For $X \simeq *$, so that the functor $* \to \mathcal{V}$ picks out
  an object $v$ of $\mathcal{V}$, we get an \icatl{} analogue of the classical
  endomorphism operad: $\End_{\mathcal{V}}(v)$
  is a one-object $\mathcal{V}$-\iopd{} with underlying symmetric
  sequence
 \[\End_{\mathcal{V}}(v)(n) \simeq
   \MAP_{\mathcal{V}}(v^{\otimes n}, v).\]
 If $\mathcal{O}$ is a one-object $\mathcal{V}$-\iopd{}, the universal
 property says that an
 $\mathcal{O}$-algebra structure on $v$ is equivalent to a morphism of
 one-object \iopds{} $\mathcal{O} \to \End_{\mathcal{V}}(v)$.
\end{remark}

\begin{ex}\label{nullend}
  By \cref{nullismod}, if $\mathcal{O}$ is any $\mathcal{V}$-\iopd{}
  with space of objects $X$, then the functor $Z(\mathcal{O}) \colon X
  \to \mathcal{V}$ picking out the nullary operations is canonically a
  right $\mathcal{O}$-module. This corresponds to a canonical morphism
  of $\mathcal{V}$-\iopds{} $\mathcal{O} \to \End(Z(\mathcal{O}))$,
  given by maps
  \[ \mathcal{O}\tbinom{x_{1},\ldots,x_{n}}{y} \to
    \MAP_{\mathcal{V}}(Z(\mathcal{O})(x_{1}) \otimes \cdots \otimes
    Z(\mathcal{O})(x_{n}), Z(\mathcal{O})(y)),\]
  adjoint to the composition maps
  \[ \mathcal{O}\tbinom{}{x_{1}} \otimes \cdots \otimes
    \mathcal{O}\tbinom{}{x_{n}} \otimes
    \mathcal{O}\tbinom{x_{1},\ldots,x_{n}}{y} \to
    \mathcal{O}\tbinom{}{y} \]
  for $\mathcal{O}$.
\end{ex}

We now observe that the endomorphism algebras are compatible with the
lax monoidal functors $f^{*} \colon \Coll_{Y}(\mathcal{V}) \to
\Coll_{X}(\mathcal{V})$ induced by morphisms of \igpds{}
$f \colon X \to Y$:
\begin{propn}
  For $f \colon X \to Y$ a morphism in $\mathcal{S}$ and $M \colon Y
  \to \mathcal{V}$, there is a natural equivalence of $\RM$-algebras
  \[f^{*}(M, \End_{\mathcal{V}}(M))
  \isoto (f^{*}M, \End_{\mathcal{V}}(f^{*}M)).\]
\end{propn}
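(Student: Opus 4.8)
The plan is to produce the comparison map from the universal property of the endomorphism \iopd{} and then check it is an equivalence levelwise. First I would note that by \cref{propn:degzero}(iv) the functor $f^{*}$ is lax $\RM$-monoidal, so it carries the $\RM$-algebra $(M, \End_{\mathcal{V}}(M))$ to an $\RM$-algebra whose module component is $f^{*}M$ and whose algebra component is $f^{*}\End_{\mathcal{V}}(M)$; in particular $f^{*}M$ acquires a right $f^{*}\End_{\mathcal{V}}(M)$-module structure. By the universal property of the endomorphism algebra $\End_{\mathcal{V}}(f^{*}M)$ (from \cref{propn:endrepr} together with \cref{propn:termendalg}) this module structure is classified by a canonical morphism of associative algebras $\alpha \colon f^{*}\End_{\mathcal{V}}(M) \to \End_{\mathcal{V}}(f^{*}M)$ in $\Coll_{X}(\mathcal{V})$, which together with the identity on $f^{*}M$ assembles into the asserted morphism of $\RM$-algebras.

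Since this morphism is the identity on the module component $f^{*}M$, it is an equivalence of $\RM$-algebras exactly when $\alpha$ is an equivalence in $\Coll_{X}(\mathcal{V}) = \Fun(\xFeq_{X}, \mathcal{V})$, which can be checked pointwise. Here the two underlying collections are \emph{manifestly} identified: since $f^{*}$ is restriction along the induced map $\xFeq_{X} \to \xFeq_{Y}$ and $(f^{*}M)(x) \simeq M(f(x))$, both $f^{*}\End_{\mathcal{V}}(M)\tbinom{x_{1},\ldots,x_{n}}{x}$ and $\End_{\mathcal{V}}(f^{*}M)\tbinom{x_{1},\ldots,x_{n}}{x}$ evaluate to $\MAP_{\mathcal{V}}(M(f(x_{1})) \otimes \cdots \otimes M(f(x_{n})), M(f(x)))$. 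Thus the two objects are pinned down, but this computation alone does not yet control the map $\alpha$ between them.

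The hard part will therefore be confirming that $\alpha$ realizes this tautological identification, rather than merely being \emph{some} self-map of a pointwise-fixed collection. I would handle this using \cref{propn:endrepr} and the adjunction $f_{!} \dashv f^{*}$: testing against $T \in \Coll_{X}(\mathcal{V})$, the map $\alpha$ corresponds to a natural transformation $\Map_{\Fun(Y, \mathcal{V})}(M \circ f_{!}T, M) \to \Map_{\Fun(X, \mathcal{V})}(f^{*}M \circ T, f^{*}M)$. The key input is a projection formula $f_{!}(f^{*}M \circ T) \simeq M \circ f_{!}T$, which I would establish directly from the colimit formula of \cref{sseqthm}(iii), using that the fibre of $\xFeq_{X} \to \xFeq_{Y}$ over $\tbinom{y_{1},\ldots,y_{n}}{y}$ is $\prod_{i} X_{y_{i}} \times X_{y}$ and that tensor products and coproducts commute with the relevant colimits. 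Granting the projection formula, the displayed natural transformation becomes an equivalence, and Yoneda then shows $\alpha$ is an equivalence; tracing through the chain of equivalences identifies it with the evident one above. I expect the bookkeeping in matching this chain of equivalences with the lax structure maps of $f^{*}$ to be the most delicate point, though the pointwise computation already guarantees the final answer, so the remaining work lies entirely in confirming the map is the obvious identification.
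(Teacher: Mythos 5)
Your proposal is correct, and its first half is exactly the paper's argument: the paper also constructs the comparison map by applying the lax $\RM$-monoidal functor $f^{*}$ (via \cref{propn:degzero}(iv)) to the canonical $\End_{\mathcal{V}}(M)$-algebra structure on $M$, obtaining a right $f^{*}\End_{\mathcal{V}}(M)$-module structure on $f^{*}M$, and then invoking the universal property of $\End_{\mathcal{V}}(f^{*}M)$ to classify it by a map $\alpha \colon f^{*}\End_{\mathcal{V}}(M) \to \End_{\mathcal{V}}(f^{*}M)$. Where you diverge is in verifying that $\alpha$ is an equivalence. The paper disposes of this in one line: ``using the explicit description of the underlying collection of $\End_{\mathcal{V}}(M)$ in terms of internal Homs we see that this is an equivalence'' --- implicitly, one unwinds \cref{propn:endrepr} to see that $\alpha$ is pointwise the currying of $f^{*}$ applied to the evaluation map, precomposed with the lax structure map, which on each $\tbinom{x_{1},\ldots,x_{n}}{x}$ is the tautological identification of $\MAP_{\mathcal{V}}(M(fx_{1}) \otimes \cdots \otimes M(fx_{n}), M(fx))$ with itself. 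You rightly flag that pointwise agreement of the two \emph{objects} does not by itself make the \emph{map} an equivalence (your closing sentence, claiming the pointwise computation ``already guarantees the final answer,'' slightly contradicts this correct earlier observation), and you instead propose a representability argument: a projection formula $f_{!}(f^{*}M \circ T) \simeq M \circ f_{!}T$, which does hold for arbitrary $f$ --- the fibre identification $\prod_{i} X_{y_{i}} \times X_{y}$ from the paper's monomorphism proposition needs no mono hypothesis, and no diagonal appears since only one factor is pushed forward --- followed by Yoneda. This buys a more robust and checkable verification (everything reduces to the colimit formula of \cref{sseqthm}(iii) plus adjunctions), at the cost of the compatibility bookkeeping you flag, namely that the chain of equivalences is the transformation induced by $\alpha$; that check is essentially the same homotopy-coherent unwinding the paper's one-line verification hides, so the two routes are comparable in what remains implicit, with yours more explicit about where the content lies.
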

\begin{proof}
  If $\mathcal{O}$ is a $\mathcal{V}$-\iopd{} with $Y$ as space of
  objects, the lax monoidal functor $f^{*}$ induces a a natural
  functor
  \[ \Alg_{\mathcal{O}}(\mathcal{V}) \to
    \Alg_{f^{*}\mathcal{O}}(\mathcal{V}),\] given on the underlying
  functors to $\mathcal{V}$ by composition with $f$. Applying this to
  the $\mathcal{V}$-\iopd{} $\End_{\mathcal{V}}(M)$ and the canonical
  $\End_{\mathcal{V}}(M)$-algebra structure on $M$, we obtain an
  $f^{*}\End_{\mathcal{V}}(M)$-algebra structure on
  $f^{*}M = M \circ f$. By the universal property of endomorphism
  \iopds{} this corresponds to a morphism of \iopds{}
  $f^{*}\End_{\mathcal{V}}(M) \to \End_{\mathcal{V}}(f^{*}M)$ . Using
  the explicit description of the underlying collection of
  $\End_{\mathcal{V}}(M)$ in terms of internal Homs we see that this
  is an equivalence.
\end{proof}

There exists a universal functor from an \igpd{} to $\mathcal{V}$,
namely the inclusion $\mathcal{V}^{\simeq} \to \mathcal{V}$ of the
underlying \igpd{} of $\mathcal{V}$. Our construction does not apply
directly to this, since the \igpd{} $\mathcal{V}^{\simeq}$ is not
small. However, by passing to a larger universe we can define a
universal endomorphism \iopd{} for $\mathcal{V}$:
\begin{defn}
  Let $\mathcal{V}$ be a large closed symmetric monoidal \icat{}
  compatible with colimits indexed by small \igpds{}. By
  \cite{HA}*{Proposition 4.8.1.10} there is a very large presentable
  \icat{} $\widehat{\mathcal{V}}$ compatible with large colimits, with
  a fully faithful symmetric monoidal functor
  $\mathcal{V} \hookrightarrow \widehat{\mathcal{V}}$ that preserves
  colimits over small \igpds{}. Let $\Opd(\widehat{\mathcal{V}})$ be
  the \icat{} of $\widehat{\mathcal{V}}$-enriched \iopds{} with
  potentially large spaces of objects.
\end{defn}

\begin{remark}
  Since $\mathcal{V}$ is a symmetric monoidal full subcategory of
  $\widehat{\mathcal{V}}$, we can regard $\Opd(\mathcal{V})$ as a full
  subcategory of $\Opd(\widehat{\mathcal{V}})$, containing precisely
  those $\widehat{\mathcal{V}}$-\iopds{} whose spaces of objects are
  small and whose objects of
  multimorphisms all lie in the full subcategory
  $\mathcal{V}$. Similarly, for any $\mathcal{O} \in
  \Opd_{X}(\mathcal{V})$  we have a pullback square
  \[
    \begin{tikzcd}
     \Alg_{\mathcal{O}}(\mathcal{V}) \ar[r,hookrightarrow] \ar[d] & \Alg_{\mathcal{O}}(\widehat{\mathcal{V}})  \ar[d] \\
     \Fun(X,\mathcal{V}) \ar[r,hookrightarrow] & \Fun(X,\widehat{\mathcal{V}})
    \end{tikzcd}
  \]
  where the horizontal maps are fully faithful.
\end{remark}

\begin{defn}
  Let $i \colon \mathcal{V}^{\simeq} \to \widehat{\mathcal{V}}$ denote
  the inclusion of the space of objects in the full subcategory
  $\mathcal{V}$. Applying \cref{cor:endopd} in the enlarged universe
  to $i$, we get an endomorphism $\widehat{\mathcal{V}}$-\iopd{}
  \[ \overline{\mathcal{V}} := \End_{\widehat{\mathcal{V}}}(i).\]
  The formula for its multimorphism objects implies that we can regard
  $\overline{\mathcal{V}}$ as a (large) $\mathcal{V}$-\iopd{} (\ie{} an object
  in the \icat{} $\Opd_{\mathcal{V}^{\simeq}}(\mathcal{V}) \subseteq
  \Opd_{\mathcal{V}^{\simeq}}(\widehat{\mathcal{V}})$, which makes
  sense also when the \igpd{} of objects is large). Moreover, for any
  map of \igpds{} $M \colon X \to \mathcal{V}^{\simeq}$ where $X$ is
  small, we can regard
  \[ \End_{\widehat{\mathcal{V}}}(i\circ M) \simeq M^{*}\overline{\mathcal{V}}\]
  as an object of $\Opd_{X}(\mathcal{V})$.
\end{defn}

\begin{lemma}
  For any map $M \colon X \to \mathcal{V}^{\simeq}$ where $X$ is a
  small \igpd{}, we have a canonical equivalence
  \[ M^{*}\overline{\mathcal{V}} \simeq \End_{\mathcal{V}}(M).\]
\end{lemma}
\begin{proof}
  For $\mathcal{O} \in \Opd_{X}(\mathcal{V})$ we have a natural
  equivalence
  \[ \Alg_{\mathcal{O}}(\mathcal{V})_{M} \isoto \Alg_{\mathcal{O}}(\widehat{\mathcal{V}})_{iM}.\]
  Using the universal property of the endomorphism objects this
  corresponds to a natural equivalence
  \[ \Map(\mathcal{O}, M^{*}\overline{\mathcal{V}}) \isoto
    \Map(\mathcal{O}, \End_{\mathcal{V}}(M)),\] and so an equivalence
  $M^{*}\overline{\mathcal{V}} \isoto \End_{\mathcal{V}}(M)$, as
  required.
\end{proof}

Let $U$ denote the canonical $\overline{\mathcal{V}}$-algebra
structure on $i$, which we can regard as an object of
$\Alg_{\overline{\mathcal{V}}}(\mathcal{V})$. For every map
$M \colon X \to \mathcal{V}^{\simeq}$  with $X$ small, the pullback
$M^{*}U$ is then
the canonical $\End_{\mathcal{V}}(M)$-algebra structure on $M$, which
leads to the following:
\begin{thm}\label{thm:oVmapeq}
  For any small $\mathcal{V}$-\iopd{}
  $\mathcal{O}$, the morphism of \igpds{}
  \[\Map_{\Opd(\widehat{\mathcal{V}})}(\mathcal{O}, \overline{\mathcal{V}}) \to
    \Alg_{\mathcal{O}}(\mathcal{V})^{\simeq},\]
  which takes $\phi \colon \mathcal{O} \to \overline{\mathcal{V}}$ to
  $\phi^{*}U \in \Alg_{\mathcal{O}}(\mathcal{V})^{\simeq}$, is an
  equivalence.
\end{thm}
\begin{proof}
  Let $X$ be the space of objects of $\mathcal{O}$. Then we have a
  commutative triangle of \igpds{}
  \[
    \begin{tikzcd}
    \Map_{\Opd(\widehat{\mathcal{V}})}(\mathcal{O}, \overline{\mathcal{V}})
    \arrow{rr} \arrow{dr} & & \Alg_{\mathcal{O}}(\mathcal{V})^{\simeq}
    \arrow{dl} 
    \\
    & \Map(X, \mathcal{V}).
  \end{tikzcd}
\]
  It suffices to show that we have an equivalence on the fibres over
  each map $M \colon X \to \mathcal{V}$. But we have an equivalence
  between $\Map_{\Opd(\widehat{\mathcal{V}})}(\mathcal{O},
  \overline{\mathcal{V}})_{M}$ and
  \[\Map_{\Opd_{X}(\mathcal{V})}(\mathcal{O},
    M^{*}\overline{\mathcal{V}}) \simeq
    \Map_{\Opd_{X}(\mathcal{V})}(\mathcal{O}, \End_{\mathcal{V}}(M)),\]
  under which the map to
  $\Alg_{\mathcal{O}}(\mathcal{V})^{\simeq}_{M}$ is equivalent to that
  taking $\phi \colon \mathcal{O} \to \End_{\mathcal{V}}(M)$ to
  $\phi^{*}$ applied to the canonical $\End_{\mathcal{V}}(M)$-algebra
  structure on $M$. This is an equivalence by the universal property
  of the endomorphism algebra.
\end{proof}

\begin{remark}
  In \cite{enropd} we constructed a natural tensoring of
  $\mathcal{V}$-\iopds{} over \icats{}. This induces an enrichment in
  \icats{}, given by
  \[ \Map_{\CatI}(\mathcal{C}, \Alg_{\mathcal{O}}(\mathcal{P})) \simeq
    \Map_{\OpdI^{\mathcal{V}}}(\mathcal{C} \otimes \mathcal{O},
    \mathcal{P}).\] For $\mathcal{P} = \overline{\mathcal{V}}$, we can
  use \cref{thm:oVmapeq} to identify the \icat{}
  $\Alg_{\mathcal{O}}(\overline{\mathcal{V}})$ with the Segal space
  $\Alg_{\Delta^{\bullet} \otimes
    \mathcal{O}}(\mathcal{V})^{\simeq}$. We expect that this should in
  fact be equivalent to the \icat{} $\Alg_{\mathcal{O}}(\mathcal{V})$,
  but to prove we need a better understanding of the tensoring of
  $\mathcal{V}$-\iopds{} and \icats{}. As this was defined rather
  inexplicitly in \cite{enropd}, we suspect that this requires setting
  up a new definition of enriched \iopds{} where the tensoring can be
  described more concretely.
\end{remark}

In the case where $\mathcal{V}$ is the \icat{} $\mathcal{S}$ of
spaces, we can identify $\overline{\mathcal{S}}$ explicitly:
\begin{propn}
  Let $\mathcal{S}^{\times}$ denote the symmetric monoidal \icat{}
  given by the cartesian product in
  $\mathcal{S}$, viewed as an $\mathcal{S}$-enriched \iopd{}.
  There is an equivalence $\mathcal{S}^{\times} \isoto
  \overline{\mathcal{S}}$.
\end{propn}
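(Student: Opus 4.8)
The plan is to realize the \iopd{} underlying $\mathcal{S}^{\times}$ directly as the assembly of endomorphism \iopds{} that defines $\overline{\mathcal{S}}$, exploiting that the cartesian structure on $\mathcal{S}$ is closed with internal Hom the mapping space. First I would record the underlying collections. Since $\mathcal{S}$ is cartesian closed, its internal Hom satisfies $\Map_{\mathcal{S}}(Z, \MAP_{\mathcal{S}}(X,Y)) \simeq \Map_{\mathcal{S}}(Z \times X, Y)$, and taking $Z = \ast$ (the monoidal unit) yields $\MAP_{\mathcal{S}}(X,Y) \simeq \Map_{\mathcal{S}}(X,Y)$. Hence for any $M \colon X \to \mathcal{S}^{\simeq}$ the collection of the endomorphism \iopd{} is
\[ \End_{\mathcal{S}}(M)\tbinom{x_{1},\ldots,x_{n}}{y} \simeq \MAP_{\mathcal{S}}\bigl({\textstyle\prod_{i}} M(x_{i}), M(y)\bigr) \simeq \Map_{\mathcal{S}}\bigl({\textstyle\prod_{i}} M(x_{i}), M(y)\bigr), \]
which is exactly the multimorphism space of the \iopd{} underlying $\mathcal{S}^{\times}$ at $(M(x_{1}),\ldots,M(x_{n}); M(y))$. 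Thus the restriction $M^{*}\mathcal{S}^{\times}$ and $\End_{\mathcal{S}}(M)$ have the same underlying $X$-collection.

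The heart of the argument is to upgrade this identification of collections to an equivalence of \iopds{}, which I would do by exhibiting $\mathcal{S}^{\times}$ as an endomorphism \iopd{} through its universal property. The inclusion $i \colon \mathcal{S}^{\simeq} \to \mathcal{S}$ carries a tautological right $\mathcal{S}^{\times}$-module structure whose structure map is assembled from the evaluation maps
\[ {\textstyle\prod_{i}} A_{i} \times \Map_{\mathcal{S}}\bigl({\textstyle\prod_{i}} A_{i}, B\bigr) \to B; \]
in the notation of \cref{propn:freeopdalg} this is a morphism $i \circ \mathcal{S}^{\times} \to i$, i.e.\ an $\mathcal{S}^{\times}$-algebra structure on $i$ in the sense of this paper. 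Since $\mathcal{S}^{\simeq}$ is not small I would work one cardinal at a time: restricting along $i_{\kappa} \colon \mathcal{S}^{\kappa,\simeq} \to \mathcal{S}$ gives an $i_{\kappa}^{*}\mathcal{S}^{\times}$-algebra structure on $i_{\kappa}$, and by the universal property of the endomorphism \iopd{} $\End_{\mathcal{S}}(i_{\kappa}) = \overline{\mathcal{S}}^{\kappa}$ this corresponds to a map of $\mathcal{S}$-\iopds{}
\[ \phi_{\kappa} \colon i_{\kappa}^{*}\mathcal{S}^{\times} \to \overline{\mathcal{S}}^{\kappa}. \]
By \cref{propn:endrepr} the map $\phi_{\kappa}$ is given on collections by the adjoint of the evaluation action, which is the identity of $\Map_{\mathcal{S}}(\prod_{i} A_{i}, B)$; hence $\phi_{\kappa}$ is an equivalence. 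These module structures are compatible under restriction, so the $\phi_{\kappa}$ assemble to a natural equivalence of filtered diagrams.

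Finally I would assemble over $\kappa$. By construction $\overline{\mathcal{S}}$ is the filtered colimit $\colim_{\kappa} i_{\kappa,!}\overline{\mathcal{S}}^{\kappa}$ in $\Opd_{\mathcal{S}^{\simeq}}(\widehat{\mathcal{S}})$, so applying $i_{\kappa,!}$ to $\phi_{\kappa}$ and passing to the colimit identifies $\overline{\mathcal{S}}$ with $\colim_{\kappa} i_{\kappa,!}i_{\kappa}^{*}\mathcal{S}^{\times}$. It then remains to check that the counit $\colim_{\kappa} i_{\kappa,!}i_{\kappa}^{*}\mathcal{S}^{\times} \to \mathcal{S}^{\times}$ is an equivalence, which I expect to be the main obstacle: it is the \iopd{}-level (not merely collection-level) statement that $\mathcal{S}^{\times}$ is recovered as the colimit of its restrictions to $\kappa$-compact objects. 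On object spaces this is the exhaustion $\mathcal{S}^{\simeq} \simeq \colim_{\kappa}\mathcal{S}^{\kappa,\simeq}$, and on collections it follows because any fixed tuple of objects is $\kappa$-compact for $\kappa$ large enough, so the relevant colimit is eventually constant. The argument here is the same cofinality/exhaustion argument used in the proof that $\overline{\mathcal{S}}^{\kappa} \simeq i_{\kappa}^{*}\overline{\mathcal{S}}$, now applied to $\mathcal{S}^{\times}$, together with \cref{cor:i!ff} to see that the transition maps in the diagram are fully faithful. Combining these steps gives $\mathcal{S}^{\times} \isoto \overline{\mathcal{S}}$; alternatively one could phrase the whole argument as a Yoneda comparison of the functors corepresented by $\mathcal{S}^{\times}$ and $\overline{\mathcal{S}}$ on small \iopds{}, but the direct construction above seems cleaner and avoids density bookkeeping for large \iopds{}.
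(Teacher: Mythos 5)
Your proof is correct and is essentially the paper's own argument: the tautological evaluation action you put on $i_{\kappa}$ is exactly the canonical right-module structure on the nullary operations $Z(\mathcal{S}^{\kappa,\times}) \simeq i_{\kappa}$ from \cref{nullend}, producing the same canonical map $\mathcal{S}^{\kappa,\times} \to \overline{\mathcal{S}}^{\kappa}$, which both arguments then verify to be an equivalence on underlying collections before passing to the colimit over $\kappa$. The only difference is one of detail: you spell out the exhaustion/counit step identifying $\mathcal{S}^{\times}$ with the colimit of its restrictions to $\kappa$-compact objects, which the paper's proof leaves implicit in the phrase ``taking a colimit over $\kappa$.''
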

\begin{proof}
  For $X \in \mathcal{S}$, we have
  $Z(\mathcal{S}^{\times})(X) \simeq \Map_{\mathcal{S}}(*, X) \simeq
  X$, and the functor
  $Z(\mathcal{S}^{\times}) \colon \mathcal{S}^{\simeq} \to
  \mathcal{S}$ is the inclusion of the underlying \igpd{}.
  Hence, thinking of $\overline{\mathcal{S}}$ as an endomorphism
  object for \iopds{} enriched in large spaces, by
  \cref{nullend} there is a
  canonical morphism
  $\mathcal{S}^{\times} \to \overline{\mathcal{S}}$. This
  is given by equivalences
  \[ \mathcal{S}^{\times}\tbinom{X_{1},\ldots,X_{n}}{Y} \isoto
    \Map_{\mathcal{S}}(X_{1}\times \cdots \times X_{n}, Y),\]
  and so it is an equivalence of $\mathcal{S}$-\iopds{}.
\end{proof}
\begin{remark}
  It follows that for $\mathcal{O}$ an $\mathcal{S}$-\iopd{}, the
  \igpd{} $\Alg_{\mathcal{O}}(\mathcal{S})^{\simeq}$ in our sense is
  equivalent to $\Map_{\Opd(\widehat{\mathcal{S}})}(\mathcal{O},
  \mathcal{S}^{\times})$. This is the underlying \igpd{} of the
  \icat{} of $\mathcal{O}$-algebras in $\mathcal{S}$ defined in
  \cite{HA}, so for $\mathcal{S}$-enriched \iopds{} our notion of
  $\mathcal{O}$-algebras agrees with that of \cite{HA}, at least on
  the level of \igpds{}.
\end{remark}

\begin{bibdiv}
  \begin{biblist}
    \bib{AyalaFrancisFlagged}{article}{
  author={Ayala, David},
  author={Francis, John},
  title={Flagged higher categories},
  conference={ title={Topology and quantum theory in interaction}, },
  book={ series={Contemp. Math.}, volume={718}, publisher={Amer. Math. Soc., Providence, RI}, },
  date={2018},
  pages={137--173},
  eprint={arXiv:1801.08973},
}

\bib{BarwickOpCat}{article}{
  author={Barwick, Clark},
  title={From operator categories to higher operads},
  journal={Geom. Topol.},
  volume={22},
  date={2018},
  number={4},
  pages={1893--1959},
  eprint={arXiv:1302.5756},
}

\bib{BrantnerThesis}{article}{
  author={Brantner, Lukas},
  title={The {L}ubin--{T}ate theory of spectral {L}ie algebras},
  date={2017},
  note={Available from \url {https://people.maths.ox.ac.uk/brantner/brantnerthesis.pdf}.},
}

\bib{enropd}{article}{
  author={Chu, Hongyi},
  author={Haugseng, Rune},
  title={Enriched $\infty $-operads},
  eprint={arXiv:1707.08049},
  journal={Adv. Math.},
  volume={361},
  date={2020},
  pages={106913, 85},
}

\bib{CisinskiBook}{book}{
  author={Cisinski, Denis-Charles},
  title={Higher categories and homotopical algebra},
  series={Cambridge Studies in Advanced Mathematics},
  volume={180},
  publisher={Cambridge University Press, Cambridge},
  date={2019},
  pages={xviii+430},
  isbn={978-1-108-47320-0},
  review={\MR {3931682}},
  doi={10.1017/9781108588737},
  note={Available from \url {https://cisinski.app.uni-regensburg.de/CatLR.pdf}},
}

\bib{enr}{article}{
  author={Gepner, David},
  author={Haugseng, Rune},
  title={Enriched $\infty $-categories via non-symmetric $\infty $-operads},
  journal={Adv. Math.},
  volume={279},
  pages={575--716},
  eprint={arXiv:1312.3178},
  date={2015},
}

\bib{freepres}{article}{
  author={Gepner, David},
  author={Haugseng, Rune},
  author={Nikolaus, Thomas},
  title={Lax colimits and free fibrations in $\infty $-categories},
  eprint={arXiv:1501.02161},
  journal={Doc. Math.},
  volume={22},
  date={2017},
  pages={1225--1266},
}

\bib{symmseq}{article}{
  author={Haugseng, Rune},
  title={$\infty $-Operads via symmetric sequences},
  eprint={arXiv:1708.09632},
  journal={Math. Z.},
  volume={301},
  date={2022},
  number={1},
  pages={115--171},
}

\bib{HeutsMTh}{article}{
  author={Heuts, Gijs},
  title={Algebras over $\infty $-operads},
  eprint={arXiv:1110.1776},
  date={2011},
}

\bib{HinichYoneda}{article}{
  eprint={arXiv:1805.07635},
  author={Hinich, Vladimir},
  title={Yoneda lemma for enriched $\infty $-categories},
  journal={Adv. Math.},
  volume={367},
  date={2020},
  pages={107129},
}

\bib{HTT}{book}{
  author={Lurie, Jacob},
  title={Higher Topos Theory},
  series={Annals of Mathematics Studies},
  publisher={Princeton University Press},
  address={Princeton, NJ},
  date={2009},
  volume={170},
  note={Available from \url {http://math.ias.edu/~lurie/}},
}

\bib{HA}{book}{
  author={Lurie, Jacob},
  title={Higher Algebra},
  date={2017},
  note={Available at \url {http://math.ias.edu/~lurie/}.},
}

\bib{MazelGeeQAdj}{article}{
  author={Mazel-Gee, Aaron},
  title={Quillen adjunctions induce adjunctions of quasicategories},
  journal={New York J. Math.},
  volume={22},
  date={2016},
  pages={57--93},
  eprint={arXiv:1501.03146},
}

\bib{PavlovScholbachOpd}{article}{
  eprint={arXiv:1410.5675},
  author={Pavlov, Dmitri},
  author={Scholbach, Jakob},
  title={Admissibility and rectification of colored symmetric operads},
  journal={J. Topol.},
  volume={11},
  date={2018},
  number={3},
  pages={559--601},
}

\bib{PavlovScholbachSymm}{article}{
  author={Pavlov, Dmitri},
  author={Scholbach, Jakob},
  title={Homotopy theory of symmetric powers},
  journal={Homology Homotopy Appl.},
  volume={20},
  date={2018},
  number={1},
  pages={359--397},
  eprint={arXiv:1510.04969},
}

\bib{SchwedeSymmSp}{book}{
  author={Schwede, Stefan},
  title={An untitled book project about symmetric spectra},
  date={2007},
  note={Available from \url {http://www.math.uni-bonn.de/~schwede/}},
}

\bib{ShipleyConvenient}{article}{
  author={Shipley, Brooke},
  title={A convenient model category for commutative ring spectra},
  conference={ title={Homotopy theory: relations with algebraic geometry, group cohomology, and algebraic $K$-theory}, },
  book={ series={Contemp. Math.}, volume={346}, publisher={Amer. Math. Soc., Providence, RI}, },
  date={2004},
  pages={473--483},
}

\bib{SpitzweckOpd}{article}{
  author={Spitzweck, Markus},
  title={Operads, algebras and modules in general model categories},
  date={2001},
  eprint={arXiv:math/0101102},
}

\bib{TrimbleLie}{article}{
  author={Trimble, Todd H.},
  title={Notes on the {L}ie operad},
  note={Available from \url {http://math.ucr.edu/home/baez/trimble/}.},
}

\bib{WhiteYauOpd}{article}{
  author={White, David},
  author={Yau, Donald},
  title={Bousfield localization and algebras over colored operads},
  journal={Appl. Categ. Structures},
  volume={26},
  date={2018},
  number={1},
  pages={153--203},
}

\bib{WhiteYauSmith}{article}{
  author={White, David},
  author={Yau, Donald},
  title={Smith ideals of operadic algebras in monoidal model categories},
  journal={Algebr. Geom. Topol.},
  volume={24},
  date={2024},
  number={1},
  pages={341--392},
}
\end{biblist}
\end{bibdiv}

\end{document}